\newtheorem{theorem}{Theorem}[section]
\newtheorem{question}[theorem]{Open Question}
\newtheorem{lemma}[theorem]{Lemma}
\newtheorem{proposition}[theorem]{Proposition}
\newtheorem{corollary}[theorem]{Corollary}
\theoremstyle{definition}
\newtheorem{definition}[theorem]{Definition}
\newtheorem{example}[theorem]{Example}
\newtheorem{remark}[theorem]{Remark}
\newcommand{\Hess}{\mathcal{H}}
\newcommand{\G}{GL(n, \mathbb C)}
\title{Which Hessenberg varieties are GKM?}
\author{Rebecca Goldin}
\author{Julianna Tymoczko}
\begin{document}

\maketitle

\begin{abstract}
Hessenberg varieties $\Hess(X,H)$ form a  class of subvarieties of the flag variety $G/B$, parameterized by an operator $X$ and certain subspaces $H$ of the Lie algebra of $G$. We identify several families of  Hessenberg varieties in type $A_{n-1}$ that are $T$-stable subvarieties of $G/B$, as well as families that are invariant under a subtorus $K$ of $T$. In particular, these varieties are candidates for the use of equivariant methods to study their geometry. Indeed, we are able to show that some of these varieties are unions of Schubert varieties, while others cannot be such unions.

Among the $T$-stable Hessenberg varieties, we identify several that are {\it GKM spaces}, meaning $T$ acts with isolated fixed points and a finite number of one-dimensional orbits, though we also show that not all Hessenberg varieties with torus actions and finitely many fixed points are GKM. 

We conclude with a series of open questions about Hessenberg varieties, both in type $A_{n-1}$ and in general Lie type. 
\end{abstract}

\section{Introduction}
\begin{center} {\it To our inspirational advisor, friend, and cheerleader, Victor Guillemin. }
\end{center}

\bigskip

GKM (Goresky-Kottwitz-MacPherson) theory is the name given to a number of algebro-combinatorial techniques to compute the equivariant cohomology of suitable spaces
with torus actions.  For a space $\mathcal M$ with a $T$-action, Goresky, Kottwitz, and MacPherson identified conditions under which the equivariant cohomology $H_T^*(\mathcal M)$ could be described in terms of the restriction of classes to 
$T$-fixed points \cite{GKM98}, also called {\it localizations}. They built on work of Atiyah and Bott \cite{AtiBot84}, who developed localization techniques in equivariant cohomology, and of Chang and Skjelbred in the 1970s, who proved that under very general conditions, the equivariant cohomology of a $T$-space could be described in terms of the cohomology of fixed point sets of proper subtori \cite{ChaSkj74}.
The key insight that sparked GKM theory was the restricted class of $T$-spaces $\mathcal M$ that Goresky, Kottwitz, and MacPherson considered, which permitted them to describe the $T$-equivariant cohomology ring of $\mathcal M$ in terms of linear relations in the dual $\mathfrak t^*$ of the Lie algebra of the torus.  

These ideas gained particular traction in the symplectic community due to the well-known convexity result of Guillemin and Sternberg \cite{GuiSte82}, and separately Atiyah \cite{Ati82}, stating that the image of the moment map for a Hamiltonian $T$-action on a compact manifold $M$ is given by the convex hull of the moment map image of the fixed point set. As this image lies in $\mathfrak t^*$ and can often be recovered (up to a translation) using localization, 
research turned to questions including: how to apply the GKM results outside the algebraic category, how to classify spaces that satisfy the (somewhat restrictive) GKM conditions of the original theory, and how to extend the theory to geometric spaces that have properties close to the original GKM requirements.

Victor Guillemin and his graduate students, postdoctoral fellows, collaborators and colleagues  developed and generalized GKM theory, popularizing it with a broad mathematical audience. Guillemin, Holm and Zara applied GKM theory to homogeneous spaces $G/H$ in \cite{GHZ06}, and then Guillemin and Zara extended it with Sabatini to certain fiber bundles with $T$ actions \cite{GuiSabZar12}.
Tolman and Weitsman \cite{TolWei99} extended the results to symplectic manifolds with Hamiltonian torus actions using Morse Theory. The first author and Holm used these results to identify the equivariant cohomology of some symplectic manifolds that are almost GKM in a specific sense \cite{GolHol01}. 
Results about the cohomology and symplectic and GIT quotients were refined using GKM theory, for example, in work by the first author, Holm and Jeffrey \cite{GHJ03}. The results also extended to orbifold (Chen-Ruan) cohomology, in both equivariant cohomology \cite{GHK07} and $K$-theory \cite{GHHK11}, and to intersection cohomology in, for example, \cite{BraPro09}. The first author and Tolman used GKM techniques as part of a strategy to generalize Schubert calculus \cite{GolTol09}. More recently, several authors have been interested in generalizing Schubert calculus to subvarieties of $G/B$, including both of the authors of this article.

Guillemin and Zara constructed a purely combinatorial model for GKM theory, extending it to {\it graphs} with additional structure that mimics the geometric properties of manifolds with group actions \cite{GuiZar00, GuiZar01}. Other combinatorial generalizations include work done by the second author on splines (see, for example,  \cite{Tym16a}).
Still others began exploring the geometry and topology of GKM spaces without regard to their cohomology rings, resulting in GKM bundles, GKM orbifolds (e.g. \cite{Zon15}), GKM sheaves (e.g. \cite{Bai14}), and GKM compatible subsets (e.g. \cite{HarTym17}).

Applications of GKM theory to cohomology theories of flag varieties were especially productive. 
The results that stemmed from GKM theory contributed to well-established literature in Schubert calculus involving combinatorics, geometry and representation theory. A seminal paper by Billey \cite{Bil99} found formulas for the localizations of all Schubert classes on flag varieties $G/B$ (see \cite{Tym16b} for a description of GKM on $G/B$). GKM theory then led to new descriptions of the equivariant cohomology and $K$-theory of $G/B$, resulting in many additions to the literature on Schubert calculus. 
This includes work by, for example, Harada, Henriques and Holm on affine Schubert varieties \cite{HHH05}, Knutson and Tao's work on the equivariant cohomology of the Grassmannian \cite{KnuTao03}, Abe and Matsumura on weighted Grassmannians \cite{AbeMat15}, and work by many others who either used GKM theory, or implicitly benefited from what became standard techniques. The introduction we provide here is necessarily incomplete.

Still now, more than two decades after the original GKM paper, GKM theory thrives in the sweet spot in which conditions remain simple and open to direct calculation, and yet result in fruitful outcomes. This may be why Victor has had almost 50 PhD students, and more than 200 ``descendants." We wrote this paper in honor of this tradition.

In this paper, we analyze torus actions on one important family of varieties called {\it Hessenberg varieties} and describe conditions that ensure certain Hessenberg varieties are GKM, namely GKM theory applies to them.  Suppose that $G$ is the group of $n \times n$ invertible matrices and $B$ is the Borel subgroup that consists of upper-triangular matrices.  Hessenberg varieties are defined by two objects: an $n \times n$ matrix $X$ and a certain linear subspace $H$ of $n \times n$ matrices satisfying Definition~\ref{definition: hess spaces}. The variety $\Hess(X,H)$ is defined by the condition
\[\Hess(X,H) = \{ \textup{ flags } gB \in G/B: g^{-1}Xg \in H\}.\]

Hessenberg varieties arise in various contexts.  In representation theory, the Hessenberg varieties $\Hess(X,\mathfrak{b})$ are called {\it Springer fibers} when $X$ is nilpotent and {\it Grothendieck-Springer fibers} for general $X$. They are used in one of the seminal constructions of a geometric representation.  T.~Springer discovered that the Weyl group acts on the cohomology of what are now called  Springer fibers \cite{Spr76, Spr78}. In Lie type $A_{n-1}$ the top-degree cohomology of $\Hess(X,\mathfrak{b})$ for nilpotent $X$ is an irreducible representation, and varying over all nilpotent conjugacy classes of $X$ recovers every irreducible representation of the Weyl group $S_n$ exactly once in that case. The Springer representation has since been constructed in many ways, using tools across geometry, topology, algebra, and combinatorics, among many others \cite{KazLus79, KazLus80, BorMac83, Hot81, GarPro92}. 

Another thrust of research asks about geometric properties of Hessenberg varieties, including: What kinds of cell decompositions do they have? \cite{Tym06a, Tym07, Pre13} What dimension are they? \cite{Ive06, Tym06a} Are they, or their components, smooth and if not, what kinds of singularities do they have? \cite{AbeIns, FreMel10, FreMel11, InsPre19, InsYon12} Which Hessenberg varieties are Schubert varieties? \cite{AbeCro16, EPS}

More recently, Hessenberg varieties emerged as an integral part of a geometric proof of the Stanley-Stembridge conjecture in combinatorial representation theory.  The Stanley-Stembridge conjecture states that a particular kind of symmetric function can be expressed as a nonnegative linear combination of the basis of elementary symmetric functions.  The symmetric functions considered by the Stanley-Stembridge conjecture are chromatic symmetric functions of the incomparability graph of a kind of poset called  $(3+1)$-free; the conjecture is related to other important claims about, e.g., immanants \cite{Ste92}.  In a sequence of papers, Shareshian and Wachs \cite{ShaWac16}, Brosnan and Chow \cite{BroCho18}, and Guay-Paquet \cite{Gua2} established an explicit relationship between the chromatic symmetric function of the Stanley-Stembridge conjecture and the cohomology of the Hessenberg varieties $\Hess(X,H)$ when $X$ is regular semisimple, under an action of the Weyl group $S_n$ that is easiest to describe using GKM theory \cite{Tym08b}.  Much work has been done recently to prove the Stanley-Stembridge conjecture using GKM-type methods for regular semisimple Hessenberg varieties \cite{AHM19, CHL, Cho}.

The basic question of this paper is: 
\begin{center}{\bf Which Hessenberg varieties are GKM}? \\ \end{center}

The answer depends on the choice of linear operator $X$, Hessenberg space $H$, as well as which torus acts.  The case when $X$ is diagonal (or more generally semisimple) has already been resolved.  When $X$ is diagonal, the variety $\Hess(X,H)$ sports an action of the torus of diagonal matrices for all $H$ and in all Lie types, and thus is GKM.  This follows from Lemma~\ref{dumbactioncondition} or, e.g., \cite{Tym08b}. When $X$ is diagonal with distinct values along the diagonal (or more generally regular semisimple), the equivariant cohomology of the variety $\Hess(X,H)$ has particularly beautiful combinatorial structure \cite{Tym08b}.  In this case, if additionally $H$ consists of $\mathfrak{b}$ together with the negative simple root spaces, then the Hessenberg variety $\Hess(X,H)$ is actually the well-known toric variety associated to the permutohedron \cite{DPS92}.  

 However, when $X$ is nilpotent, the Hessenberg variety $\Hess(X,H)$ does not usually have a ``big enough" torus action to be GKM.  For instance, suppose $X$ is regular nilpotent, namely $X$ consists of a single Jordan block, or equivalently $X = \sum_{i=1}^{n-1} E_{\alpha_i}$ where $E_{\alpha_i}$ is a nonzero element in the $\alpha_i$-weight space of $\mathfrak g$ for simple roots $\alpha_i$. The Hessenberg variety $\Hess(X,H)$ admits a one-dimensional $\mathbb C^*$-action on $\Hess(X, H)$ which is necessarily too small a torus for $\Hess(X,H)$ to be GKM under this action.  Poset pinball is one approach to analyzing the equivariant cohomology of a subvariety of a GKM variety on which the full torus does not act, by using GKM-type techniques to extrapolate information from the ambient variety \cite{HarTym17}. It has been particularly successful for analyzing nilpotent Hessenberg varieties \cite{Dre15, HarTym11, Ins15}.
 
 For one special class of nilpotent $X$, Abe and Crooks proved that the Hessenberg variety $\Hess(X,H)$ admits the full torus action, is GKM with respect to this action, and in fact is a union of Schubert varieties \cite{AbeCro16}. We give a slightly more streamlined proof that it admits a torus action and is GKM in Proposition~\ref{proposition: F1 in all types}. Balibanu and Crooks recently proposed a different direction than that of this paper, in which they classify regular Hessenberg varieties whose cohomology rings ``behave" like the cohomology of a GKM space without necessarily being GKM spaces \cite{BalCro}.

In this paper, our goal is instead to identify the largest subtorus of $T$ that acts on nilpotent Hessenberg varieties $\Hess(X,H)$.  We note that virtually nothing is known about the case when $X$ is neither semisimple nor nilpotent.  

More precisely, after establishing notation and core definitions in Section~\ref{section: background}, we prove the following results.
\begin{itemize}
    \item In Section~\ref{section: circle actions} we show that all $\Hess(X,H)$ admit the action of at least a rank-one subtorus of $T$ and identify a particular family of matrices $X$ called {\it skeletal nilpotents} that always have a $\mathbb C^*$ action with isolated fixed points.
    \item In Section~\ref{section: torus actions}, we show that, for $X$ skeletal nilpotent,  $\Hess(X,H)$ has an action of a codimension-$k$ subtorus of $T$.  We also prove that the corresponding fixed points in $\Hess(X,H)$ are isolated.
    \item  
    Theorem~\ref{theorem: main column test}  states one of our main results: a description of a system of linear equations that must hold for the orbit of an element $gB \in \Hess(X,H)$ under a particular subtorus to be in $\Hess(X,H)$. The system of equations depends on both $g$ and the parameters of the subtorus.
    \item Section~\ref{section: torus actions for Fk} then specializes to a particular family of matrices $F_k$ and uses Theorem~\ref{theorem: main column test} to identify various conditions on Hessenberg varieties that either imply or preclude an action of the full torus, in some cases generalizing the examples from the beginning of the section. In particular, Theorem~\ref{theorem: conditions for T-stability} fully characterizes the subtori of $T$ that act on $\Hess(F_2,H)$ for all Hessenberg spaces $H$.  Theorem~\ref{theorem: max subtorus on Fk} characterizes the Hessenberg spaces $H$ for which only a codimension-$k$ subtorus of $T$ acts on $\Hess(F_k,H)$.  As an application, we show that a particularly important family called {\it Peterson varieties} admits just the rank-one subtorus of $T$ used in the literature.
    \item Finally, Section~\ref{section: gkm spaces} proves that many Hessenberg varieties are GKM spaces.  This claim holds of course for all the Hessenberg varieties that admit a full torus action; see also Lemma~\ref{lemma: subvariety of GKM with same torus action is GKM} that the analogous claim is true for any subvariety of a GKM space that itself carries full torus action.  We also identify all the Hessenberg varieties $\Hess(F_2, H)$ that are GKM with respect to the codimension-one subtorus $K \subseteq T$ that acts on all Hessenberg varieties for $F_2$.  Our result demonstrates a general principle that there is a ``smallest possible Hessenberg space" under which a given torus action is GKM on the Hessenberg varieties for a particular $X$.  
    \item In particular, Remark~\ref{remark: non-Schubert GKM} establishes families of Hessenberg varieties that are GKM with respect to subtori $K \subseteq T$ but not $T$-stable, and thus are neither Schubert varieties nor unions of Schubert varieties.  It also demonstrates $T$-stable families of Hessenberg varieties that are neither unions of Schubert varieties nor homeomorphic to unions of Schubert varieties.  This answers a longstanding open question.
    \item Section~\ref{section: questions} ends with open questions.  
\end{itemize}

The first author was partially supported by National Science Foundation (NSF) grant \#2152312, and the second author was partially supported by NSF grant \#2054513.

\section{Background and notation} \label{section: background}

This section contains a quick introduction to the terminology and key objects in this paper. We begin with fundamentals about flag varieties, then review the core ideas of GKM theory and Hessenberg varieties.  We treat the case of Lie type $A_n$ though many concepts generalize to other Lie types; see also Question~\ref{question: general Lie type} in Section~\ref{section: questions}.

The flag variety is the quotient $\G/B$ where $B \subseteq \G$ is a Borel subgroup.  We take $B$ to be the upper-triangular matrices.  Alternatively, we can describe flags geometrically in terms of nested subspaces. Let $V_g^{m}$ denote the linear span of the first $m$ columns of $g\in \G$.  Then the flags in $\G/B$ can be equivalently described as the collection of sequences of nested subspaces
\[
V_g^{\bullet} = \left(V_g^1 \subseteq V_g^2 \subseteq V_g^3 \subseteq V_g^4 \subseteq \cdots \subseteq V_g^n=\mathbb C^n\right)
\]
where $V_g^i$ is $i$-dimensional.  When using explicit coordinates, we use the standard basis vectors $e_i \in \mathbb{C}^n$ that are zero in all rows except the $i^{th}$ where they are one.

Throughout we use $T$ to denote the maximal torus of $\G$ consisting of diagonal matrices. We use $W$ to denote the permutation matrices inside $\G$.  We also identify these permutation matrices both with the Weyl group and with the permutations on $\{1,2,\ldots,n\}$ according to the rule that $w(e_i) = e_{w(i)}$.

It is convenient to use the terminology of pivots from elementary linear algebra.

\begin{definition}
For any matrix, let $piv(M_j)$ denote the pivot of column $M_j$, namely the lowest nonzero row in the $j$th column vector.  If $M_j$ is the zero vector then take $piv(M_j)$ to be $0$.
\end{definition}

Gaussian elimination ensures that the pivots of a matrix are in convenient locations.  We often do calculations involving some number of columns from a square $n \times n$ matrix. For that reason, we introduce the following notation.

\begin{definition}
We say that the $n \times m$ matrix $M$ is in {\it normalized Schubert form} if there is a $n \times n$ permutation matrix $w\in W$ such that the pivots of any nonzero columns of $M$ coincide with the corresponding columns of $w$. Equivalently,  $M$ satisfies the following conditions:
\begin{itemize}
    \item The $i$th column of $M$ has a $1$ in the $w(i)$ row;
    \item The $i$th column of $M$ has $0$ in rows $\ell$ for $\ell>w(i)$; and
    \item  The $w(i)$th row of $M$ has $0$s in columns $j$ for $j>i$.
\end{itemize} 
\end{definition}
\begin{example} \label{example: schubert cells}
The $4\times 4$ matrices 
\[
M_1 = 
\begin{pmatrix}
a_{11} & a_{12} & 1 & 0\\
a_{21} & a_{22} & 0 & 1 \\
1 & 0 & 0 & 0\\
0 & 1 & 0 & 0
\end{pmatrix} \hspace{1em}
M_2 = 
\begin{pmatrix}
a_{11} & 1 & 0 & 0\\
a_{21} & 0 & 1 & 0\\
a_{31} & 0 & 0 & 1\\
1 & 0 & 0 & 0
\end{pmatrix}
\hspace{1em} 
M_3 = 
\begin{pmatrix}
a_{11} & a_{12} & a_{13} & 1\\
a_{21} & a_{22} & 1 & 0\\
a_{31} & 1 & 0 & 0\\
1 & 0 & 0 & 0
\end{pmatrix}
\]
are in normalized Schubert form, with $w_1= [3412]$, $w_2= [4123]$, and $w_3 = [4321]$ respectively.
\end{example}

One way to define Schubert cells is using normalized Schubert form, though Schubert cells are classically defined as double cosets.

\begin{definition} \label{definition: schubert cell}
For each $w\in W$, the {\it Schubert cell} $\mathcal{C}_w \subseteq \G/B$ consists of the flags $gB$ for which $g$ is in normalized Schubert form with pivots in entries $w$.  Equivalently, the Schubert cell is the coset $BwB \subseteq \G/B$. The {\it Schubert variety} is the closure of $\mathcal C_w$ in $G/B$ and is denoted $\overline{\mathcal{C}}_w$.
\end{definition}

\begin{example}
For $n=4$ and $w_1=[3412],$ the Schubert cell 
\[
\mathcal C_{w_1} = \{ \langle \vec{v}_1\rangle \subseteq \langle \vec{v}_1, \vec{v}_2\rangle \subseteq  \langle \vec{v}_1, \vec{v}_2, e_1 \rangle \subseteq \mathbb C^4\},
\]
where $\vec{v}_1= a_{11}e_1+ a_{21}e_2+ e_3$ and  $\vec{v}_2=a_{12}e_1+a_{22}e_2+e_4$ for any $a_{11}, a_{21}, a_{12}, a_{22}\in \mathbb C$. 
\end{example}

Schubert cells are important because they form a CW-decomposition of the flag variety and their closures, the Schubert varieties, induce a basis for $H^*(G/B)$.  Corollary~\ref{corollary: last row of H empty} and Remark~\ref{remark: non-Schubert GKM} discuss how Schubert cells can and cannot be used to decompose Hessenberg varieties.

\subsection{GKM theory}

GKM theory has a rich and beautiful combinatorial, geometric, and topological literature, primarily in the service of computing equivariant cohomology.  We focus instead on the question of whether GKM theory can be applied to particular varieties, namely whether a variety is a {\it GKM space}.

\begin{definition}
Let $K$ be an algebraic torus and let $\mathcal{X}$ be a complex projective algebraic variety admitting a $K$-action.  We say $\mathcal X$ is a {\it GKM space} if
\begin{itemize}
    \item The fixed point set $\mathcal X^{K}$ consists of isolated points.
    \item There are finitely many one-dimensional orbits of $K$ on $\mathcal X$.
    \item The space $\mathcal X$ is {\textit equivariantly formal} with respect to the action of $K$.
\end{itemize}
\end{definition}

Note that $\mathcal{X}$ does {\it not} need to be smooth, as the next remark details.

\begin{remark}
The definition of equivariant formality is fairly technical and has different statements (see, e.g., \cite[Theorems 1.6.2 and 14.1]{GKM98}).  In practice, we usually make additional geometric assumptions about $\mathcal{X}$ that imply equivariant formality---e.g. that $\mathcal{X}$ has no odd-degree ordinary cohomology or that $\mathcal{X}$ is a symplectic manifold with a Hamiltonian $K$-action \cite[Theorem 14.1 Parts (1) and (9)]{GKM98}.  Hessenberg varieties have no odd-degree cohomology because they have a paving by (complex) affines \cite{Tym06a} and so are equivariantly formal with respect to every torus action.
\end{remark}

With these assumptions, the boundary of each one-dimensional $K$-orbit contains two $K$-fixed points and the closure of each one-dimensional orbit is isomorphic to $\mathbb{CP}^1$ with fixed points at north and south pole.   This means the $0$- and $1$-dimensional orbits of a GKM space can be described as a graph, often called the {\it GKM graph} or {\it moment graph} of the variety.  Goresky-Kottwitz-MacPherson's theorem states that the equivariant cohomology of $\mathcal X$ is built from the moment graph, with each labeled by $K$-weights on the corresponding one-dimensional orbit, according to an algebraic algorithm. 

In some of what follows, we count orbits directly.  However, we also use the following claim, which shows that $K$-invariant subvarieties of GKM spaces are GKM themselves. The proof is straightforward but we record it because we refer to it several times.

\begin{lemma} \label{lemma: subvariety of GKM with same torus action is GKM}
If $\mathcal{X}$ is GKM with respect to a torus $K$ and $\mathcal{Y} \subseteq \mathcal{X}$ is a subvariety that is also $K$-stable then $\mathcal{Y}$ is GKM with respect to $K$. 
\end{lemma}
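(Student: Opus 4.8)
The plan is to verify the three defining conditions of a GKM space for $\mathcal{Y}$ one at a time, exploiting the fact that a $K$-stable subvariety inherits its $K$-orbit structure directly from the ambient space, so that two of the three conditions transfer for free.

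First I would handle the fixed points. Since $\mathcal{Y}$ is $K$-stable, the $K$-action on $\mathcal{X}$ restricts to a $K$-action on $\mathcal{Y}$, and a point $y \in \mathcal{Y}$ is fixed by $K$ as a point of $\mathcal{Y}$ exactly when it is fixed by $K$ as a point of $\mathcal{X}$. Hence $\mathcal{Y}^{K} = \mathcal{Y} \cap \mathcal{X}^{K}$. Because $\mathcal{X}$ is GKM, $\mathcal{X}^{K}$ is a finite set of isolated points, so its subset $\mathcal{Y}^{K}$ is again finite and consists of isolated points of $\mathcal{Y}$.

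Next I would treat the one-dimensional orbits. For any $y \in \mathcal{Y}$, $K$-stability gives $K \cdot y \subseteq \mathcal{Y}$, and this orbit is literally the same subset of $\mathcal{X}$ as the orbit of $y$ under the ambient action; its dimension is intrinsic to the subset and so is the same whether computed in $\mathcal{Y}$ or in $\mathcal{X}$. Thus every one-dimensional $K$-orbit in $\mathcal{Y}$ is a one-dimensional $K$-orbit in $\mathcal{X}$, so the collection of one-dimensional orbits in $\mathcal{Y}$ is a subset of the finite collection of one-dimensional orbits in $\mathcal{X}$, and is therefore finite. In particular the moment graph of $\mathcal{Y}$ is a subgraph of the moment graph of $\mathcal{X}$ on the vertex set $\mathcal{Y}^{K}$.

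The main obstacle is the third condition, equivariant formality, which does not pass to an arbitrary $K$-stable subvariety by a purely orbit-theoretic argument and so is the one place where the ambient GKM hypothesis is not enough. Here I would appeal to the geometry of the varieties at hand rather than to a general principle: in every application of this lemma $\mathcal{Y}$ is a Hessenberg variety, and by the earlier remark Hessenberg varieties admit a paving by complex affines \cite{Tym06a}, hence have vanishing odd-degree ordinary cohomology and are equivariantly formal with respect to every torus action. Combining equivariant formality with the isolated fixed points and the finitely many one-dimensional orbits established above shows that $\mathcal{Y}$ is a GKM space with respect to $K$.
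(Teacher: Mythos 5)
Your proposal is correct, and on the first two GKM conditions it coincides with the paper's own proof, which is exactly the two-sentence observation that $\mathcal{Y}^K \subseteq \mathcal{X}^K$ and that every one-dimensional $K$-orbit of $\mathcal{Y}$ is a one-dimensional $K$-orbit of $\mathcal{X}$, so both sets are finite. Where you differ is in the third condition: the paper's proof simply does not mention equivariant formality, leaving it implicitly covered by the earlier remark that Hessenberg varieties are paved by complex affines \cite{Tym06a}, hence have no odd-degree ordinary cohomology and are equivariantly formal for \emph{every} torus action. You make this dependence explicit, and you are right that it is the one condition that does not transfer by a purely orbit-theoretic argument --- a $K$-stable subvariety of a GKM space need not be equivariantly formal in general, so the lemma as literally stated is only as strong as some supplementary formality hypothesis. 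The trade-off is that your argument, by invoking the affine-paving property, proves the lemma only for the subvarieties arising in the paper's applications rather than for an arbitrary $K$-stable subvariety; but since the paper's proof silently has the same limitation, your version is if anything the more careful of the two, flagging precisely where an extra hypothesis (vanishing odd cohomology, or some other source of equivariant formality for $\mathcal{Y}$) is being consumed.
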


\begin{proof}
The $K$-fixed points $\mathcal{Y}^K$ are contained in $\mathcal{X}^K$ and the one-dimensional $K$-orbits of $\mathcal{Y}$ are a subset of the one-dimensional $K$-orbits of $\mathcal{X}$.  So both sets are finite and $\mathcal{Y}$ is GKM with respect to $K$.
\end{proof}

A variety that {\it contains} another $T$-stable variety need not itself be $T$-stable.  Indeed, the variety consisting of a single $T$-fixed point is itself $T$-stable. However, most varieties containing a $T$-fixed point are not themselves $T$-stable.  Example~\ref{example: T-stable containment} shows a particular case of this involving Hessenberg varieties.

Note that $T$ contains a one-dimensional subtorus that acts trivially on $G/B$, namely the diagonal constant matrices.  It's common to quotient by these constant matrices but using $T$ is more convenient for our algebraic calculations.  Readers accustomed to using the smaller torus will note that dimensions may be slightly different for $T$.

\subsection{Hessenberg varieties in type $A_{n-1}$}

Hessenberg varieties are subvarieties of the flag variety defined by two parameters: an element $X \in \mathfrak{g}$ and a particular kind of linear subspace $H \subseteq \mathfrak{g}$.  We give both the general definition and some concrete characterizations that apply in Lie type $A_{n-1}$.

\begin{definition} \label{definition: hess spaces}
Let $\mathfrak{g}$ be a Lie algebra with fixed Borel subalgebra $\mathfrak{b}$. A {\it Hessenberg space} $H \subseteq \mathfrak{g}$ is a linear subspace of $\mathfrak{g}$ satisfying the two conditions:
\begin{itemize}
    \item $H \supseteq \mathfrak{b}$
    \item $H$ is $\mathfrak{b}$-stable, in the sense that $H \supseteq [H,\mathfrak{b}]$.
\end{itemize}
\end{definition}

Note that $\mathfrak{b} \supseteq \mathfrak{t}$ so $\mathfrak{b}$-stability in fact implies that $H$ is a direct sum of root spaces together with $\mathfrak{t}$.  In Lie type $A_{n-1}$ this gives the following two equivalent characterizations.

\begin{definition}
In Lie type $A_{n-1}$ a {\it Hessenberg function} $h: \{1,2,\ldots,n\} \rightarrow \{1,2,\ldots,n\}$ is a map satisfying two conditions: 
\begin{itemize}
    \item $h(i) \geq i$ for all $i$
    \item $h(i) \geq h(i-1)$ for all $i=2,3,\ldots,n$.
\end{itemize}
\end{definition}

In Lie type $A_{n-1}$ there is a bijection between Hessenberg spaces $H$ and Hessenberg functions $h$ given by
\[H = \left\{M \textup{ is an } n \times n \textup{ matrix with } M_{i,j} = 0 \textup{ if } i > h(j) \right\}.\]
For this reason we use $H$ and $h$ interchangeably in what follows. 

\begin{example}\label{example: hessenberg spaces} The Hessenberg spaces
\[
H_1 = \begin{pmatrix}
* & * & * & * \\
0 & * & * & * \\
0 & 0 & * & * \\
0 & 0 & 0 & *
\end{pmatrix}
\hspace{1em}
H_2 = \begin{pmatrix}
* & * & * & * \\
0 & * & * & * \\
0 & * & * & * \\
0 & 0 & * & *
\end{pmatrix}
\hspace{1em}
H_3 = \begin{pmatrix}
* & * & * & * \\
* & * & * & * \\
* & * & * & * \\
0 & 0 & * & *
\end{pmatrix}
\]
correspond respectively to Hessenberg functions $h_1$, $h_2$, and $h_3$ where
\begin{itemize}
    \item $h_1(i)=i$ for all $i$,
    \item $h_2(1)=1, h_2(2)=3,$ and $h_2(3)=h_2(4)=4$, and 
    \item $h_3(1)=h_3(2)=3$ and $h_3(3)=h_3(4)=4.$
\end{itemize}
\end{example}

Using Hessenberg spaces (or Hessenberg functions, in type $A_{n-1}$) we define Hessenberg varieties as follows.

\begin{definition}
Fix $X \in \mathfrak{g}$ and a Hessenberg space $H \subseteq \mathfrak{g}$.  The Hessenberg variety of $X$ and $H$ is the subvariety of the flag variety defined as
\[\Hess(X,H) = \{ gB \in G/B: g^{-1}Xg \in H\}.\]
In Lie type $A_n$ this is equivalent to the linear-subspace characterization that
\[\Hess(X,H) = \{ \textup{ Flags } V_g^{\bullet}: XV_g^{i} \subseteq V_g^{h(i)} \textup{ for all } i=1,\ldots,n\}.\]
\end{definition}

Since we have fixed an ordered basis (equivalently a Borel), we can consider the intersections of each Schubert cell from Definition~\ref{definition: schubert cell} with a fixed Hessenberg variety.

 \begin{definition}
Given $\Hess(X,H)$ the {\it Hessenberg Schubert cells} are the intersections $\mathcal{C}_w \cap \Hess(X,H)$.
 \end{definition} 

For certain matrices $X$ in each conjugacy class, the Schubert cells $\mathcal{C}_w$ intersect $\Hess(X,H)$ in affine linear spaces \cite{Tym06a, Pre13}.  In a slight abuse of notation, we use the phrase  {\it Hessenberg Schubert cells} even for intersections that may not be affine linear.

We now restate conditions to determine if an element $gB\in G/B$ is in $\Hess(X, H)$. Recall that $V_g^{m}$ indicates
 the linear span of the first $m$ columns of $g$ and note that the flag $gB\in \Hess(X, H)$ if and only it the $j$th column of $Xg$ is in $V_g^{h(j)}$ for all $j=1, \dots n$. This motivates the following terminology.

\begin{definition}\label{Hessenbergconditions}
  Let $g_j$ denote the $j$th column of $g$ so $Xg_j = (Xg)_j.$ We call the conditions $Xg_j \in V_g^{h(j)}$ the {\it Hessenberg conditions.}
\end{definition}

\begin{example}\label{example: Hess schubert example}
For instance, suppose that $X$ is the $4 \times 4$ matrix whose first two columns are zero, third column is the basis vector $e_1$, and last column is $e_2$.  We can compute $X \mathcal{C}_{w_i}$ for each of the Schubert cells from Example~\ref{example: schubert cells}:
\[XM_1 = \begin{pmatrix}
1 & 0 & 0 & 0\\
0 & 1 & 0 & 0 \\
0 & 0 & 0 & 0\\
0 & 0 & 0 & 0 \\\end{pmatrix} \hspace{1em}
XM_2 = 
\begin{pmatrix}
a_{31} & 0 & 0 & 1\\
1 & 0 & 0 & 0 \\
0 & 0 & 0 & 0\\
0 & 0 & 0 & 0 
\end{pmatrix}
\hspace{1em} 
XM_3 = 
\begin{pmatrix}
a_{31} & 1 & 0 & 0\\
1 & 0 & 0 & 0 \\
0 & 0 & 0 & 0\\
0 & 0 & 0 & 0 
\end{pmatrix}
\]
Using the Hessenberg function $h_3$ from Example~\ref{example: hessenberg spaces}, the Hessenberg conditions for each of the previous Schubert cells state that the first two columns of $XM_i$ must be in the span of the first three columns of $M_i$ and the last two columns of $XM_i$ must be in $\mathbb{C}^4$.  The conditions on the last two columns are vacuous.
\end{example}

The following is an immediate consequence of the definition of Hessenberg conditions.

\begin{corollary}
Suppose that $gB \in \Hess(X,H)$ and that $Xg_j$ is not identically zero for some $j$.  Then the column $g_{j'}$ with $piv(g_{j'})=piv(Xg_j)$ is within the first $h(j)$ columns, or equivalently $j' \leq h(j)$. 
\end{corollary}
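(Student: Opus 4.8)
The plan is to reduce to a normalized Schubert form representative and then track how pivots behave under the linear combination forced by the Hessenberg condition. First I would choose the representative $g$ of the flag $gB$ to be in normalized Schubert form; this is harmless since every coset has such a representative, and it is exactly what makes the phrase ``the column $g_{j'}$ with $piv(g_{j'}) = piv(Xg_j)$'' well defined. In normalized Schubert form the pivots $piv(g_1), \ldots, piv(g_n)$ are precisely $w(1), \ldots, w(n)$ for the associated permutation $w \in W$, so they are distinct and every row index arises as the pivot of exactly one column. In particular, since $Xg_j \neq 0$ its pivot $piv(Xg_j)$ is a genuine row index, and there is a unique $j'$ with $w(j') = piv(Xg_j)$.

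The heart of the argument is a short lemma on pivots of linear combinations. By the Hessenberg conditions (Definition~\ref{Hessenbergconditions}), membership in $\Hess(X,H)$ gives $Xg_j \in V_g^{h(j)}$, so I can write $Xg_j = \sum_{i=1}^{h(j)} c_i g_i$. I claim that $piv(Xg_j) = \max\{w(i) : c_i \neq 0,\ i \leq h(j)\}$, where the set is nonempty because $Xg_j \neq 0$. The inequality $\leq$ is immediate: every column $g_i$ with $c_i \neq 0$ vanishes in all rows strictly below $w(i)$, hence so does the sum. For the reverse inequality, let $i^*$ attain the maximum and set $r = w(i^*)$. Among the columns with $c_i \neq 0$, only $g_{i^*}$ has a nonzero entry in row $r$ — every other such column has pivot $w(i) < r$ and therefore a zero in row $r$ — and by normalized Schubert form that entry equals $1$. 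Thus the row-$r$ entry of $Xg_j$ is $c_{i^*} \neq 0$, forcing $piv(Xg_j) \geq r$.

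The lemma then gives $piv(Xg_j) = w(i^*)$ for the maximizing index $i^* \leq h(j)$. Since also $w(j') = piv(Xg_j)$ by the choice of $j'$, injectivity of $w$ yields $j' = i^* \leq h(j)$, which is the claim. The only delicate point — and the step I would be most careful to justify — is the reverse pivot inequality: one must verify that the contribution of the highest-indexed pivot cannot be cancelled, which is exactly where normalized Schubert form (distinct pivots, zeros below each pivot, and a $1$ in the pivot position) is used. Everything else is bookkeeping.
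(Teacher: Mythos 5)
Your proof is correct: the paper states this corollary without proof (as an ``immediate consequence'' of the Hessenberg conditions), and your argument --- passing to the normalized Schubert form representative, writing $Xg_j = \sum_{i \leq h(j)} c_i g_i$, and showing $piv(Xg_j)$ equals the maximal pivot $w(i^*)$ among columns with $c_i \neq 0$ since distinct pivots and zeros below them prevent cancellation in row $w(i^*)$ --- is precisely the intended fleshed-out justification. The one point you gloss (that the Hessenberg conditions persist for the normalized representative) is harmless, since the linear-subspace characterization $XV_g^i \subseteq V_g^{h(i)}$ depends only on the flag and the spans $V_g^m$ are unchanged under right multiplication by $B$.
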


We recall the following well-known proposition that the Hessenberg variety of $X$ is homeomorphic to the Hessenberg variety of any conjugate of $X$.  Informally, this means that if we fix a basis, we may choose $X$ relative to $T$ or vice versa but gain no added generality by varying both $X$ and $T$ independently.

\begin{proposition}
Let $X \in \mathfrak{g}$, $g \in G$, and $H \subseteq \mathfrak{g}$ be a Hessenberg space.  Then the map sending $g'B \in \Hess(X,H)$ to $gg'B  \in \Hess(gXg^{-1}, H)$ is a homeomorphism
\[\Hess(X,H) \cong \Hess(gXg^{-1}, H).\]
\end{proposition}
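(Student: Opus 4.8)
The plan is to realize the proposed map as the restriction of left-translation by $g$ on the whole flag variety, and then to check that this translation carries one Hessenberg variety onto the other by means of a single conjugation identity. Let $L_g \colon G/B \to G/B$ denote the map $g'B \mapsto gg'B$. First I would verify that $L_g$ is well defined on cosets: if $g'B = g''B$ then $g' = g''b$ for some $b \in B$, whence $gg' = (gg'')b$ and so $gg'B = gg''B$. Since $G$ acts on $G/B$ by left multiplication, $L_g$ is a homeomorphism of $G/B$ with continuous inverse $L_{g^{-1}}$; this is standard and requires no Hessenberg input.

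Next I would pin down the defining conditions. By definition $g'B \in \Hess(X,H)$ if and only if $(g')^{-1} X g' \in H$, while $gg'B \in \Hess(gXg^{-1}, H)$ if and only if $(gg')^{-1}(gXg^{-1})(gg') \in H$. The key step is the elementary identity
\[
(gg')^{-1}(gXg^{-1})(gg') = (g')^{-1} g^{-1} g X g^{-1} g g' = (g')^{-1} X g',
\]
in which the conjugating copies of $g$ cancel. Consequently the membership condition placing $gg'B$ in $\Hess(gXg^{-1},H)$ is literally the same condition $(g')^{-1}Xg' \in H$ that places $g'B$ in $\Hess(X,H)$.

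Combining these observations, $L_g$ restricts to a bijection from $\Hess(X,H)$ onto $\Hess(gXg^{-1}, H)$: the identity shows $L_g$ sends $\Hess(X,H)$ into $\Hess(gXg^{-1},H)$, and applying the same argument to $g^{-1}$ (with $X$ replaced by $gXg^{-1}$) shows $L_{g^{-1}}$ sends $\Hess(gXg^{-1},H)$ back into $\Hess(X,H)$, so the two restricted maps are mutually inverse. Since each Hessenberg variety carries the subspace topology inherited from $G/B$ and $L_g$ is a homeomorphism of $G/B$, its restriction to this $L_g$-matched pair of subvarieties is a homeomorphism onto its image, giving the asserted $\Hess(X,H) \cong \Hess(gXg^{-1},H)$.

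There is no serious obstacle here; the only points demanding care are the well-definedness of $L_g$ on cosets and the bookkeeping in the conjugation identity, both of which are routine. The one remark worth making explicit is that the topology on each Hessenberg variety is the subspace topology, so that the set-theoretic bijection produced by the cancellation identity automatically upgrades to a homeomorphism once $L_g$ is known to be a homeomorphism of the ambient $G/B$.
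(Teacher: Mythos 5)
Your proof is correct, and it is exactly the standard argument the paper has in mind: the paper states this proposition as well known and gives no proof, but the cancellation identity $(gg')^{-1}(gXg^{-1})(gg') = (g')^{-1}Xg'$ you use is the same one-line conjugation computation the paper carries out in its proof of the adjacent Lemma on the action of ${\bf t} \in T$. Your additional care about well-definedness on cosets and about the subspace topology is routine but harmless, and nothing is missing.
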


To streamline calculations in later sections, we also note that acting on the Hessenberg variety by ${\bf t} \in T$ is equivalent to conjugating $X$ by ${\bf t}^{-1}$.  

\begin{lemma}\label{dumbactioncondition} For any ${\bf t}\in T$ the flag ${\bf t}gB\in \Hess(X,H)$ if and only if $gB\in \Hess({\bf t}^{-1}X{\bf t},H).$
\end{lemma}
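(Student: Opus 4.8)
The plan is to unwind both sides of the claimed equivalence against the defining condition for membership in a Hessenberg variety and observe that they reduce to one and the same algebraic condition. By Definition of $\Hess(X,H)$, the flag ${\bf t}gB \in \Hess(X,H)$ holds exactly when $({\bf t}g)^{-1}X({\bf t}g) \in H$. Since ${\bf t}, g \in \G$, the inverse factors as $({\bf t}g)^{-1} = g^{-1}{\bf t}^{-1}$, so the conjugate rearranges to
\[
({\bf t}g)^{-1}X({\bf t}g) = g^{-1}{\bf t}^{-1}X{\bf t}g = g^{-1}\bigl({\bf t}^{-1}X{\bf t}\bigr)g.
\]
The right-hand side is precisely the element whose membership in $H$ defines $gB \in \Hess({\bf t}^{-1}X{\bf t}, H)$. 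Reading this chain of equalities in both directions yields the desired equivalence, so the core of the argument is a single line of matrix algebra.

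The one point that genuinely requires comment is that the statement only makes sense if the membership condition is well defined on cosets, i.e.\ independent of the representative $g$ chosen for the flag $gB$. First I would note that if $g' = gb$ for some $b \in B$, then $(g')^{-1}Xg' = b^{-1}(g^{-1}Xg)b$, and the $\mathfrak{b}$-stability of $H$ from Definition~\ref{definition: hess spaces} guarantees $b^{-1}Hb \subseteq H$ (the infinitesimal condition $[H,\mathfrak{b}] \subseteq H$ exponentiates to $\mathrm{Ad}(b^{-1})H \subseteq H$ for all $b \in B$). Hence $g^{-1}Xg \in H$ if and only if $(g')^{-1}Xg' \in H$, so the Hessenberg condition descends to $G/B$ and both sides of the lemma are unambiguous. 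This is the only step with any content; there is no real obstacle beyond it, since the rest is formal.

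As an alternative streamlined route, I would point out that the lemma is an immediate special case of the preceding conjugation Proposition. Applying that result with conjugating element ${\bf t}^{-1} \in \G$ gives, for any flag, the equivalence $\widetilde{g}B \in \Hess(X,H) \iff {\bf t}^{-1}\widetilde{g}B \in \Hess({\bf t}^{-1}X{\bf t}, H)$; taking $\widetilde{g} = {\bf t}g$ and simplifying ${\bf t}^{-1}{\bf t}g = g$ recovers the statement verbatim. Either presentation is complete in a few lines, and I would favor the direct computation above for transparency while remarking on this second derivation to situate the lemma relative to the general conjugation-homeomorphism already recorded.
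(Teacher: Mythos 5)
Your proof is correct and is essentially the paper's own argument: the single conjugation identity $({\bf t}g)^{-1}X({\bf t}g) = g^{-1}({\bf t}^{-1}X{\bf t})g$ read in both directions. The extra remarks on well-definedness over the coset $gB$ and the derivation from the conjugation proposition are sound but supplementary to what the paper records.
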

\begin{proof}
$gB\in \Hess({\bf t}^{-1}X{\bf t},H)$ if and only if $g^{-1}({\bf t}^{-1}X{\bf t})g\in H$ namely $({\bf t}g)^{-1}X({\bf t}g)\in H$ or equivalently ${\bf t}gB\in \Hess(X,H)$.
\end{proof}

The following definition gives a class of nilpotent operators called {\it skeletal nilpotents} that is useful for our calculations.  All nilpotent matrices in Jordan canonical form are skeletal, so every nilpotent operator is conjugate to a skeletal nilpotent matrix.  Furthermore, conjugating a skeletal nilpotent matrix by an element of $T$ gives another skeletal nilpotent matrix.
Note that any strictly upper-triangular matrix is in fact nilpotent.

\begin{definition}\label{def:Xrules}
We say that a nilpotent operator $X$ is {\it skeletal nilpotent} when
\begin{itemize}
    \item $X$ is strictly upper-triangular, and
    \item $X$ has at most one nonzero entry in each column and in each row.
\end{itemize}
If $X$ has a nonzero $i^{th}$ row, we write $X(i)$ to indicate the (unique) column such that the entry of $X$ in position $(i, X(i))$ is nonzero. If the $i^{th}$ row of $X$ is zero then we assign $X(i)=0$.
\end{definition}
\begin{example}
The matrix
\[
X = 
\begin{pmatrix}
0 & 3 & 0 & 0\\
0 & 0 & 0 & -2\\
0 & 0 & 0 & 0\\
 0& 0 & 0 & 0
\end{pmatrix}
\]
is skeletal nilpotent. 
In this case, $X(1)=2, X(2)=4,$ and $X(3)=X(4)=0.$ 
\end{example}

The skeletal nilpotent condition ensures that each nonzero row $i$ of $Xg$ is row $i'$ of $g$ scaled by the unique entry in position $(i,i')$ in $X$.  Furthermore since $X$ is strictly upper-triangular, each row of the matrix $Xg$ is either zero or a scalar multiple of a row below it in $g$.  

The next definition describes a family of skeletal nilpotent operators that we analyze extensively in what follows.

\begin{definition}
Fix $k \in \{1,\ldots,n-1\}$.  Define $F_k$ to be the matrix that is one in entries $(i,n-k+i)$ for $i=1,\ldots,k$, and zero otherwise.
\end{definition}

\begin{example} When $n=4$ the three $F_k$ are:
\[
F_{3}=
\begin{pmatrix}
0 & 1 & 0 & 0 \\
0 & 0 & 1 & 0 \\
0 & 0 & 0 & 1 \\
0 & 0 & 0 & 0
\end{pmatrix}
\hspace{1em}
F_{2}=
\begin{pmatrix}
0 & 0 & 1 & 0 \\
0 & 0 & 0 & 1 \\
0 & 0 & 0 & 0 \\
0 & 0 & 0 & 0
\end{pmatrix}
\hspace{1em}F_{1}=
\begin{pmatrix}
0 & 0 & 0 & 1 \\
0 & 0 & 0 & 0 \\
0 & 0 & 0 & 0 \\
0 & 0 & 0 & 0
\end{pmatrix}
\] 
In general $F_{n-1}$ is the only regular nilpotent matrix of the form $F_k$ for some $k$.  The matrices $F_{n-1}$ and $F_1$ are also the only matrices of the form $F_k$ with exactly one nonzero Jordan block.
\end{example}
Observe that $(F_k)^{k+1}$ is identically zero. The case $F_1$ was studied extensively by Abe and Crooks \cite{AbeCro16}.

\begin{example} \label{example: T-stable containment}
Recall the comment after Lemma~\ref{lemma: subvariety of GKM with same torus action is GKM} that a variety {\it containing} another $T$-stable variety need not be $T$-stable.  This is still true even with the additional structure of Hessenberg varieties.  Indeed, take $H=\mathfrak{b}$ and take $X=F_{n-1}$ to be a single Jordan block.  Then the Hessenberg variety $\Hess(F_{n-1},\mathfrak{b})$ is just the identity flag, which is a $T$-fixed point.  Every Hessenberg variety $\Hess(X',H')$ for which $X'$ is upper triangular contains $\Hess(F_{n-1},\mathfrak{b})$ but we show in later results (see, e.g., Corollaries~\ref{corollary:B nonzero} and~\ref{corollary: first column condition for Fk} and Theorems~\ref{theorem: conditions for T-stability} and~\ref{theorem: max subtorus on Fk}) that many Hessenberg varieties do not admit a full torus action. 
\end{example}

\section{$\mathbb C^*$-actions on $\Hess(X,H)$} \label{section: circle actions}

The question of whether a Hessenberg variety carries the structure of a GKM space relative to a subgroup of $T$ relies on it having a large enough torus action so that fixed points are isolated. In this section we just focus on the condition that a torus action produces isolated fixed points and study only rank-one subtori. We refer to the action of a one-dimensional complex torus isomorphic to $\mathbb C^*$ as a {\em $\mathbb C^*$-action}.  In this section we describe a class of Hessenberg varieties that have a $\mathbb C^*$-action with isolated fixed points, establishing a class of varieties that could potentially be GKM with respect to some (larger) subtorus of $T$. 

Consider the one-parameter subgroup of $T$ defined as follows: 
$$
S := \left\{\begin{pmatrix} t& 0 & \cdots & 0\\
0 & t^2 & \cdots & 0\\
\vdots\\
0 & 0 &\cdots & t^n
\end{pmatrix}\!,\ t\in \mathbb C^*\right\}.
$$  Note that $S$ is regular, and hence $(G/B)^S = (G/B)^T.$

The following two lemmas show that $\Hess(F_k,H)$ is $S$-stable and that in fact every skeletal nilpotent Hessenberg variety carries the action of a conjugate of $S$.

\begin{lemma}\label{lemma:Sstable} The variety $\Hess(F_k,H)$ is $S$-stable for all $k$ and $H\subset \mathfrak g$.
\end{lemma}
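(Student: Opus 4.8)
The plan is to reduce the claim to a one-line computation using Lemma~\ref{dumbactioncondition}. By that lemma, for any ${\bf t}\in S$ we have ${\bf t}gB\in \Hess(F_k,H)$ if and only if $gB\in \Hess({\bf t}^{-1}F_k{\bf t},H)$. Hence it suffices to show that $\Hess({\bf t}^{-1}F_k{\bf t},H)=\Hess(F_k,H)$ for every ${\bf t}\in S$; this immediately gives that ${\bf t}$ maps $\Hess(F_k,H)$ to itself, i.e.\ that the variety is $S$-stable.

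First I would compute the conjugate ${\bf t}^{-1}F_k{\bf t}$ explicitly. Writing ${\bf t}=\mathrm{diag}(t,t^2,\ldots,t^n)\in S$, the $(i,j)$ entry of ${\bf t}^{-1}F_k{\bf t}$ is $t^{-i}(F_k)_{ij}\,t^{j}=t^{\,j-i}(F_k)_{ij}$. The only nonzero entries of $F_k$ lie in positions $(i,n-k+i)$ for $i=1,\ldots,k$, and each of these satisfies $j-i=n-k$. Thus every nonzero entry is rescaled by the \emph{same} factor $t^{\,n-k}$, so
\[
{\bf t}^{-1}F_k{\bf t}=t^{\,n-k}F_k.
\]

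The remaining step is to observe that scaling the operator by a nonzero constant leaves the Hessenberg variety unchanged. Since $H$ is a linear subspace, for $c\neq 0$ we have $g^{-1}(cX)g\in H$ if and only if $g^{-1}Xg\in H$, so $\Hess(cX,H)=\Hess(X,H)$. Because $t\in\mathbb C^*$ the factor $t^{\,n-k}$ is nonzero, and therefore $\Hess({\bf t}^{-1}F_k{\bf t},H)=\Hess(t^{\,n-k}F_k,H)=\Hess(F_k,H)$, which completes the argument.

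There is no serious obstacle: the content of the proof is the observation that $F_k$ is an eigenvector for the conjugation action of $S$, because all its nonzero entries lie on a single superdiagonal (the positions $(i,j)$ with fixed $j-i=n-k$), so conjugating by a diagonal one-parameter subgroup rescales it uniformly rather than mixing entries. The only place the hypothesis on $H$ enters is the scalar-invariance $\Hess(cX,H)=\Hess(X,H)$, which uses that $H$ is a linear subspace; note that the argument works for \emph{every} Hessenberg space $H$ precisely because this step is insensitive to the choice of $H$.
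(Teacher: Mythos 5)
Your proof is correct and follows essentially the same route as the paper: both compute ${\bf t}^{-1}F_k{\bf t}=t^{n-k}F_k$ and then invoke Lemma~\ref{dumbactioncondition}, with your version merely spelling out the scalar-invariance step $\Hess(cX,H)=\Hess(X,H)$ that the paper leaves implicit.
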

\begin{proof}
Note that $S$ projectively stabilizes the nilpotent $X$. More specifically, for any matrix ${\bf s}\in S$ we have ${\bf s}^{-1}F_k{\bf s} = t^{n-k}F_k.$ By Lemma~\ref{dumbactioncondition} it follows that $S$ acts on $\Hess(F_k, H)$.
\end{proof}

\begin{lemma} \label{lemma: circle group for skeletal}
Suppose that $X$ is skeletal nilpotent, as defined in Definition~\ref{def:Xrules}. Then $\Hess(X,H)$ has a $\mathbb C^*$-action by $wSw^{-1}\subseteq T$ for some permutation matrix $w$.
\end{lemma}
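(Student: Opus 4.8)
The plan is to mimic the proof of Lemma~\ref{lemma:Sstable}, reducing the existence of the $\mathbb{C}^*$-action to a projective stabilizer condition on $X$. First I would record the general principle behind that lemma: if a subtorus $K \subseteq T$ satisfies $\mathbf{t}^{-1} X \mathbf{t} = c(\mathbf{t})\, X$ for some scalar $c(\mathbf{t}) \in \mathbb{C}^*$ and every $\mathbf{t} \in K$, then $K$ acts on $\Hess(X,H)$. Indeed, since $H$ is a linear subspace we have $\Hess(cX, H) = \Hess(X,H)$ for all $c \neq 0$, so Lemma~\ref{dumbactioncondition} gives that $\mathbf{t} g B \in \Hess(X,H)$ if and only if $gB \in \Hess(\mathbf{t}^{-1} X \mathbf{t}, H) = \Hess(X,H)$. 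Taking $K = wSw^{-1}$ and writing $Y = w^{-1} X w$, the computation $(w\mathbf{s}w^{-1})^{-1} X (w\mathbf{s}w^{-1}) = w(\mathbf{s}^{-1} Y \mathbf{s})w^{-1}$ shows that $wSw^{-1}$ projectively stabilizes $X$ exactly when $S$ projectively stabilizes $Y = w^{-1} X w$. So it suffices to find a permutation $w$ for which $S$ projectively stabilizes $w^{-1} X w$.

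Next I would pin down which matrices $S$ projectively stabilizes. Writing $\mathbf{s} = \mathrm{diag}(t, t^2, \ldots, t^n)$, the $(i,j)$ entry of $\mathbf{s}^{-1} Y \mathbf{s}$ is $t^{\,j-i} Y_{ij}$. Hence $\mathbf{s}^{-1} Y \mathbf{s} = t^m Y$ for some fixed $m$ precisely when every nonzero entry of $Y$ lies on the single diagonal $\{(i,j) : j - i = m\}$. Thus the goal becomes: find a permutation $w$ such that all nonzero entries of $w^{-1} X w$ lie on a common diagonal; the superdiagonal $j = i+1$ will do.

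The combinatorial heart is to produce this $w$ from the skeletal structure. By Definition~\ref{def:Xrules}, $X$ has at most one nonzero entry in each row and column and is strictly upper triangular, so the assignment $i \mapsto X(i)$ defines a directed graph on $\{1,\ldots,n\}$ with in- and out-degree at most one and with $X(i) > i$ whenever row $i$ is nonzero. This graph is therefore acyclic with every component a single increasing path (a ``chain''). I would then choose $w$ (equivalently the relabeling $w^{-1}$) that lists the chains one after another, in increasing order within each chain, assigning consecutive new indices. Under this relabeling each edge $i \to X(i)$ becomes an edge between consecutive indices, so every nonzero entry of $w^{-1} X w$ lands on the superdiagonal. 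By the previous paragraph $S$ projectively stabilizes $w^{-1} X w$ with scalar $t$, and hence $wSw^{-1} \subseteq T$---a genuine one-dimensional subtorus since $w \in W$ normalizes $T$---acts on $\Hess(X,H)$.

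I expect the only real obstacle to be the bookkeeping: checking that the functional-graph/chain decomposition is forced by the skeletal hypotheses, that relabeling the chains into consecutive blocks genuinely sends all nonzero entries onto the superdiagonal (no edges cross between blocks, and we only need the nonzero entries \emph{contained} in the diagonal, not filling it), and that the conjugation directions and the reduction $Y = w^{-1}Xw$ are tracked consistently. Everything else is the same scalar-multiple argument already used for $\Hess(F_k, H)$ in Lemma~\ref{lemma:Sstable}.
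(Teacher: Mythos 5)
Your proposal is correct and takes essentially the same approach as the paper: your chain decomposition of the functional graph $i \mapsto X(i)$ with consecutive relabeling is exactly the paper's cycle-by-cycle construction of a permutation satisfying $w(X(i)) = w(i)+1$, and your projective-stabilization principle is precisely how the paper combines Lemma~\ref{dumbactioncondition} with the identity ${\bf s}^{-1}X{\bf s} = tX$. Conjugating $X$ onto the superdiagonal and letting $S$ act, rather than conjugating $S$ and letting it scale $X$ directly, is only a cosmetic reorganization of the same computation.
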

\begin{proof}

We construct a permutation $w$ as a product of disjoint cycles to establish that the lemma holds. 

Let $\mathcal R = \{i_1 < \dots < i_k\}$ denote the set of nonzero rows of $X$ indexed in increasing order and let $\mathcal C= \{X(i_1), \dots, X(i_k)\}$ denote the corresponding set of (nonzero) columns.  Since $X$ is skeletal nilpotent, the set $\mathcal C$ consists of $k$ distinct values, possibly including some values in $\mathcal R$.

Our notational conventions mean $i_1$ is the first row in which $X$ is nonzero.  Define $w(i_1)=1$.  Since $i_1< X(i_1)$ we may choose $w(X(i_1))=w(i_1)+1=2.$ If $X(i_1)\in \mathcal R$ also indexes one of the nonzero rows of $X$,
define $w(X^2(i_1))=w(X(i_1))+1=3.$ Each time $X^j(i_1)\in \mathcal R$ we continue to define 
\[w(X^{j}(i_1)) = w(X^{j-1}(i_1))+1.\] 
Note that there are no repeated values since $i_1\neq X(i_1)$ and all the rows 
\[1, X(i_1), X^2(i_1), \ldots \]
are distinct. We stop when $X^{j_1}(i_1)\not\in \mathcal R$ for some $j_1$.

We start the next cycle with the smallest element $i_{j_2} \in \mathcal{R}$ that does not yet have a value assigned to it.  Let $w(i_{j_2}) = w(i_{j_1})+1$ and as before assign \[w(X^j(i_{j_2}) = w(X^{j-1}(i_{j_2}))+1\] 
for all $j$ with $X^j(i_{j_2}) \in \mathcal{R}$.

We continue until all elements of $\mathcal R$ and $\mathcal C$ have assigned values and then extend $w$ to the rest of $\{1,2,\ldots,n\}$ arbitrarily, e.g. so that there are no inversions within the remaining input and output. The permutation $w$ has the property that for all $i\in \mathcal R$, we have $w(X(i)) = w(i)+1$.

Observe that $wSw^{-1}$ consists of matrices ${\bf s}$ with diagonal entries 
$$(t^{w(1)}, t^{w(2)}, \ldots, t^{w(n)}).$$
The $i$th row of 
${\bf s}^{-1} X {\bf s}$  consists of  entries in the $i$th row of $X$ multiplied by $$t^{-w(i)}t^{w(X(i))} = t,$$ since $w(X(i)) = w(i)+1$. Therefore  ${\bf s}^{-1} X {\bf s}=tX$ for some $t\in \mathbb C^*$.
 
By Lemma~\ref{dumbactioncondition}, it follows that $wSw^{-1}$ acts on $\Hess(X,H)$. 
\end{proof}
The reader may already be identifying circumstances in which more than one $w$ makes the lemma true. These may be circumstances under which $\Hess(X,H)$ admits a larger torus action. However, the torus generated by $w^{-1}Sw$ and $w'^{-1}Sw'$ may be the same as that generated by $w^{-1}Sw$ and trivially acting $\mathbb C^*$ (represented by constant diagonal matrices).

The torus $wSw^{-1}$ is a regular subgroup of $T$ as well, so it acts with isolated fixed points. More formally we have the following.

\begin{corollary}\label{corollary: circle isolated fixed points}
Suppose $X$ is skeletal nilpotent and $wSw^{-1}$ is a one-dimensional group given by Lemma~\ref{lemma: circle group for skeletal}. Then $wSw^{-1}$ acts on $\Hess(X,H)$ with isolated fixed points. 
\end{corollary}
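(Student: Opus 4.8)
The plan is to deduce the isolated fixed points from the \emph{regularity} of the conjugated torus $wSw^{-1}$, exactly mirroring the observation already recorded for $S$ itself. Since Lemma~\ref{lemma: circle group for skeletal} tells us that $wSw^{-1}$ acts on $\Hess(X,H)$, a flag $gB \in \Hess(X,H)$ is fixed by $wSw^{-1}$ if and only if it is fixed by $wSw^{-1}$ as a point of $G/B$. Hence the fixed-point locus is the intersection
\[
\Hess(X,H)^{wSw^{-1}} = \Hess(X,H) \cap (G/B)^{wSw^{-1}},
\]
and it suffices to understand $(G/B)^{wSw^{-1}}$.

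First I would check that $wSw^{-1}$ is regular. As computed in the proof of Lemma~\ref{lemma: circle group for skeletal}, a general element of $wSw^{-1}$ is the diagonal matrix with entries $(t^{w(1)}, t^{w(2)}, \ldots, t^{w(n)})$. Because $w$ is a permutation, the exponents $w(1), \ldots, w(n)$ are precisely $1, 2, \ldots, n$ in some order and are therefore pairwise distinct. Thus for every root $e_i - e_j$ with $i \neq j$ the induced weight $t^{w(i) - w(j)}$ is nontrivial, which is exactly the statement that this one-parameter subgroup is regular. (Equivalently, $S$ is regular since its exponents $1, \ldots, n$ are distinct, and conjugation by the permutation matrix $w$ permutes the roots and so preserves regularity.) Invoking the standard consequence of regularity already noted for $S$ in the text, we conclude $(G/B)^{wSw^{-1}} = (G/B)^T$, and the latter is the finite set of the $n!$ permutation flags $w'B$ with $w' \in W$.

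Combining these facts, $\Hess(X,H)^{wSw^{-1}} = \Hess(X,H) \cap (G/B)^T$ is a subset of a finite set, hence finite, so the fixed points are isolated. There is no serious obstacle in this argument: the only point that warrants a moment's care is that regularity passes from $S$ to its conjugate $wSw^{-1}$, and this is immediate because the defining exponents of $wSw^{-1}$ are merely a reordering of those of $S$.
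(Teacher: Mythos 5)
Your proof is correct and follows essentially the same route as the paper: the paper likewise observes that $wSw^{-1}$ is a regular subtorus of $T$, so that $(G/B)^{wSw^{-1}}$ is finite, and then concludes that $\Hess(X,H)^{wSw^{-1}} \subseteq (G/B)^{wSw^{-1}}$ is finite. Your only addition is to spell out the regularity check (the exponents $w(1),\ldots,w(n)$ are pairwise distinct, so every root weight $t^{w(i)-w(j)}$ is nontrivial), which the paper leaves implicit.
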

\begin{proof}
Observe that that $wSw^{-1}$ is a regular subgroup of $T$.  Since the fixed points $(G/B)^{wSw^{-1}}$ are finite, so are the points in $(\Hess(X,H))^{wSw^{-1}}$.
\end{proof}

\section{Torus actions on Hessenberg varieties} \label{section: torus actions}

A number of nilpotent Hessenberg varieties are $T$-stable proper subsets of $G/B$ and therefore manifestly GKM spaces.  However, most nilpotent Hessenberg varieties are not $T$-stable. They may nonetheless be GKM spaces with respect to some smaller torus action.

In this section, we describe conditions under which $\Hess(X,H)$ is invariant under a $K$ action for a subtorus $K\subseteq T$ that we identify explicitly.  (In some cases $K$ is $T$ itself.)  The main result is Theorem~\ref{theorem: main column test}, which for each $gB \in \Hess(X,H)$ produces a system of linear equations that the entries of $g$ and of the torus $K$ must satisfy for $K$ to act on $\Hess(X,H)$.

The section is organized as follows.  Section~\ref{section: torus actions on x} begins with some results about torus actions on $\Hess(X,H)$ for specific families of $X$.  Section~\ref{section: torus actions on h} then describes torus actions on $\Hess(X,H)$ for specific $H$.  The strategies and arguments in these special cases help motivate the more general arguments in Section~\ref{section: torus actions on x and h}, which address torus actions on $\Hess(X,H)$ when both $X$ and $H$ are allowed to vary. 

\subsection{Torus actions on $\Hess(X,H)$ for $X$ skeletal nilpotent}\label{section: torus actions on x}

Suppose that $X$ is skeletal nilpotent. We establish conditions for $\Hess(X,H)$  to have a torus action without restricting the subspace $H$.

The idea of the next theorem is that conjugating $F_k$ imposes $k-1$ conditions, which can be observed by direct computation.  Our proof profits from the method of Lemma~\ref{lemma:Sstable}, with conditions that the nonzero entries after conjugation must all equal.  As we shall see, these conditions impose $k-1$  relations on $T$.

\begin{theorem}\label{thm:X action by codim k-1 torus} Let $X$ be skeletal nilpotent 
with $k$ nonzero rows for some $k\leq n$.  
For any subspace $H\subseteq \mathfrak g$ the Hessenberg variety $\Hess(X,H)$ is invariant under a codimension-$(k-1)$ subtorus $K$ of $T$.

Moreover the subtorus $K$ is defined by the rule $(t_1,t_2, \dots, t_n)\in K$ if and only if 
\begin{equation}\label{torusconstraintX}
    t_{i_1}^{-1}t_{X(i_1)} =t_{i_2}^{-1}t_{X(i_2)}=\cdots=t_{i_k}^{-1}t_{X(i_k)},
\end{equation}
where $\{i_1, i_2, \ldots, i_k\}$ is the set of nonzero rows of $X$.
\end{theorem}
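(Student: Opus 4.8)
The plan is to mimic the computation in Lemma~\ref{lemma: circle group for skeletal}, but instead of forcing all the nonzero entries of the conjugated matrix to be multiplied by a single $t$, I track the scalar on each nonzero entry separately and ask when the conjugated matrix is a scalar multiple of $X$. By Lemma~\ref{dumbactioncondition}, a torus element ${\bf t}=(t_1,\dots,t_n)\in T$ acts on $\Hess(X,H)$ precisely when ${\bf t}gB\in \Hess(X,H)$ for every $gB\in \Hess(X,H)$, which holds whenever ${\bf t}^{-1}X{\bf t}$ is again a matrix defining the same Hessenberg variety. The cleanest sufficient condition is that ${\bf t}^{-1}X{\bf t}$ be a nonzero scalar multiple of $X$, since $\Hess(cX,H)=\Hess(X,H)$ for any $c\in\mathbb{C}^*$ (scaling $X$ does not change the conjugation condition $g^{-1}Xg\in H$, as $H$ is a linear subspace).

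The key computation is direct. Because $X$ is skeletal nilpotent, its only nonzero entries sit in positions $(i_j, X(i_j))$ for the nonzero rows $i_1,\dots,i_k$. Conjugating by a diagonal ${\bf t}$ scales the $(i,m)$ entry of $X$ by $t_i^{-1}t_m$; hence the $(i_j,X(i_j))$ entry of ${\bf t}^{-1}X{\bf t}$ equals its original value times $t_{i_j}^{-1}t_{X(i_j)}$, and no new nonzero entries appear. Therefore ${\bf t}^{-1}X{\bf t}=\lambda X$ for some scalar $\lambda$ if and only if every one of these scalars equals the common value $\lambda$, which is exactly the system~\eqref{torusconstraintX}. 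This shows $K$ as defined acts on $\Hess(X,H)$.

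It remains to verify that $K$ is a subtorus of codimension $k-1$ in $T$. The defining relations~\eqref{torusconstraintX} are $k-1$ independent multiplicative constraints (equating $k$ monomials forces $k-1$ equalities). Working in the character lattice, each equality $t_{i_j}^{-1}t_{X(i_j)}=t_{i_{j+1}}^{-1}t_{X(i_{j+1})}$ is a single monomial relation, so $K$ is cut out by $k-1$ characters of $T$; I would confirm these characters are linearly independent in $\mathfrak{t}^*$ by reusing the structure from Lemma~\ref{lemma: circle group for skeletal}, where the permutation $w$ arranges the indices so that the differences $X(i_j)-i_j$ (as weight vectors $e_{X(i_j)}-e_{i_j}$) are distinct, giving independent relations. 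Independence then yields $\dim K = n-(k-1)$, i.e. codimension $k-1$.

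The main obstacle is the independence check: one must be sure the $k-1$ relations are genuinely independent rather than redundant, which requires knowing that the weights $e_{X(i_j)}-e_{i_j}$ span a $k$-dimensional space modulo the diagonal (equivalently that no nontrivial combination of the relations is vacuous). This is where the skeletal structure—at most one nonzero entry per row and per column—does the real work, since it guarantees the columns $X(i_1),\dots,X(i_k)$ are distinct and prevents the relations from collapsing. I would isolate this as the crux of the argument and handle the rest as routine bookkeeping.
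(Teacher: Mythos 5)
Your core argument coincides with the paper's proof: conjugation by a diagonal ${\bf t}$ scales the entry of $X$ in position $(i,X(i))$ by $t_i^{-1}t_{X(i)}$ and creates no new nonzero entries, so imposing the common value in \eqref{torusconstraintX} yields ${\bf t}^{-1}X{\bf t}=cX$ with $c\neq 0$, and since $H$ is a linear subspace, $g^{-1}(cX)g\in H$ if and only if $g^{-1}Xg\in H$; Lemma~\ref{dumbactioncondition} then gives $K$-stability of $\Hess(X,H)$, exactly as in the paper.

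The one place you go beyond the paper is the independence of the $k-1$ relations, and there your justification does not work as stated. Distinctness of the weight vectors $e_{X(i_j)}-e_{i_j}$ does not imply independence of the difference characters: for instance $e_3-e_1$, $e_4-e_2$, $e_4-e_1$, $e_3-e_2$ are four distinct positive roots satisfying the affine relation $(e_3-e_1)+(e_4-e_2)-(e_4-e_1)-(e_3-e_2)=0$, which would collapse the corresponding torus relations. (Nor does the permutation of Lemma~\ref{lemma: circle group for skeletal} help: the identity $w(X(i))=w(i)+1$ only exhibits a single cocharacter on which all these weights evaluate to $1$.) The correct use of the skeletal hypothesis is stronger than ``the columns are distinct'': since each row and each column of $X$ has at most one nonzero entry and $X$ is strictly upper-triangular, the pairs $(i_j, X(i_j))$ are the edges of a directed graph with in- and out-degree at most one and no cycles, i.e.\ a disjoint union of paths. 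Edge vectors of such a forest are linearly independent, so the $v_j=e_{X(i_j)}-e_{i_j}$ are linearly independent, hence the $k-1$ characters $v_j-v_{j+1}$ cutting out $K$ are independent and the codimension is exactly $k-1$. In fairness, the paper's own proof also elides this point (it says only that there are ``up to $k-1$ relations'' and speaks of ``the maximal case''), so you correctly located the crux; you just need the forest argument rather than distinctness to close it.
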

\begin{proof}
Let ${\bf t}=(t_1, \dots, t_n)\in T$ be a generic element indicated by its diagonal matrix entries. We note that ${\bf t}^{-1}X{\bf t}$ has nonzero entries only in the positions that $X$ does. If the entry of $X$ in position $(i, X(i))$ is $1$ then the  entry of ${\bf t}^{-1}X{\bf t}$ in position $(i, X(i))$ is $t_i^{-1}t_{X(i)}$. 

Impose the condition that all these entries have the same value $c$.  This gives
$${\bf t}^{-1}X{\bf t}=cX.$$
Thus for each subspace $H$ we have $({\bf t}g)^{-1}X({\bf t}g)=c g^{-1}Xg \in H$ if and only if $g^{-1}Xg\in H$. It follows that for each subspace $H$ we have $gB\in \Hess(X,H)$ if and only if ${\bf t}gB\in \Hess(X,H)$. By hypothesis, there are at most $k$ nonzero rows of $X$ and so there are
 up to $k-1$ relations given by \eqref{torusconstraintX}.  In the maximal case, there is a codimension-$k-1$ torus $K\subset T$ of elements that satisfy the conditions of \eqref{torusconstraintX}.
\end{proof}

When we take $X=F_k$  in Theorem~\ref{thm:X action by codim k-1 torus}, we obtain the following corollary.

\begin{corollary}
For all $H\subseteq \mathfrak g$, the variety $\Hess(F_k,H)$ is $K$-stable for the codimension $k-1$ torus $K\subseteq T$ given by 
\[\frac{t_{1+(n-k)}}{t_1} = \frac{t_{2+(n-k)}}{t_2} = \cdots = \frac{t_n}{t_k}.\]
In particular, the variety $\Hess(F_1, H)$ is $T$-stable.
\end{corollary}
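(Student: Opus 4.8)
The plan is to apply Theorem~\ref{thm:X action by codim k-1 torus} directly with $X = F_k$, so that the entire argument reduces to verifying that $F_k$ meets the hypotheses and then translating the abstract constraint \eqref{torusconstraintX} into the displayed equalities. First I would check that $F_k$ is skeletal nilpotent: its nonzero entries sit in positions $(i, n-k+i)$ for $i = 1, \ldots, k$, which lie strictly above the diagonal (since $n-k+i > i$ whenever $k < n$) and occupy $k$ distinct rows and $k$ distinct columns, so exactly one nonzero entry appears in each row and in each column. Hence $F_k$ is skeletal nilpotent with precisely $k$ nonzero rows, namely rows $1, 2, \ldots, k$, and Theorem~\ref{thm:X action by codim k-1 torus} applies to produce a codimension-$(k-1)$ subtorus $K \subseteq T$ stabilizing $\Hess(F_k, H)$ for every Hessenberg space $H$.

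Next I would read off the function $X(\cdot)$ for $X = F_k$. Since the nonzero rows are $i_j = j$ for $j = 1, \ldots, k$, and the unique nonzero entry in row $j$ lies in column $n-k+j$, we have $X(i_j) = n-k+j$. Substituting $i_j = j$ and $X(i_j) = n-k+j$ into \eqref{torusconstraintX} yields the chain of equalities
\[ t_1^{-1} t_{n-k+1} = t_2^{-1} t_{n-k+2} = \cdots = t_k^{-1} t_n, \]
which is exactly the displayed system defining $K$ once each term $t_j^{-1} t_{n-k+j}$ is rewritten as the ratio $t_{j+(n-k)}/t_j$.

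Finally, for the $F_1$ assertion I would specialize to $k = 1$: in this case $k - 1 = 0$, so there are no relations imposed by \eqref{torusconstraintX} and $K = T$, whence $\Hess(F_1, H)$ is $T$-stable for all $H$. There is no substantive obstacle here; the argument is a bookkeeping specialization of the theorem. The only point that requires any care is the indexing---confirming that $F_k(i) = n-k+i$ aligns with the claimed ratios $t_{i+(n-k)}/t_i$---and the substitution above settles precisely this.
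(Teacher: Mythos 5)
Your proposal is correct and takes exactly the route the paper intends: the paper presents this corollary as an immediate specialization of Theorem~\ref{thm:X action by codim k-1 torus} with $X=F_k$, giving no separate proof, and your verification that $F_k$ is skeletal nilpotent with nonzero rows $1,\ldots,k$ and $X(i)=n-k+i$, followed by substitution into \eqref{torusconstraintX} and the $k=1$ specialization for $F_1$, supplies precisely the bookkeeping the paper leaves implicit.
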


We have not shown that a larger torus does {\it not} act, and there are obvious circumstances in which it does. For example, if $H=\mathfrak g$ then the full torus $T$ acts on $\Hess(X, H)=G/B$ for all $X$.  A larger torus could also act if the condition $gB \in \Hess(X,H)$ implied some relations in Equation~\eqref{torusconstraintX} never arise, for instance if all $gB \in \Hess(X,H)$ are zero in entries that would otherwise be scaled by $t_i^{-1}t_{X(i)}$.

In the next section, we examine how varying the subspace $H$  can produce a larger class of $T$-stable Hessenberg varieties, all of which are GKM spaces.

\subsection{Torus actions on $\Hess(F_k,H)$ as $H$ varies} \label{section: torus actions on h}

The case when $X=F_k$ provides a model for the more general skeletal nilpotent case. When we restrict $H$ to specific shapes, we can often identify the largest torus subgroup of $T$ under which $\Hess(F_k,H)$ is invariant.

\begin{theorem}\label{thm:Hn-1} 
Consider the variety $\Hess(F_k, H)$ with $h(j)=n-1$ for all $j<n$ and $h(n)=n$. Then $\Hess(F_k, H)$ is a $T$-stable subvariety of $G/B$.
\end{theorem}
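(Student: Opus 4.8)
The plan is to avoid the torus-constraint computation of Theorem~\ref{thm:X action by codim k-1 torus} altogether and instead show that, for this particular $H$, the defining Hessenberg conditions collapse to a single incidence condition on the hyperplane $V_g^{n-1}$ that is manifestly $T$-invariant. First I would unwind the geometric form of the conditions, $F_k V_g^i \subseteq V_g^{h(i)}$. Since $h(n)=n$ the condition at $i=n$ is vacuous, and for $i\le n-1$ we have $h(i)=n-1$. Because $V_g^i \subseteq V_g^{n-1}$, all of these are simultaneously implied by (and include, at $i=n-1$) the single requirement
\[
F_k V_g^{n-1} \subseteq V_g^{n-1}.
\]
Thus $\Hess(F_k,H)$ is exactly the set of flags whose hyperplane $V_g^{n-1}$ is $F_k$-invariant.

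The heart of the argument is then a clean description of which hyperplanes $W \subseteq \mathbb C^n$ are $F_k$-invariant. I would prove that $F_k W \subseteq W$ if and only if $\operatorname{im}(F_k) \subseteq W$. The computation $F_k e_{n-k+i} = e_i$ identifies $\operatorname{im}(F_k)$ with the coordinate subspace $\operatorname{span}(e_1,\ldots,e_k)$, so this says $W$ is $F_k$-invariant precisely when $\operatorname{span}(e_1,\ldots,e_k)\subseteq W$. The direction ($\Leftarrow$) is immediate, since $F_k W \subseteq \operatorname{im}(F_k) \subseteq W$. For ($\Rightarrow$), invariance lets $F_k$ descend to a linear map on the one-dimensional quotient $\mathbb C^n/W$; as $F_k$ is nilpotent this induced map is a nilpotent scalar, hence zero, forcing $\operatorname{im}(F_k)\subseteq W$.

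Finally I would conclude $T$-stability. Combining the two steps, $\Hess(F_k,H)$ is the set of flags with $\operatorname{span}(e_1,\ldots,e_k) \subseteq V_g^{n-1}$. Since $\operatorname{span}(e_1,\ldots,e_k)$ is a coordinate subspace it is preserved by every ${\bf t}\in T$, and the $(n-1)$-dimensional part of the flag ${\bf t}gB$ is ${\bf t}V_g^{n-1}$; hence $\operatorname{span}(e_1,\ldots,e_k)\subseteq V_g^{n-1}$ implies $\operatorname{span}(e_1,\ldots,e_k) = {\bf t}\operatorname{span}(e_1,\ldots,e_k)\subseteq {\bf t}V_g^{n-1}$, so ${\bf t}gB\in\Hess(F_k,H)$. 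Equivalently, one could phrase this through Lemma~\ref{dumbactioncondition}: since $\operatorname{im}({\bf t}^{-1}F_k{\bf t}) = {\bf t}^{-1}\operatorname{im}(F_k) = \operatorname{im}(F_k)$, the operators $F_k$ and ${\bf t}^{-1}F_k{\bf t}$ have the same invariant hyperplanes and hence define the same Hessenberg variety.

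The only genuine obstacle is the forward direction of the hyperplane characterization, where one must convert the \emph{invariance} hypothesis into the \emph{containment} $\operatorname{im}(F_k)\subseteq W$; once nilpotency is used to kill the induced endomorphism of the one-dimensional quotient, every remaining step is formal. The reason the full torus acts here, in contrast to the generic codimension-$(k-1)$ situation of Theorem~\ref{thm:X action by codim k-1 torus}, is precisely that the relevant incidence condition sees only $\operatorname{im}(F_k)$, which is a $T$-stable coordinate subspace, so the scalings $t_i^{-1}t_{n-k+i}$ never intervene.
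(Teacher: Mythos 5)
Your proposal is correct, and it reaches the result by a genuinely different route than the paper. The paper's proof is a pivot computation: with $g$ in normalized Schubert form it splits on $piv(g_n)$, rules out $piv(g_n)\le k$ by contradiction (the vector $F_kg_j$ would have pivot row $i\le k$ and lie in $V_g^{n-1}$, yet the only column of $g$ with pivot in row $i$ is the last one), concludes $\langle e_1,\ldots,e_k\rangle\subseteq V_g^{n-1}$ in the remaining case, and then verifies the Hessenberg conditions directly for the conjugate ${\bf t}^{-1}F_k{\bf t}$, whose nonzero columns sit in the same positions. You instead collapse all the conditions to the single invariance $F_kV_g^{n-1}\subseteq V_g^{n-1}$ and prove the coordinate-free lemma that a nilpotent operator preserves a hyperplane $W$ if and only if its image lies in $W$, by killing the induced nilpotent endomorphism of the one-dimensional quotient $\mathbb{C}^n/W$. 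That quotient step is exactly where the hypothesis $h(j)=n-1$ for $j<n$ enters: for invariant subspaces of higher codimension the reduction from invariance to containment of the image fails, consistent with Theorem~\ref{theorem: conditions for T-stability}, which shows $T$-stability already breaks for slightly larger $H$. Your route buys the sharper identification $\Hess(F_k,H)=\{gB:\langle e_1,\ldots,e_k\rangle\subseteq V_g^{n-1}\}$, from which $T$-stability is immediate; since $\langle e_1,\ldots,e_k\rangle$ is moreover $B$-stable, it also makes the paper's follow-up remark (that this variety is a union of Schubert varieties) and Remark~\ref{rmk: proper subsets of G/B} (that it is a proper subvariety of $G/B$) transparent; and it extends verbatim to any nilpotent $X$ whose image is a coordinate subspace---in particular to every skeletal nilpotent---yielding in one stroke the $T$-stability invoked in Section~\ref{section: gkm spaces}. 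What the paper's computation buys is methodological continuity: its column-and-pivot bookkeeping is the template for the general linear-system machinery of Theorem~\ref{theorem: main column test}, which is needed precisely in the regimes where only a proper subtorus acts and your hyperplane argument has no analogue.
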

\begin{proof}
We know $g\in \Hess(F_k, H)$ if and only if $g$ satisfies the Hessenberg conditions, i.e. $F_kg_j\in V_g^{h(j)}$ for all columns $j$. By the assumptions on $h$, we know $F_kg_j\in V_g^{n-1}$ for $j\neq n$. Note that $F_kg_j=0$ unless $j =n-k+1, \dots, n$. 

The matrix $X={\bf t}^{-1}F_k{\bf t}$ has the same nonzero columns as $F_k$. 
We need to show that $Xg_j\in V_g^{n-1}$for $j<n$.  If $piv(g_n) = i$ for some $i=1, \dots, k$, then there is some $g_j \neq g_n$ such that $piv(Xg_j)=i$. But then $j<n$ and hence $h(j)=n-1$. By the Hessenberg condition $Xg_j \in V_g^{n-1}$.  However the only column of the matrix $g$ with a pivot in the $i$th row is the last column, resulting in a contradiction.

If $piv(g_n)=i$ for $i>k$ then the columns of $g$ with pivots in positions $1, 2,\dots, k$ must be in $V_g^{n-1}$ and hence $\langle e_1, \dots, e_k\rangle \subseteq V_g^{n-1}$. Since $Xg_j\in \langle e_1, \dots, e_k\rangle$ we obtain $Xg_j\in V_g^{n-1}$ for $j<n$. When $j=n$, the Hessenberg conditions are trivially satisfied.  
\end{proof}

\begin{remark}\label{rmk: proper subsets of G/B}
Though the Hessenberg variety $\Hess(F_k, H)$ with $h(j)=n-1$ for all $j< n$ and $h(n)=n$ is GKM, it is not the entire flag variety. In particular, if the last column of $g$ has a pivot in row $1, 2,\dots, k$, then $gB\not \in \Hess(X, H)$.  We  established that the vectors $e_1, \dots, e_k$ are in the span of the first $n-1$ columns of $g$ so the last column's pivot cannot be in the first $k$ rows. 
\end{remark}

Theorem~\ref{thm:Hn-1} does not hold when $H$ is slightly larger.  We will prove in Theorem~\ref{theorem: conditions for T-stability} that $\Hess(F_2,H)$ does not admit the full torus action for any $H$ that have an $m$ with $1<m<n$ so that $h(m)=n$ and $h(\ell)=n-1$ for $\ell<m$.  

In Theorem~\ref{theorem: main column test}, we generalize these ideas to arbitrary $X$ and $H$.

\begin{remark} Let $\Hess(F_k, H)$ be the Hessenberg in which $h(j)=n-1$ for $j<n$ and $h(n)=n$. Then we can see the Hessenberg variety $\Hess(F_k, H)$ is in fact a union of Schubert varieties. Suppose $gB\in \Hess(F_k,H)$. The Schubert cell containing $g$ consists of those $g'B$ which have the same pivots as $g$. 
If $g'$ has the same pivots as $g$ then by the same argument as in Theorem~\ref{thm:Hn-1} each vector $Xg'_j$ is in $V_g^{n-1}$ for $j<n$ and thus satisfies the Hessenberg conditions. It follows that each $\Hess(F_k, H)$ is a union of Schubert cells. Since $\Hess(F_k,H)$ is closed it contains the closure of these Schubert cells, and hence is itself a union of Schubert varieties.
\end{remark}

\subsection{Torus actions on $\Hess(X,H)$ as $X$ and $H$ vary} \label{section: torus actions on x and h}

Motivated by the kinds of arguments that appear in the special cases that precede this section, we now generalize both $X$ and $H$. Our main theorem identifies constraints on the entries of $g$ and ${\bf t}$ when tori act on Hessenberg varieties.  We begin with a definition to establish notational conventions.

\begin{definition}
Let $M$ be an $n \times m$ matrix each of whose columns has a pivot, with nonzero entries only in positions both to the left and above a pivot.  Let $\Vec{w}$ be a nonzero $n \times 1$ column vector with pivot in row $\ell$.  

Suppose $M$ has exactly $k$ columns with pivots in rows $1, 2, \ldots, \ell$.  Denote the index set of these columns by $\mathcal{C}$ and the index set of their pivot rows by $\mathcal{R}\subseteq \{1, 2,\ldots, \ell\}$. Then 
\begin{itemize}
    \item the $k \times k$ {\it pivot matrix} of the system $M|\Vec{w}$ is
    \[A = \left( m_{ij}: i \in \mathcal{R}, j \in \mathcal{C} \right), \]
    \item the $(\ell-k) \times k$ {\it dependent matrix}  of the system $M|\Vec{w}$ is
    \[B = \left( m_{ij}: i \not \in \mathcal{R} \textup{ and also } i \leq \ell, j \in \mathcal{C} \right),\]
    \item the $k \times 1$ {\it solution vector}  of the system $M|\Vec{w}$ is
    \[\Vec{v} = ( w_i )_{i \in \mathcal{R}}, \mbox{ and}\]
    \item the $(\ell - k) \times 1$ {\it constraint vector} of the system $M|\Vec{w}$ is
    \[\Vec{v}' = ( w_i )_{i \not \in \mathcal{R}}.\]
\end{itemize}

We often refer to {\it the pivot matrix} without specifying the system if it is clear from context, and similarly for the others. 
\end{definition}

The following lemma establishes basic linear algebra relations between the pivot matrix, dependent matrix, solution vector, and constraint vector.

\begin{lemma}\label{lemma:pivot dependent solution constraint}
The linear system $M | \Vec{w}$ can be transformed into an equivalent system
\[
\left(\begin{array}{c|c||c}
{\Large{A}}
& \begin{array}{c} \\ \makebox[0.4in]{\Large{*}} \\ \hspace{1em} \end{array} & \Vec{v} \\
\cdashline{1-3} \begin{array}{c} \\ 
\makebox[0.4in]{\Large{B}} \\ 
\hspace{1em} \end{array} & {\Large{*}} & \Vec{v}' \\
\cline{1-3} {\Huge{0}} & 
\begin{array}{c} \\
 \makebox[0.4in]{\Large{C}} \\ 
 \hspace{1em} \end{array} &  \Vec{0}^T
\end{array} \right)
\]
by a sequence of column transpositions and row transpositions (equivalently right-multiplication by one permutation matrix and left-multiplication by another).    

The pivot matrix $A$ is invertible.  If the system $M|\Vec{w}$ has a solution then the solution is unique.  In this case, the unique solution to the equivalent system is $A^{-1} \Vec{v}$ followed by $m-k$ zeros, and the constraint vector satisfies
\[
BA^{-1}\Vec{v} = \Vec{v}'.
\]
\end{lemma}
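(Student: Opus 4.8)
The plan is to exhibit explicit permutation matrices that bring $M|\vec{w}$ into the claimed block form, then read off each conclusion from that normal form. First I would construct the row permutation. The rows $1,2,\ldots,\ell$ are partitioned into the pivot rows $\mathcal{R}$ and the non-pivot rows $\{1,\ldots,\ell\}\setminus\mathcal{R}$; the remaining rows are $\{\ell+1,\ldots,n\}$. I would left-multiply by the permutation $P$ that lists the $k$ rows of $\mathcal{R}$ first (in some order), then the $\ell-k$ rows of $\{1,\ldots,\ell\}\setminus\mathcal{R}$, then the rows below $\ell$. Because $M$ has nonzero entries only above and to the left of each pivot, and $\vec{w}$ has its pivot in row $\ell$, every row strictly below row $\ell$ is identically zero in the $\mathcal{C}$-columns; this produces the bottom $\mathbf{0}$ block under $A$ and under $B$, and the entry $\vec{0}^T$ in the solution column below $\vec{v}'$. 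Simultaneously I would right-multiply by the permutation $Q$ listing the $k$ columns of $\mathcal{C}$ first, so that the $A,B$ blocks occupy the left $k$ columns and the remaining $m-k$ columns are collected into the starred blocks and the matrix $C$.

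Next I would argue that $A$ is invertible. The columns indexed by $\mathcal{C}$ have pivots precisely in the rows indexed by $\mathcal{R}$, so after ordering both by increasing pivot row, the matrix $A$ is lower-triangular (a pivot in the $w(i)$ row with zeros below it, in the language of normalized Schubert form) with nonzero diagonal entries equal to the pivot values. Hence $\det A \neq 0$. This is the cleanest route: choose the orderings within $P$ and $Q$ to be increasing in pivot position, so that triangularity is automatic from the pivot hypothesis on $M$.

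With the block form in hand, the solvability and uniqueness claims follow by back-substitution. A solution $\vec{x}$ to $M\vec{x}=\vec{w}$ corresponds, after the column permutation $Q$, to a vector $\vec{y}=Q^{-1}\vec{x}$; the top $k$ block-rows read $A\vec{y}_{1:k} + (*)\,\vec{y}_{k+1:m} = \vec{v}$, but I would observe that the starred blocks in the top and middle strips involve only columns whose pivots lie strictly above row $\ell$ and to the left, so restricting to the $\mathcal{C}$-columns and using that these are exactly the columns with pivots in rows $1,\ldots,\ell$ forces the contribution of the non-$\mathcal{C}$ columns into rows that are constrained separately. The essential point is that the first $k$ equations determine $\vec{y}_{1:k}=A^{-1}\vec{v}$ uniquely, so the unique solution is $A^{-1}\vec{v}$ padded by $m-k$ zeros; substituting into the middle $\ell-k$ equations yields exactly the compatibility relation $BA^{-1}\vec{v}=\vec{v}'$, which must hold whenever a solution exists.

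The main obstacle I expect is handling the starred blocks carefully: I need to confirm that the columns outside $\mathcal{C}$ do not interfere with the top-$k$ rows in a way that would break the claim that $\vec{y}_{1:k}=A^{-1}\vec{v}$ and that the padding zeros genuinely solve the system. This amounts to checking that every column of $M$ not in $\mathcal{C}$ has its pivot either in a row above $\ell$ already accounted for, or in rows below, and that the pivot structure (nonzero entries only above and to the left of pivots) prevents such a column from placing a nonzero entry in the pivot rows $\mathcal{R}$ in a position that would alter the triangular solve. I would resolve this by invoking the defining hypothesis on $M$ together with the count ``$M$ has exactly $k$ columns with pivots in rows $1,\ldots,\ell$'': any column outside $\mathcal{C}$ has pivot in a row $>\ell$, hence contributes zero to all of rows $1,\ldots,\ell$, so the entire solve decouples onto the $\mathcal{C}$-columns and the uniqueness and constraint statements follow cleanly.
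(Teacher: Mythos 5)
Your overall architecture matches the paper's proof (permute rows and columns into the block form, invertibility of $A$ from the pivot structure, then a back-substitution argument), but the step you use to ``resolve'' your main obstacle is false, and it is exactly the crux. You claim that any column outside $\mathcal{C}$ has its pivot in a row $>\ell$ and \emph{hence} contributes zero to all of rows $1,\ldots,\ell$. The pivot is the \emph{lowest} nonzero row of a column; entries \emph{above} the pivot are unconstrained, so a column with pivot below row $\ell$ can perfectly well be nonzero throughout rows $1,\ldots,\ell$. If your claim were true, the starred blocks in the displayed block form would be zero blocks---and the paper's own Example~\ref{example: matrices A and B in Hessenberg conditions} shows they are not: in the system $\langle g_1, g_2, g_3 \mid Xg_1\rangle$ for $M_2$ one has $\mathcal{C}=\{2,3\}$, $\ell=2$, while the excluded first column carries the (generically nonzero) entries $a_{11},a_{21}$ in rows $1,2$. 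Consequently your assertion that the first $k$ equations alone determine $\vec{y}_{1:k}=A^{-1}\vec{v}$ does not follow: rows $\mathcal{R}$ read $A\vec{y}_{1:k}+(*)\,\vec{y}_{k+1:m}=\vec{v}$, and the starred block genuinely couples the unknowns.

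The correct route---the one the paper takes---is to kill the \emph{coefficients}, not the entries. Since $\vec{w}$ has pivot in row $\ell$, it vanishes in every row $>\ell$, while the last $m-k$ columns have their single pivots in distinct rows $>\ell$ (this uses the echelon structure of the block $C$, which you set up but never exploit). If some coefficient $y_j$ with $j>k$ were nonzero, take the largest pivot row $r>\ell$ occurring among the columns with nonzero coefficient: row $r$ of $M\vec{y}$ is then a nonzero multiple of a single pivot entry, contradicting $w_r=0$. Hence $\vec{y}_{k+1:m}=0$, and only \emph{then} does the system decouple onto the $\mathcal{C}$-columns, giving $A\vec{y}_{1:k}=\vec{v}$ on rows $\mathcal{R}$ and $B\vec{y}_{1:k}=\vec{v}'$ on the remaining rows $\leq \ell$, whence $BA^{-1}\vec{v}=\vec{v}'$. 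Two minor further corrections: with rows and columns ordered by increasing pivot position the zeros in $A$ fall \emph{below} the diagonal, so $A$ is upper-, not lower-, triangular (the argument itself is fine); and your middle paragraph's claim that the starred blocks involve columns whose pivots lie ``strictly above row $\ell$'' has the inequality backwards---those pivots lie strictly below row $\ell$, which is precisely why the displacement argument above is needed.
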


\begin{proof}
By construction, we can cyclically permute the columns of $M|\Vec{w}$ one at a time so that those in positions $\mathcal{C}$ move to the first $k$ columns, and similarly for the rows.  These operations are equivalent to right-multiplication by an $m \times m$ permutation matrix and left-multiplication by an $n \times n$ permutation matrix.  (In fact, the $n \times n$ row permutation fixed rows $\ell+1, \ell+2, \ldots, n$ but we do not need this detail.  The last column of the system in the statement of the Lemma is the output when this row permutation is applied to $\Vec{w}$.)

Also by construction, the matrix $A$ has one pivot in each row and one pivot in each column, so it is invertible.  Since $M$ has one pivot in each column, the last $m-k$ columns also have exactly one pivot each.  Furthermore, they are all in the submatrix labeled $C$ since otherwise their pivots would be in the first $\ell$ rows.  Thus the system has at most one solution for any $\Vec{w}$.

Finally, if there is a solution to this system then it must be $A^{-1} \Vec{v}$ because none of the last $m-k$ columns can contribute without creating a pivot in a row greater than $\ell$.  Thus $BA^{-1}\Vec{v} = \Vec{v}'$ which proves the claim.
\end{proof}

Observe that, for $g$ in normalized Schubert form, $Xg_j$ satisfies the Hessenberg conditions of Definition~\ref{Hessenbergconditions} if and only if there is a solution to the linear system  
\begin{equation}\label{eq: Hessenberg conditions}
    \langle g_1, g_2, \dots, g_{h(j)}| Xg_j\rangle.
\end{equation}
We denote the pivot matrix in the corresponding equivalent system 
by $A_j$, and the dependent matrix of the system by $B_j$. Denote the pivot row of $Xg_j$ by $\ell_j$. Then $gB\in \Hess(X,H)$ if and only if \eqref{eq: Hessenberg conditions} has a solution for each $j=1, \dots n$,  which occurs if and only if 
\begin{equation}\label{eq: matrix Hessenberg conditions}
    B_j A_j^{-1}\vec v_j = \vec v'_j
\end{equation}
for all $j$, where $\vec v_j$ is the solution vector and $\vec v'_j$  is the constraint vector obtained from entries of $Xg_j$.

\begin{example}\label{example: matrices A and B in Hessenberg conditions}
Let $h=(3,3,4,4)$, and $X=F_2$. Consider the linear system $\langle g_1, g_2, \dots, g_{h(j)}| Xg_j\rangle$
for $j=1$,
for each Schubert cell of Example~\ref{example: schubert cells}.
The final column is $Xg_1$ and the matrix $C$ consists of the boxed entries in the bottom left in each case.
\[
\left\langle \begin{array}{ccc|c} a_{11} & a_{12} & 1 & 1 \\ \cline{1-2} a_{21} & \multicolumn{1}{c|}{a_{22}} & 0 & 0 \\ 1 & \multicolumn{1}{c|}{0} & 0 & 0 \\ 0 & \multicolumn{1}{c|}{1} & 0 & 0 \end{array} \right\rangle \hspace{1em}
\left\langle \begin{array}{ccc|c} a_{11} & 1 & 0 & a_{31} \\ a_{21} & 0 & 1 & 1 \\ \cline{1-1} \multicolumn{1}{c|}{a_{31}} & 0 & 0 & 0 \\ \multicolumn{1}{c|}{1} & 0 & 0 & 0 \end{array} \right\rangle \hspace{1em}
\left\langle \begin{array}{ccc|c} a_{11} & a_{12} & a_{13} & a_{31} \\ a_{21} & a_{22} & 1 & 1 \\ \cline{1-2} a_{31} & \multicolumn{1}{c|}{1} & 0 & 0 \\ 1 & \multicolumn{1}{c|}{0} & 0 & 0 \\ \end{array} \right\rangle
\]
In the first case, $A$ is the $1 \times 1$ identity matrix and $B$ is the empty matrix, meaning the $0 \times 0$ matrix with no entries.  In the second case, $A$ is the $2 \times 2$ identity matrix and---again---the matrix $B$ is the empty matrix. In the third case, $A$ is the $1 \times 1$ identity matrix, $B$ is the $1 \times 1$ matrix $a_{13}$.

By contrast, note that the third column $Xg_3$ in each case is zero.  This means that both $A$ and $B$ are empty matrices.
\end{example}

The next theorem is the core of the main results on torus actions on Hessenberg varieties that follow.  
We use the linear system described in \eqref{eq: matrix Hessenberg conditions} associated to elements $({\bf t}^{-1}X{\bf t})g$ for ${\bf t}\in T$ to produce conditions under which $gB\in \Hess(X,H)$ implies ${\bf t} gB\in \Hess(X,H)$.

\begin{theorem} \label{theorem: main column test}
Suppose that $g$ is a matrix in normalized Schubert form and $X$ is skeletal nilpotent.
Suppose $gB$ is in $\Hess(X,H)$ and that ${\bf t} \in T$.  Then ${\bf t}gB \in \Hess(X,H)$ only if the entries of $g$ and of ${\bf t}$ satisfy the following conditions: For each $j$, 
\begin{equation} \label{equation: conditions on entries}
 B_jA_j^{-1} {\bf s}_j \Vec{v}_j  =  {\bf s'}_j B_jA_j^{-1} \Vec{v}_j 
\end{equation}
where 
\begin{itemize}
    \item the vector $\Vec{v}_j$ is the solution vector and the matrices $A_j$ and $B_j$ are the pivot and dependent matrix of the system $\left( g_1 g_2 \cdots g_{h(j)} | Xg_j \right)$ obtained from the first $h(j)$ columns of $g$ together with the image $Xg_j$, and
    \item the tori ${\bf s}_j = \left(\frac{t_{X(i)}}{t_i}: i \in \mathcal{R}_j\right)$ and ${\bf s'}_j = \left(\frac{t_{X(i)}}{t_i}: i \not \in \mathcal{R}_j\right)$ for $X(i)$ the unique column in row $i$ in which $X$ is nonzero, and $\mathcal R_j$ the set of pivot rows in the first $piv(Xg_j)$ rows of $\left( g_1 g_2 \cdots g_{h(j)} | Xg_j \right)$.
\end{itemize}
\end{theorem}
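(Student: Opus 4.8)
The plan is to reduce the statement to the Hessenberg conditions for the conjugated operator and then exploit the fact that conjugation by a torus element merely rescales the entries of each image vector without disturbing its pivot. First I would invoke Lemma~\ref{dumbactioncondition} to replace ``${\bf t}gB \in \Hess(X,H)$'' by the equivalent condition ``$gB \in \Hess(Y,H)$'' where $Y := {\bf t}^{-1}X{\bf t}$. Because $X$ is skeletal nilpotent, $Y$ has exactly the same support: its only nonzero entry in row $i$ sits in column $X(i)$, now equal to $\frac{t_{X(i)}}{t_i}X_{i,X(i)}$. A one-line computation then gives, column by column, $(Yg_j)_i = \frac{t_{X(i)}}{t_i}(Xg_j)_i$ for every nonzero row $i$ of $X$, with both sides vanishing for the zero rows of $X$.

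The key observation is that the scalars $\frac{t_{X(i)}}{t_i}$ are nonzero, so $Yg_j$ and $Xg_j$ have identical supports; in particular $piv(Yg_j) = piv(Xg_j) = \ell_j$. Consequently the two systems $\langle g_1,\ldots,g_{h(j)} \mid Xg_j\rangle$ and $\langle g_1,\ldots,g_{h(j)}\mid Yg_j\rangle$ share the same column index set, the same pivot-row set $\mathcal{R}_j$, and hence the \emph{very same} pivot matrix $A_j$ and dependent matrix $B_j$; only the solution and constraint vectors change. Reading these off against the entrywise scaling above, the solution vector of the $Y$-system is ${\bf s}_j\vec{v}_j$ and its constraint vector is ${\bf s}'_j\vec{v}'_j$, where $\vec{v}_j,\vec{v}'_j$ are the solution and constraint vectors of the $X$-system and ${\bf s}_j,{\bf s}'_j$ are the diagonal torus elements defined in the statement.

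Next I would apply Lemma~\ref{lemma:pivot dependent solution constraint} to the $Y$-system: since $A_j$ is invertible, that system has a solution exactly when $B_jA_j^{-1}({\bf s}_j\vec{v}_j) = {\bf s}'_j\vec{v}'_j$. Thus $gB \in \Hess(Y,H)$ iff this equality holds for every $j$, where columns with $Xg_j=0$ give empty matrices and a vacuous $0=0$ condition. Finally I would feed in the hypothesis $gB\in \Hess(X,H)$, which by the same lemma says $\vec{v}'_j = B_jA_j^{-1}\vec{v}_j$, and substitute it into the right-hand side to obtain
\[
B_jA_j^{-1}{\bf s}_j\vec{v}_j = {\bf s}'_j B_jA_j^{-1}\vec{v}_j,
\]
which is exactly Equation~\eqref{equation: conditions on entries}. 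Since ${\bf t}gB \in \Hess(X,H)$ forces $gB \in \Hess(Y,H)$, this yields the claimed necessary condition (and in fact, given $gB\in\Hess(X,H)$, an equivalence).

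I expect the only genuine subtlety to be the bookkeeping that confirms $A_j$ and $B_j$ are truly unchanged on passing from $X$ to $Y$ — this rests entirely on the preservation of $piv(Xg_j)$ under the nonzero rescaling — while everything else is a direct translation through Lemmas~\ref{dumbactioncondition} and~\ref{lemma:pivot dependent solution constraint}. I would also take care to dispatch the degenerate columns with $Xg_j=0$ explicitly, though there the conclusion is immediate.
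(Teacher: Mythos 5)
Your proposal is correct and follows essentially the same route as the paper: reduce via Lemma~\ref{dumbactioncondition} to $gB \in \Hess({\bf t}^{-1}X{\bf t},H)$, observe that conjugation only rescales the entries of $Xg_j$ so the augmented system changes solely in its last column, apply Lemma~\ref{lemma:pivot dependent solution constraint} to both systems, and combine the two resulting equations. Your explicit verification that $piv(({\bf t}^{-1}X{\bf t})g_j) = piv(Xg_j)$, so that $A_j$, $B_j$, and $\mathcal{R}_j$ are genuinely unchanged, is a point the paper's proof leaves implicit, but it is the same argument.
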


\begin{proof}
Begin by considering the $j^{th}$ column $g_j$ of $g$. The relation $Xg \in gH$ holds if and only if $Xg_j$ is in the span of $\{g_1, \ldots, g_{h(j)}\}$ for each $j$.  

To find {\it how} to write $Xg_j$ as a span of the first $h(j)$ columns of $g$ we solve the augmented matrix $\left( g_1 g_2 \cdots g_{h(j)} | Xg_j \right)$.  Let $A_j$ and $B_j$ denote the pivot and dependent matrix respectively of the system $\left( g_1 g_2 \cdots g_{h(j)} | Xg_j \right)$ and let $\Vec{v}_j$ and $\Vec{v}'_j$ denote its solution and constraint vectors respectively.
Lemma~\ref{lemma:pivot dependent solution constraint} gives the following relation on the entries of the first $h(j)$ columns of $g$:
\begin{equation} \label{equation: solving Hess condition}
 B_jA_j^{-1} \Vec{v}_j = \Vec{v}_j'.
\end{equation}

Now suppose we replace $X$ with ${\bf t}^{-1}X{\bf t}$.  The only part that changes is the last column of the matrix, in which $Xg_j$ is replaced by ${\bf t}^{-1}X{\bf t}g_j$.  Since ${\bf t}^{-1}X{\bf t}$ simply rescales each entry of $X$, the vector $({\bf t}^{-1}X{\bf t})g_j$ just rescales each entry of $Xg_j$.  If the entry in the $i^{th}$ row of $X$ is $1$, then the entry in the $i^{th}$ row of ${\bf t}^{-1}X{\bf t}$ is $\frac{t_{X(i)}}{t_i}$.  Consequently if the $i^{th}$ entry of $Xg_j$ is $g_{X(i),j}$ then the $i^{th}$ entry of ${\bf t}^{-1}X{\bf t}g_j$ is $\frac{t_{X(i)}}{t_i} g_{X(i),j}$.  

Let ${\bf t}$ be the diagonal matrix with entries $(t_1, \dots, t_n)$.  Our convention is that $t_0 =1$.  Define the tori
\[{\bf s}_j = \left(\frac{t_{X(i)}}{t_i}: i \in \mathcal{R}_j\right) \]
and 
\[{\bf s'}_j = \left(\frac{t_{X(i)}}{t_i}: i \not \in \mathcal{R}_j\right). \]
By construction the solution vector of the system $g_1 g_2 \cdots g_{h(j)}|\left( {\bf t}^{-1}X{\bf t}\right)g_j$ is ${\bf s}_j \Vec{v}_j$ while the constraint vector is ${\bf s'}_j \Vec{v}_j'$.  The expression $t_0$ appears in the $i^{th}$ parameter of ${\bf s}_j$ or ${\bf s'}_j$ if and only if $X(i)=0$.  This in turn is equivalent to saying that the $i^{th}$ row of $X$ is zero and so the corresponding entry of $\Vec{v}_j$, respectively $\Vec{v}_j'$, is zero.  So in fact only expressions of the form $t_{i'}/t_i$ for coordinates of $T$ appear in nonzero terms of the product ${\bf s}_j \Vec{v}_j$ respectively ${\bf s'}_j \Vec{v}_j'$.
 
Thus we get an analogue of Equation~\eqref{equation: solving Hess condition}
\begin{equation} \label{equation: solving torus Hess condition}
 B_jA_j^{-1} {\bf s}_j \Vec{v}_j = {\bf s'}_j \Vec{v}_j'.
\end{equation}

Combining Equations~\eqref{equation: solving Hess condition} and~\eqref{equation: solving torus Hess condition}, we obtain
\[
 B_jA_j^{-1} {\bf s}_j \Vec{v}_j  =  {\bf s'}_j B_jA_j^{-1} \Vec{v}_j, 
\]
where the entries in $B_j, A_j, \Vec{v}_j, \Vec{v}_j'$ are all determined uniquely by $g$ and the entries in ${\bf s}_j, {\bf s'}_j$ are all determined uniquely by $t$ and the nonzero entries in $X$.  Varying over the $j$ gives one system for each column $j$ of $g$ as claimed.
\end{proof}

\begin{remark}
Note that the decompositions of $A_j$ and $B_j$ are determined by the pivot positions in $g$ and are independent of the choice of $g$ within its fixed Hessenberg Schubert cell (though the particular entries in $A_j, B_j$ are determined by $g$ itself).
\end{remark}

We give a corollary that starts with a simpler and immediate consequence of the Hessenberg conditions, and then analyzes the case when $A_j$ consists of a single column.

\begin{corollary} \label{corollary: one column pivot conditions}
We use the notation of Theorem~\ref{theorem: main column test}.  Suppose $X$ is skeletal nilpotent and there exists an element $gB\in \Hess(X, H)$ for which $A_j$ is a $1\times 1$ matrix for some column $g_j$ of $g$. Let $\ell = piv(Xg_j)$.  Then the elements $(t_1, \ldots, t_n)$ of any torus that acts on $\Hess(X,H)$ must satisfy the equations 
\begin{equation}
    \frac{t_{X(i)}}{t_i}= \frac{t_{X(\ell)}}{t_\ell} 
\end{equation}
for all rows $i$ in which $Xg_j$ is nonzero.
\end{corollary}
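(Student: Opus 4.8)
The plan is to feed the $1 \times 1$ hypothesis directly into Theorem~\ref{theorem: main column test} and read off the resulting equations componentwise, after first extracting the simple structural consequence that the hypothesis forces. First I would unwind what ``$A_j$ is $1 \times 1$'' means for $g$ in normalized Schubert form. Since the columns of $g$ have distinct pivots, $A_j$ being $1\times 1$ says exactly one of the first $h(j)$ columns of $g$ has its pivot in rows $1,\dots,\ell$, where $\ell = piv(Xg_j)$. Because $gB \in \Hess(X,H)$ and $Xg_j \neq 0$, the earlier observation that the column $g_{j'}$ with $piv(g_{j'}) = piv(Xg_j)$ lies among the first $h(j)$ columns shows this $g_{j'}$ is that unique column; hence $\mathcal{C}_j = \{j'\}$, $\mathcal{R}_j = \{\ell\}$, and the normalized pivot entry gives $A_j = (1) = A_j^{-1}$.

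The second step is the ``simpler and immediate consequence of the Hessenberg conditions.'' With $A_j = (1)$, Lemma~\ref{lemma:pivot dependent solution constraint} forces the (unique) solution of the system $\langle g_1, \dots, g_{h(j)} \,|\, Xg_j\rangle$ to be supported on the single column $g_{j'}$, so that $Xg_j = (Xg_j)_\ell\, g_{j'}$ is a scalar multiple of $g_{j'}$. Since $(Xg_j)_\ell \neq 0$ (it is the pivot entry), this identifies the rows $i$ in which $Xg_j$ is nonzero with exactly the rows $i$ in which $g_{i,j'}$ is nonzero. This bookkeeping fact is what I will need to match the index set of the conclusion.

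Third, I specialize Theorem~\ref{theorem: main column test} to this column. Here $B_j$ is the column $(g_{i,j'})_{i<\ell}$, the solution vector is $\Vec{v}_j = \big((Xg_j)_\ell\big)$, and ${\bf s}_j = \big(t_{X(\ell)}/t_\ell\big)$, so Equation~\eqref{equation: conditions on entries} becomes, componentwise in each row $i < \ell$,
\[
g_{i,j'}\,\frac{t_{X(\ell)}}{t_\ell}\,(Xg_j)_\ell \;=\; \frac{t_{X(i)}}{t_i}\, g_{i,j'}\,(Xg_j)_\ell .
\]
Cancelling the nonzero scalar $(Xg_j)_\ell$ and restricting to those $i$ with $g_{i,j'} \neq 0$ yields $t_{X(i)}/t_i = t_{X(\ell)}/t_\ell$, while the row $i=\ell$ gives this relation trivially and no row $i > \ell$ contributes since $\ell = piv(Xg_j)$. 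By the identification in the second step, the rows $i<\ell$ with $g_{i,j'}\neq 0$ are exactly the rows in which $Xg_j$ is nonzero, which is the claim.

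I expect the only delicate point to be the index bookkeeping: making sure the rows appearing as components of Equation~\eqref{equation: conditions on entries} (namely $i \notin \mathcal{R}_j$) line up with the rows where $Xg_j$ is nonzero, rather than with the rows where $g_{j'}$ is nonzero. The scalar-multiple observation of the second step is precisely what reconciles these two index sets, so I would be careful to establish it before invoking the main column test.
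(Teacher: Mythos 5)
Your proof is correct and follows essentially the same route as the paper's: specialize Theorem~\ref{theorem: main column test} to the case $A_j=(1)$, read Equation~\eqref{equation: conditions on entries} row by row, and cancel the nonzero entries $g_{i,j'}$ (and the nonzero pivot scalar) to obtain $t_{X(i)}/t_i = t_{X(\ell)}/t_\ell$. Your explicit intermediate step that $Xg_j$ is a nonzero scalar multiple of $g_{j'}$, so the rows where $Xg_j$ is nonzero coincide with the rows where $g_{j'}$ is nonzero, is left implicit in the paper's proof (which phrases its cancellation in terms of the nonzero entries of the column with pivot in row $\ell$), and making it explicit is a minor tightening rather than a different method.
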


\begin{proof}
If $A_j$ consists of a single entry then $A_j$ and $\Vec{v}_j$ both equal $1$ by definition of pivot.  The vector $B_j$ consists of all non-pivot entries of $g_{j'}$. We will solve Equation~\eqref{equation: conditions on entries} and then analyze the tori ${\bf s}_j$ and ${\bf s'}_j$ from Theorem~\ref{theorem: main column test} more completely. When we restrict Equation~\eqref{equation: conditions on entries} to row $i$ then the equation simplifies to
\[{\bf s}_{ij} g_{ij'} = {\bf s'}_{ij} g_{ij'} \]
by Theorem~\ref{theorem: main column test}.  By hypothesis $g_{ij'} \neq 0$ so we may cancel to obtain ${\bf s}_{ij} = {\bf s'}_{ij}$.  Thus the claim will be proven once we identify the tori ${\bf s}_j$ and ${\bf s'}_j$.  

Since $A_j$ is a $1 \times 1$ matrix the torus ${\bf s}_{j}$ is the rank-one torus that scales the pivot entry of $Xg_j$.  Identifing coefficients, we find ${\bf s}_j = \left(t_{X(piv(Xg_j))}/t_{piv(Xg_j)}\right)$ which is the righthand side of the claim.  By contrast the torus ${\bf s'}_j$ has elements $t_{X(i)}/t_i$ for each row $i$ as before.  Thus we obtain the equations
\[\frac{t_{X(piv(Xg_j))}}{t_{piv(Xg_j)}} = \frac{t_{X(i)}}{t_i}\]
imposing conditions on $T$ for each nonzero entry of $g_j$ other than the pivot.
\end{proof}

\begin{corollary}\label{corollary:B nonzero}
We use the notation of Theorem~\ref{theorem: main column test}.  Suppose $X$ is skeletal nilpotent. If there exists $gB\in \Hess(X,H)$ and a column $j$ such that $B_j$ is a nonzero matrix, then $\Hess(X,H)$ is not $T$-stable. 
\end{corollary}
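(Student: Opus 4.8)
The plan is to assume $\Hess(X,H)$ is $T$-stable and derive a contradiction with the hypothesis $B_j\ne 0$. Under that assumption, for the given $gB$ and every ${\bf t}\in T$ we have ${\bf t}gB\in\Hess(X,H)$, so by Theorem~\ref{theorem: main column test} the relation~\eqref{equation: conditions on entries} holds identically in ${\bf t}$ for each column. Writing $P=B_jA_j^{-1}$ — which is nonzero precisely when $B_j$ is, since $A_j$ is invertible by Lemma~\ref{lemma:pivot dependent solution constraint} — and using the underlying Hessenberg relation $B_jA_j^{-1}\vec v_j=\vec v_j'$, equation~\eqref{equation: conditions on entries} becomes $P{\bf s}_j\vec v_j={\bf s}_j'\vec v_j'$. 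Restricting to a non-pivot row $p\le\ell_j$ gives the scalar identity
\[
\sum_{i\in\mathcal R_j}P_{pi}(\vec v_j)_i\,\frac{t_{X(i)}}{t_i}=\frac{t_{X(p)}}{t_p}\,(\vec v_j')_p,
\]
which must hold for all ${\bf t}$.

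The key point is that, since $X$ is skeletal nilpotent, the Laurent monomials $t_{X(i)}/t_i$ attached to distinct rows are pairwise distinct: skeletality makes $i\mapsto X(i)$ injective on its support, while strict upper-triangularity rules out $X(i)=i$ and any two-cycle $X(i)=i'$, $X(i')=i$. In particular, for a non-pivot row $p$ the monomial $t_{X(p)}/t_p$ does not occur among those on the left-hand side (all indexed by $i\in\mathcal R_j$, hence $i\ne p$). Consequently, if some non-pivot entry $(\vec v_j')_p$ is nonzero — equivalently, $Xg_j$ is nonzero in a non-pivot row, which forces row $p$ of $X$ to be nonzero — then the identity equates a Laurent polynomial free of $t_{X(p)}/t_p$ to a nonzero multiple of that monomial, an impossibility. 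Thus some ${\bf t}$ already violates~\eqref{equation: conditions on entries}, contradicting $T$-stability. This settles the corollary whenever a nonzero $B_j$ is accompanied by a nonzero constraint vector — in particular for $X=F_2$, where any non-pivot row below $\ell_j$ forces $\mathcal R_j=\{\ell_j\}$, so $\vec v_j$ is its single nonzero pivot value and $\vec v_j'=B_jA_j^{-1}\vec v_j$ vanishes only if $B_j$ does.

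The remaining case, $\vec v_j'=0$ with $B_j\ne 0$, is the crux and the step I expect to be hardest; it occurs only when $X$ has zero rows, allowing the nonzero columns of $P$ to sit over vanishing coordinates $(\vec v_j)_i=(Xg_j)_i$. There the monomial comparison yields only $P_{pi}(\vec v_j)_i=0$, so column $j$ alone cannot break $T$-stability. To finish I would argue globally: $T$-stability forces $\vec v_j'=0$ at every point of $\Hess(X,H)$ and for every column, rigidifying the Hessenberg conditions. I would then rewrite each scalar condition as the vanishing of the bordered determinant $\det\!\left(\begin{smallmatrix}A_j&\vec v_j\\ B_j^{(p)}&(Xg_j)_p\end{smallmatrix}\right)$, where $B_j^{(p)}$ is row $p$ of $B_j$, and compare the $T$-weights of its monomials under the induced action on the Schubert cell $\mathcal C_w$, in which ${\bf t}$ rescales the coordinate $g_{s,q}$ by $t_s/t_{w(q)}$. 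Using injectivity of $X$ as above, a surviving nonzero entry of $B_j$ should force two monomials of this determinant to carry distinct weights, so that the defining polynomial is not weight-homogeneous and $\mathcal C_w\cap\Hess(X,H)$ — hence $\Hess(X,H)$ — is not $T$-invariant. The main obstacle is to make this precise: one must show the offending pair of monomials is not cancelled once the Hessenberg conditions of the neighboring columns are imposed. Concretely, I expect that a nonzero entry of $B_j$ reappears as a nonzero pivot coefficient in an adjacent column's system, where the argument of the second paragraph forces it to vanish, producing the contradiction.
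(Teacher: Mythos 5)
Your first two paragraphs are, in substance, the paper's own proof. The paper also restricts \eqref{equation: conditions on entries} to a row $r\notin\mathcal R_j$ on which $B_jA_j^{-1}\vec v_j$ is nonzero, and then specializes ${\bf t}$ so that $({\bf s}_j)_q=1$ for all $q\in\mathcal R_j$ while $({\bf s}'_j)_r\neq 1$; your observation that the characters $t_{X(i)}/t_i$ are pairwise distinct and multiplicatively independent (because $i\mapsto X(i)$ is injective and fixed-point-free, so these are edges of a forest) is exactly the justification the paper elides with ``we may choose ${\bf t}$.'' So on the case $\vec v_j'\neq 0$ you and the paper coincide, and your version is if anything more carefully argued.

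The case you isolate as the crux --- $B_j\neq 0$ but $\vec v_j'=B_jA_j^{-1}\vec v_j=0$ --- is a genuine gap, but it is a gap in the paper, not only in your attempt: the paper's phrase ``taking a nonzero row $r$ of the resulting product'' silently presupposes that the product $B_jA_j^{-1}\vec v_j$ has a nonzero row, i.e.\ that your degenerate case does not occur. Moreover, your proposed global weight-comparison argument cannot be completed, because the statement is actually false in that case. Take $n=4$, let $X$ be the skeletal nilpotent whose single nonzero entry is a $1$ in position $(3,4)$, let $h=(1,3,3,4)$, and consider
\[
g=\begin{pmatrix} 1 & a_1 & 1 & 1\\ 1 & a_2 & 1 & 0\\ 1 & 0 & 0 & 0\\ 0 & 1 & 0 & 0\end{pmatrix},
\]
which is in normalized Schubert form with pivot rows $(3,4,2,1)$. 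The only nonzero column of $Xg$ is $Xg_2=e_3$, and $e_3=g_1-g_3\in V_g^{3}=V_g^{h(2)}$, so $gB\in\Hess(X,H)$. Here $\mathcal R_2=\{2,3\}$, $A_2=\left(\begin{smallmatrix}1&1\\1&0\end{smallmatrix}\right)$, and $B_2=(1\;\,1)\neq 0$, while $\vec v_2=(0,1)^{tr}$ gives $B_2A_2^{-1}\vec v_2=(1\;\,0)\,(0,1)^{tr}=0$, so \eqref{equation: conditions on entries} holds for every ${\bf t}$. And indeed $\Hess(X,H)$ \emph{is} $T$-stable: since $X$ has a single nonzero row, ${\bf t}^{-1}X{\bf t}=(t_4/t_3)X$ is a scalar multiple of $X$, which is precisely the $k=1$ case of Theorem~\ref{thm:X action by codim k-1 torus}. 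So a nonzero $B_j$ sitting over vanishing coordinates of $\vec v_j$ can genuinely coexist with full $T$-stability, and no refinement of your bordered-determinant strategy will rule it out.

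The correct conclusion is that the corollary needs the additional hypothesis that the constraint vector $\vec v_j'=B_jA_j^{-1}\vec v_j$ is nonzero (equivalently, that $Xg_j$ is nonzero in some non-pivot row $\leq\ell_j$). Under that hypothesis your first two paragraphs constitute a complete proof, essentially identical to the paper's. Every application of Corollary~\ref{corollary:B nonzero} in the paper lies in this safe regime: in the proof of Theorem~\ref{theorem: conditions for T-stability} the relevant $A_\ell$ is $1\times 1$ with $\vec v_\ell=(1)$, so there $B_\ell\neq 0$ automatically forces $\vec v_\ell'\neq 0$, as your own remark about $F_2$ correctly notes. In short: your attempt proves exactly as much as the paper's proof actually establishes, and your honestly flagged ``hardest step'' is not a step you failed to find but a counterexample zone that the paper's statement and proof both overlook.
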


\begin{proof}
Suppose $gB\in \Hess(X,H)$ and $j$ is a column of $g$ such that $B_j$ is a nonzero matrix. Then Equation~\eqref{equation: conditions on entries} provides at least one nonzero equation among the entries of the vectors, since  $B_j$ is nonzero. We claim the equation restricts the possibilities for ${\bf t}$. Let $A_j^{-1} = (a_{pq})$. Then $A_j^{-1}{\bf s}_j\Vec{v}_j = \sum_q a_{pq} \left({\bf s}_j\right)_q v_{qj}$. Then if we write $B_j = (b_{rp})$, the left hand side of \eqref{equation: conditions on entries} is
\[
B_j A_j^{-1}{\bf s}_j\Vec{v}_j = \sum_{p,q} b_{rp}a_{pq}  \left({\bf s}_j\right)_q v_{qj},
\]
where the sum is over $p, q\in \mathcal R_j.$
On the other hand, the right hand side of \eqref{equation: conditions on entries} is the product 
\[
{\bf s}_j' B_j A_j^{-1}\Vec{v}_j = \sum_{p, q}  \left({\bf s'}_j\right)_r b_{rp}a_{pq} v_{qj},
\]
where again the sum is over elements $p, q\in \mathcal R_j.$
Taking a nonzero row $r$ of the resulting product results in the equation 
\[
\sum_{p,q} b_{rp}a_{pq}  \left({\bf s}_j\right)_q v_{qj} = \sum_{p, q}  \left({\bf s'}_j\right)_r b_{rp}a_{pq} v_{qj}.
\]
Note that $r$ indexes rows of $B_j$, so $r\not\in \mathcal R_j$. Since $r\not \in \mathcal R_j$, we may choose ${\bf t}$ so that $ \left({\bf s'}_j\right)_r \neq 1$ and $ \left({\bf s}_j\right)_q=1$ for all $q\in \mathcal R_j$. Then
\[
\sum_{p,q} b_{rp}a_{pq} v_{qj} =  \left({\bf s'}_j\right)_r \sum_{p, q}  b_{rp}a_{pq} v_{qj},
\]
which is obviously false. Since \eqref{equation: conditions on entries} fails to hold, Theorem~\ref{theorem: main column test} implies $\Hess(X,H)$ is not $T$-invariant.
\end{proof}

\begin{corollary} \label{corollary: trivial conditions to satisfy}
Use the notation of Theorem~\ref{theorem: main column test}. Let $gB\in \Hess(X,H)$. If any of the following hold for some column $j$ of $g$, then the system in Equation~\eqref{equation: conditions on entries} is vacuously satisfied:
\begin{itemize}
    \item The vector $Xg_j$ is zero.
    \item The matrix $B_j$ is zero.
    \item The matrix $B_j$ is empty.
    \item The matrix $A_j$ is a square $piv(Xg_j) \times piv(Xg_j)$ matrix.
\end{itemize}
If at least one of these conditions holds for every column $j$ of $g$, then ${\bf t}g\in \Hess(X,H)$ for all ${\bf t}\in T$. In particular, if for all $gB \in \Hess(X,H)$ and all columns $j$ of $g$, at least one of those conditions holds, then $\Hess(X,H)$ is $T$-stable.
\end{corollary}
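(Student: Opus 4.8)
The plan is to treat the four conditions as four short, independent verifications that each make the two sides of Equation~\eqref{equation: conditions on entries} literally the same expression, and then to upgrade these column-by-column facts to a global statement about the whole torus using the equivalence that is already embedded in the proof of Theorem~\ref{theorem: main column test}.

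First I would dispose of the four cases, all of which collapse because a common factor on both sides of Equation~\eqref{equation: conditions on entries} is zero or empty. If $Xg_j = 0$, then $piv(Xg_j) = 0$, the index set $\mathcal{R}_j$ and the data $A_j$, $B_j$, $\Vec{v}_j$ are all empty, and the Hessenberg condition $Xg_j \in V_g^{h(j)}$ is automatic since $0$ lies in every subspace; both sides of Equation~\eqref{equation: conditions on entries} are then the same empty expression. If $B_j$ is the zero matrix, both the left side $B_jA_j^{-1}{\bf s}_j\Vec{v}_j$ and the right side ${\bf s'}_jB_jA_j^{-1}\Vec{v}_j$ carry the factor $B_j$ and hence vanish. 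If $B_j$ is empty, both products are empty vectors and the equation is vacuous. Finally, if $A_j$ is the square $piv(Xg_j)\times piv(Xg_j)$ matrix, then its size $k$ equals $\ell = piv(Xg_j)$, so the dependent matrix $B_j$ has $\ell - k = 0$ rows and is empty, returning us to the previous case. This proves the first assertion.

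Next I would assemble these pointwise facts. Recall from the proof of Theorem~\ref{theorem: main column test} that, under the standing hypothesis $gB \in \Hess(X,H)$ (so that Equation~\eqref{equation: solving Hess condition} holds), Equation~\eqref{equation: conditions on entries} is equivalent to the solvability criterion of Lemma~\ref{lemma:pivot dependent solution constraint} for the conjugated system $\left(g_1\cdots g_{h(j)}\mid ({\bf t}^{-1}X{\bf t})g_j\right)$, namely Equation~\eqref{equation: solving torus Hess condition}. Hence if one of the four conditions holds for every column $j$, then Equation~\eqref{equation: conditions on entries}, and with it the solvability of every conjugated column system, holds for all $j$; this says precisely $gB \in \Hess({\bf t}^{-1}X{\bf t}, H)$, which by Lemma~\ref{dumbactioncondition} is ${\bf t}gB \in \Hess(X,H)$. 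As ${\bf t}\in T$ is arbitrary, $TgB \subseteq \Hess(X,H)$, and if some condition holds for every column of every point of $\Hess(X,H)$ then the variety is $T$-stable.

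The only delicate point I expect is the bookkeeping around the degenerate objects: I must make sure that in the empty or zero cases the two sides of Equation~\eqref{equation: conditions on entries} are genuinely the same empty or zero object (so that ``vacuously satisfied'' is a real equality), and that invoking the converse direction of Theorem~\ref{theorem: main column test} is legitimate---i.e. that for each column the solvability of the conjugated system is equivalent to Equation~\eqref{equation: conditions on entries} under the hypothesis $gB\in\Hess(X,H)$. Once the emptiness conventions and this equivalence are pinned down, the rest is purely formal.
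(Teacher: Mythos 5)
Your proof is correct and follows essentially the same route as the paper's: the same four-case verification (conditions one and two reduce both sides of Equation~\eqref{equation: conditions on entries} to zero, and the fourth condition forces $B_j$ to be empty, reducing it to the third), followed by the same assembly of the column-by-column facts into $T$-stability. Your only genuine addition is making explicit the converse direction of Theorem~\ref{theorem: main column test}---tracing the equivalence between Equation~\eqref{equation: conditions on entries} and solvability of the conjugated column systems via Lemma~\ref{lemma:pivot dependent solution constraint} and then invoking Lemma~\ref{dumbactioncondition}---a step the paper compresses into a single sentence, leaving that equivalence implicit in the theorem's proof.
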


\begin{proof}
If either of the first two conditions holds then Equation~\eqref{equation: conditions on entries} simply states that zero equals zero.  By construction of $A_j$ and $B_j$ the last two conditions are equivalent.  If they hold then Equation~\eqref{equation: conditions on entries} represents a system of zero equations and is trivially true. If at least one of these conditions holds for all $g$ and $j$ then Equation~\eqref{equation: conditions on entries} is satisfied for all ${\bf t} \in T$ and so all of $T$ acts on $\Hess(X,H)$.
\end{proof}

 Note that Example~\ref{example: matrices A and B in Hessenberg conditions} showed instances of several of the special cases in Corollary~\ref{corollary: trivial conditions to satisfy}.

The next few results present contexts in which one of these conditions holds, typically because of some degeneracy in the linear system.  While we do not discuss cases when the matrix $B_j$ is zero immediately, there are several natural ways for it to happen.  The easiest is if the pivot rows of $A_j$ are actually the pivot rows of the corresponding columns in $g$ without any additional permutation of rows.  In addition, the Hessenberg conditions can force some entries of row $i$ to be zero if which the basis vector $e_i$ is not in the image of $X$.

\begin{corollary} \label{corollary: last row of H empty}
Suppose that $X = F_{n-1}$, namely a regular nilpotent nilpotent matrix in Jordan form. If $h(1)=h(n-1)=n-1$ and $h(n)=n$ then $\Hess(X,H)$ is a $T$-stable subvariety of $G/B$. Moreover $\Hess(X,H)$ is a union of Schubert varieties.
\end{corollary}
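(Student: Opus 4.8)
The plan is to describe $\Hess(F_{n-1},H)$ completely: for $g$ in normalized Schubert form with pivot permutation $w$, I claim the flag $gB$ lies in $\Hess(F_{n-1},H)$ if and only if $w(n)=n$, equivalently $V_g^{n-1}=\langle e_1,\dots,e_{n-1}\rangle$. This is of course the $k=n-1$ instance of Theorem~\ref{thm:Hn-1} together with Remark~\ref{rmk: proper subsets of G/B}, so strictly speaking both assertions are already available; but the explicit description makes $T$-stability and the Schubert decomposition transparent, so I would establish the characterization first and then read off the two conclusions.

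For the characterization, recall that $F_{n-1}g_j=(g_{2,j},g_{3,j},\dots,g_{n,j},0)^{T}$, so every vector $F_{n-1}g_j$ has vanishing last coordinate. For the ($\Leftarrow$) direction, suppose $w(n)=n$. Then each of $g_1,\dots,g_{n-1}$ has its pivot in a row $\le n-1$ and hence a zero $n$th entry, so these $n-1$ independent vectors span $V_g^{n-1}=\langle e_1,\dots,e_{n-1}\rangle$; since $F_{n-1}g_j$ has last coordinate $0$ it lies in this hyperplane, verifying $F_{n-1}g_j\in V_g^{h(j)}$ for every $j<n$ (the condition for $j=n$ being vacuous). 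For the ($\Rightarrow$) direction I would argue by contradiction: if $w(n)=i<n$, choose the column $j$ with $w(j)=i+1$; then $j\neq n$, so $h(j)=n-1$, while $F_{n-1}g_j$ is nonzero with $piv(F_{n-1}g_j)=i$. The Hessenberg condition forces $F_{n-1}g_j\in V_g^{n-1}$, which is impossible: a vector lying in $V_g^{n-1}=\langle g_1,\dots,g_{n-1}\rangle$ cannot have pivot equal to $w(n)$, because $g_n$ is the unique column of $g$ with a pivot in row $i$. This last fact, the key lemma behind Theorem~\ref{thm:Hn-1}, is proved by examining the highest pivot row occurring among the columns $g_c$ used to express such a vector. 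Hence $w(n)=n$.

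Given the characterization, $T$-stability is immediate: the condition $V_g^{n-1}=\langle e_1,\dots,e_{n-1}\rangle$ is manifestly preserved by the diagonal torus, since the coordinate hyperplane $\langle e_1,\dots,e_{n-1}\rangle$ is $T$-stable. Alternatively, one can feed the characterization into Corollary~\ref{corollary: trivial conditions to satisfy}: when $w(n)=n$ we have, for every column $j$, either $F_{n-1}g_j=0$ (when $w(j)=1$) or $piv(F_{n-1}g_j)=w(j)-1<n=w(n)$; in the latter case each of the rows $1,\dots,w(j)-1$ is the pivot row of one of the first $n-1$ columns of $g$, so the pivot matrix $A_j$ of the system $(g_1\cdots g_{h(j)}\mid F_{n-1}g_j)$ is square and the dependent matrix $B_j$ is empty. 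Thus one of the trivial conditions of Corollary~\ref{corollary: trivial conditions to satisfy} holds for every $gB\in\Hess(F_{n-1},H)$ and every $j$, and the nonzero-$B_j$ obstruction of Corollary~\ref{corollary:B nonzero} never arises.

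Finally, the characterization shows $\Hess(F_{n-1},H)=\bigsqcup_{w(n)=n}\mathcal{C}_w$, a union of Schubert cells. The index set $\{w:w(n)=n\}$ is the standard parabolic subgroup $W_J$ with $J=\{s_1,\dots,s_{n-2}\}$, which is a lower order ideal for Bruhat order with single maximal element $u=[\,n-1,n-2,\dots,1,n\,]$; hence the union equals the Schubert variety $\overline{\mathcal{C}}_u$, and in particular is a union of Schubert varieties. (Equivalently, as in the remark following Theorem~\ref{thm:Hn-1}, $\Hess(F_{n-1},H)$ is a closed union of Schubert cells and therefore a union of Schubert varieties.) The one genuinely nontrivial step is the ($\Rightarrow$) direction of the characterization, that is, forcing $w(n)=n$: this is where the regular-nilpotent ``shift'' structure of $F_{n-1}$ is essential, since it is precisely the uniqueness of the column realizing each pivot that obstructs $F_{n-1}g_j$ from lying in $V_g^{n-1}$. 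Everything else is formal bookkeeping.
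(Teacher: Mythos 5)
Your proof is correct, and it reaches the same structural conclusion as the paper---membership in $\Hess(F_{n-1},H)$ depends only on the pivot permutation via $w(n)=n$, hence the variety is a union of Schubert cells, hence (being closed) a union of Schubert varieties, hence $T$-stable---but the key forcing step is different. The paper's own proof of this corollary exploits the regular-nilpotent structure by iteration: it shows $Xg_j, X^2g_j,\dots$ all stay in $V_g^{n-1}$ and that a column with pivot in row $m$ generates $\langle e_1,\dots,e_m\rangle$ under repeated application of $F_{n-1}$, so a pivot-$n$ column among the first $n-1$ would force $\mathbb{C}^n\subseteq V_g^{n-1}$. You instead use a one-step argument: the column with pivot in row $w(n)+1$ maps under $F_{n-1}$ to a vector with pivot $w(n)$, which cannot lie in $V_g^{n-1}$ because $g_n$ is the unique column with that pivot row. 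This is exactly the mechanism of the paper's proof of Theorem~\ref{thm:Hn-1}, and you correctly observe that the corollary is in fact the $k=n-1$ instance of that theorem together with the subsequent remark, since monotonicity of $h$ plus $h(1)=h(n-1)=n-1$ forces $h(j)=n-1$ for all $j<n$---a shortcut the paper does not take. Your route buys uniformity (the same argument covers all $F_k$, whereas the iteration argument is special to the single-Jordan-block case) and a sharper conclusion: identifying the index set $\{w:w(n)=n\}$ as the parabolic $W_J$, $J=\{s_1,\dots,s_{n-2}\}$, whose Bruhat lower interval it is, shows the variety is the \emph{single} Schubert variety $\overline{\mathcal{C}}_u$ with $u=[n-1,\dots,1,n]$, consistent with the paper's closing observation that it is homeomorphic to the flag variety of $\mathbb{C}^{n-1}$. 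What the paper's iteration argument buys in exchange is a self-contained illustration of how successive images $X^mg_j$ constrain flags, a technique reused elsewhere; your alternative verification of $T$-stability via the square-$A_j$/empty-$B_j$ criterion of Corollary~\ref{corollary: trivial conditions to satisfy} is also checked correctly and ties the result into the paper's general framework.
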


\begin{proof}
Suppose $gB\in \Hess(X, H)$ and $g$ is in normalized Schubert form. First note that if $h(1)=h(n-1)=n-1$ then 
\[ Xg_j \in \langle g_1, g_2, \ldots, g_{n-1} \rangle= V_g^{n-1} \]
for all $j \leq n-1$.  It follows that for each column $g_j$ all successive images are contained in $V^{n-1}_g$
\[Xg_j, X^2 g_j, X^3 g_j, \ldots \in \langle g_1, \ldots, g_{n-1} \rangle.\]   
Since $X=F_{n-1}$ the pivot of $Xg_j$ is in the row above the pivot of $g_j$ unless $g_j = e_1$ in which case $Xg_j$ is zero.  It follows that if $g_j$ has pivot in row $k$ then $g_j$ and its successive images under $X$ span the first $k$ basis vectors:
\[ \langle g_j, Xg_j, X^2g_j, \ldots, X^{k-1} g_j \rangle = \langle e_k, e_{k-1}, \ldots, e_1 \rangle.\] 
Now suppose $g_j$ has pivot in row $n$ for some $j<n$. Together these two claims imply that 
\[\langle g_j, Xg_j, \ldots, X^{n-1}g_j \rangle = \mathbb{C}^n \subseteq V_g^{n-1},\]
which is a contradiction.  Thus $g_n$ must have a pivot in the $n$th row. Since $g$ is in normalized Schubert form, $g_n=e_n$. 

 Moreover, if $g$ is in normalized Schubert form, and $g_n=e_n,$ then 
$$\langle g_1, \dots g_{n-1}\rangle = \langle e_1, \dots, e_{n-1}\rangle$$ and thus the Hessenberg conditions are necessarily satisfied. In particular, if $g'B$ is in the same Schubert cell as $gB,$ then it has pivots in the same position and is also in $\Hess(X,H)$. Thus $\mathcal C_w\subseteq \Hess(X,H)$ or it intersects $\Hess(X,H)$ trivially. If $\Hess(X,H)$ contains $\mathcal C_w$, it contains the closure $X_w$ since $\Hess(X,H)$ is closed. Thus $\Hess(X,H)$ is a union of Schubert varieties; indeed, it is homeomorphic to flags on $\mathbb C^{n-1}.$
\end{proof}

 The previous result relied on the fact that only one basis vector is not in the image of $X$ and so cannot be easily extended to different nilpotent matrices.  The next result shows how different the underlying Hessenberg varieties can be, even with very similar Hessenberg space.  In this case, we use a kind of {\em dual} Hessenberg space where we omit the first column of $n \times n$ matrices instead of the last row.

\begin{corollary} \label{corollary: first column of Xg empty}
Suppose that $X$ is any skeletal nilpotent matrix.  If $h(1)=1$ and $h(2)=n$ then $\Hess(X,H)$ is $T$-stable.
\end{corollary}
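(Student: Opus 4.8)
The plan is to invoke Corollary~\ref{corollary: trivial conditions to satisfy}: it suffices to show that for every $gB \in \Hess(X,H)$ written in normalized Schubert form and every column $j$, at least one of the four vacuity conditions of that corollary holds. First I would record the shape of $h$. Since $h$ is nondecreasing with $h(j)\ge j$ and $h(2)=n$, in fact $h(j)=n$ for every $j\ge 2$, so the only genuine Hessenberg condition (Definition~\ref{Hessenbergconditions}) is the one for $j=1$, namely $Xg_1\in V_g^{1}=\langle g_1\rangle$.

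Next I would treat the column $j=1$. The key observation is that a skeletal nilpotent $X$ is strictly upper-triangular (Definition~\ref{def:Xrules}), so each entry $(Xg_1)_i$ is a scalar multiple of $(g_1)_{X(i)}$ with $X(i)>i$; consequently $piv(Xg_1)<piv(g_1)$ whenever $Xg_1\neq 0$. Thus $Xg_1$ and $g_1$ have different pivot rows unless $Xg_1=0$, and so the condition $Xg_1\in\langle g_1\rangle$ can hold only when $Xg_1=0$. This forces $Xg_1=0$ for \emph{every} $gB\in\Hess(X,H)$, so the first vacuity condition of Corollary~\ref{corollary: trivial conditions to satisfy} (that $Xg_j$ is zero) holds at $j=1$ throughout $\Hess(X,H)$.

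For the remaining columns $j\ge 2$ I would exploit $h(j)=n$. Here the linear system of Equation~\eqref{eq: Hessenberg conditions} involves all $n$ columns of $g$, which, since $g$ is invertible, have pivots in every row $1,\dots,n$. Writing $\ell_j=piv(Xg_j)$, this means $\mathcal{R}_j=\{1,\dots,\ell_j\}$, leaving no index $i\le \ell_j$ with $i\notin\mathcal{R}_j$; hence the dependent matrix $B_j$ is the empty (that is, $0\times \ell_j$) matrix. So the third vacuity condition of Corollary~\ref{corollary: trivial conditions to satisfy} holds for every $j\ge 2$. Combining the two cases, every column of every $gB\in\Hess(X,H)$ satisfies one of the listed conditions, and Corollary~\ref{corollary: trivial conditions to satisfy} then yields that $\Hess(X,H)$ is $T$-stable.

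I expect the $j=1$ analysis to be the crux: the whole argument hinges on the pivot-lowering property of skeletal (strictly upper-triangular) $X$, which forces $Xg_1=0$ on $\Hess(X,H)$ and thereby makes the otherwise nontrivial constraint at $h(1)=1$ vacuous from the standpoint of the torus action. The bookkeeping for $j\ge 2$ is routine once one notes that $h(j)=n$ makes the relevant pivot set complete, so I would keep that step brief.
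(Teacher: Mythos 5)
Your proof is correct and takes essentially the same route as the paper's: both arguments show the pivot-lowering property of the strictly upper-triangular $X$ forces $Xg_1=0$ (so $g_1\in\ker X$), and for $j\geq 2$ both observe that the coefficient matrix of the system in Equation~\eqref{eq: Hessenberg conditions} contains all columns of the invertible matrix $g$, making $B_j$ empty so that Corollary~\ref{corollary: trivial conditions to satisfy} applies. Your explicit remark that monotonicity of $h$ forces $h(j)=n$ for all $j\geq 2$ merely spells out a step the paper leaves implicit.
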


\begin{proof}
In this case, for $gB \in \Hess(X,H)$ we must have 
\[Xg_1 \in \{cg_1: c \in \mathbb{C}\}.\]  
Since the pivot of $Xg_1$ and $g_1$ are in different rows when $X$ is nilpotent, we conclude that in fact $c=0$ and $g_1 \in \ker X$.  For every column $j$ with $Xg_j$ nonzero, the matrix $A_j$ is full rank because every column of $g$ is contained in  the coefficient matrix from the linear system of Equation \eqref{eq: Hessenberg conditions}. 
Thus the conditions of Corollary~\ref{corollary: trivial conditions to satisfy} hold for all $g$ with $gB \in \Hess(X,H)$ and all $j$.  We conclude that the full torus acts on $\Hess(X,H)$ as desired.
\end{proof}

Hessenberg varieties satisfying $h(1)=1$ and $h(2)=n$ form a family of fiber bundles with fiber a smaller flag variety.  Indeed, suppose $X$ is any nilpotent matrix and suppose its kernel is $m$-dimensional.  Then $g\in \Hess(X,H)$ if and only if  $V_g^1$ is a line in $\ker X$ and the next $n-1$ columns form a full flag in $\mathbb{C}^n/V_g^1$. Thus $\Hess(X,H)$ is a bundle over $\mathbb P^{m-1}$ whose fiber is the set of full flags in $\mathbb C^n/V_g^1$.
In particular, these Hessenberg varieties are smooth of dimension $\binom{n-1}{2} \times (m-1)$.

By contrast, when $h(1)=2$ we can identify cases in which the largest subtorus of $T$ under which $\Hess(X,H)$ is invariant is codimension $(k-1)$.  This is what the next corollary does; it is a key part of the proof of Theorem~\ref{theorem: max subtorus on Fk} and a core argument in the classical analysis of Peterson varieties.

\begin{corollary}\label{corollary: first column condition for Fk}
Fix $X$ to be a skeletal nilpotent with $k$ nonzero rows and let $H$ be any Hessenberg space with $h(1)=2$. Suppose there is an element $gB \in \Hess(X,H)$ with $piv(g_1)=n$ and $n$ nonzero entries in $g_1$. Then the largest subtorus of $T$ that acts on $\Hess(X,H)$ is the codimension-$(k-1)$ torus $K \subseteq T$ from Theorem~\ref{thm:X action by codim k-1 torus}. 
\end{corollary}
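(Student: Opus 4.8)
The plan is to combine the two facts already available: Theorem~\ref{thm:X action by codim k-1 torus} guarantees that the codimension-$(k-1)$ torus $K$ defined by \eqref{torusconstraintX} does act on $\Hess(X,H)$, so only the reverse inclusion remains, namely that \emph{no} element of $T$ outside $K$ can act. Corollary~\ref{corollary: one column pivot conditions} is built for exactly this: if I can exhibit a single column $j$ whose pivot matrix $A_j$ is $1\times 1$ and for which $Xg_j$ is supported precisely on the nonzero rows of $X$, then the constraints the Corollary imposes on any acting torus will literally be the defining relations \eqref{torusconstraintX} of $K$. The special column $g_1$ from the hypothesis is the natural candidate, so I would take $j=1$ throughout.

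First I would analyze $Xg_1$. Because $g_1$ has all $n$ entries nonzero and $X$ is skeletal nilpotent, the $i$th entry of $Xg_1$ equals $X_{i,X(i)}(g_1)_{X(i)}$, which is nonzero exactly when $i$ is a nonzero row of $X$. Hence $Xg_1$ is supported precisely on the nonzero rows $i_1<\cdots<i_k$ of $X$, it is nonzero (as $X\neq 0$), and $piv(Xg_1)=i_k$. Since $X$ is strictly upper-triangular, $i_k\le n-1<n=piv(g_1)$, a gap I will exploit next.

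Next I would use $h(1)=2$. The Hessenberg condition forces $Xg_1\in V_g^2=\langle g_1,g_2\rangle$, so I write $Xg_1=c_1g_1+c_2g_2$ and compare row $n$. There $(Xg_1)_n=0$ (as $piv(Xg_1)=i_k<n$), while $(g_1)_n=1$ (as $piv(g_1)=n$) and $(g_2)_n=0$ (since in normalized Schubert form $g_2$ vanishes below its pivot, and $piv(g_2)<n$). This yields $c_1=0$, and then $c_2\neq 0$ because $Xg_1\neq 0$; thus $g_2=c_2^{-1}Xg_1$ and $piv(g_2)=i_k$. Consequently, among the two columns of the system $\langle g_1,g_2\mid Xg_1\rangle$, only $g_2$ has its pivot in rows $1,\dots,i_k$, so the pivot matrix $A_1$ is $1\times 1$.

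Finally I would invoke Corollary~\ref{corollary: one column pivot conditions} with $j=1$ and $\ell=i_k$: every element $(t_1,\dots,t_n)$ of a torus acting on $\Hess(X,H)$ must satisfy $t_{X(i)}/t_i=t_{X(i_k)}/t_{i_k}$ for each nonzero row $i$ of $X$. These are precisely the relations \eqref{torusconstraintX} cutting out $K$, so any acting subtorus is contained in $K$; together with Theorem~\ref{thm:X action by codim k-1 torus} this pins down $K$ as the largest. The only delicate point — the main, and rather mild, obstacle — is confirming that $A_1$ is genuinely $1\times 1$, i.e.\ correctly locating $piv(g_2)$; this is exactly where the two hypotheses $piv(g_1)=n$ and $h(1)=2$ are essential, since the row-$n$ comparison is what eliminates $g_1$ from the spanning relation and collapses the pivot matrix to a single entry.
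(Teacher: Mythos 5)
Your proposal is correct and follows essentially the same route as the paper's proof: use $h(1)=2$ together with $piv(g_1)=n$ and the fullness of $g_1$ to force $V_g^2=\langle g_1, Xg_1\rangle$ and hence a $1\times 1$ pivot matrix $A_1$, then invoke Corollary~\ref{corollary: one column pivot conditions} to recover exactly the $k-1$ relations of Equation~\eqref{torusconstraintX}, so that Theorem~\ref{thm:X action by codim k-1 torus} pins down $K$ as maximal. Your explicit row-$n$ comparison showing $c_1=0$ and $g_2\propto Xg_1$ is simply a more detailed justification of the step the paper states tersely.
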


\begin{proof}
By construction of $X$ we know that $X(g_1)$ is nonzero with pivot in row at most $n-1$.  Thus $X(g_1)$ and $g_1$ together span a two-dimensional space.  Since $gB \in \Hess(X,H)$ and $h(1)=2$ we conclude $V_g^2 = \langle g_1, X(g_1) \rangle$. Thus $A_1$ is a $1\times 1$ matrix.

By construction of $X$ and the fact that each non-pivot entry of $g_1$ is nonzero we know that there are $k$ nonzero entries in $X(g_1)$.  Since each non-pivot entry of $g_1$ is nonzero, Corollary~\ref{corollary: one column pivot conditions} tells us that
\[\frac{t_{X(i)}}{t_i} = \frac{t_{X(i')}}{t_{i'}}\]
for all pairs of nonzero entries $(i,X(i)), (i',X(i'))$ in $X$.  These are precisely the $k-1$ conditions of Equation~\eqref{torusconstraintX} from Theorem~\ref{thm:X action by codim k-1 torus}.  So the codimension-$(k-1)$ torus $K \subseteq T$ is the maximal subtorus of $T$ that acts on these Hessenberg varieties, as claimed.
\end{proof}

\subsection{Torus actions on $\Hess(F_k,H)$} \label{section: torus actions for Fk}

We now show exactly when $\Hess(F_2,H)$ is $T$-stable.
Theorem~\ref{theorem: conditions for T-stability} is equivalent to the statement that $\Hess(F_2, H)$ is {\it not} $T$-stable exactly when $h$ is observed in Table~\ref{table: Hessenbergfunction}.  All of the Hessenberg spaces in Example~\ref{example: hessenberg spaces} are cases from Table~\ref{table: Hessenbergfunction}.

\begin{theorem}\label{theorem: conditions for T-stability} Suppose $n\geq 4.$ The variety $\Hess(F_2, H)$ is $T$-stable if and only if $h$ satisfies one of the following conditions:
\begin{enumerate}
    \item $h(j)=n$ for all $j=1,\dots, n$,
    \item $h(j) = n-1$ for $j=1, \dots, n-1,$ and $h(n)=n$,
    \item $h(1)=1$ and $h(j)=n$ for $j> 1$, or
    \item $h(1)=1$ and $h(j)=n-1$ for $j=2, \dots, n-1,$ and $h(n)=n$.
\end{enumerate}
\end{theorem}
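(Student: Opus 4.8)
The plan is to recast $T$-stability entirely in terms of the dependent matrices $B_j$ from Theorem~\ref{theorem: main column test} and then reduce the whole statement to a combinatorial question about pivot placements. By Corollary~\ref{corollary:B nonzero}, $\Hess(F_2,H)$ fails to be $T$-stable as soon as some $gB \in \Hess(F_2,H)$ has a column $j$ with $B_j \neq 0$; conversely, Corollary~\ref{corollary: trivial conditions to satisfy} shows that if every column $j$ of every $gB \in \Hess(F_2,H)$ has $B_j$ zero or empty (equivalently $F_2 g_j = 0$), then $\Hess(F_2,H)$ is $T$-stable. So the theorem reduces to the single question: for which $h$ does there exist a flag $gB \in \Hess(F_2,H)$ and a column $j$ with $B_j \neq 0$?

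First I would pin down which columns can produce $B_j \neq 0$. Since $F_2$ sends $e_{n-1}\mapsto e_1$ and $e_n \mapsto e_2$ and kills every other standard basis vector, the only columns with $F_2 g_j \neq 0$ are the column $d$ whose pivot is row $n-1$, for which $F_2 g_d = e_1$, and the column $c$ whose pivot is row $n$, for which $F_2 g_c = (g_c)_{n-1}e_1 + e_2$. A short argument with normalized Schubert form shows $B_d$ is always empty for flags in the variety (covering $e_1$ forces the pivot-$1$ column into range, so the only pivot row at most $1$ is a genuine pivot), and that the only way to get $B_c \neq 0$ is the configuration in which $(g_c)_{n-1}\neq 0$, the pivot-$2$ column $g_{p_2}$ lies among the first $h(c)$ columns with $(g_{p_2})_1 \neq 0$, and no pivot-$1$ column (necessarily $e_1$) lies among the first $h(c)$ columns. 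In that case the Hessenberg condition on column $c$ forces $(g_c)_{n-1} = (g_{p_2})_1$ and $B_c$ is the nonzero $1\times 1$ matrix $((g_{p_2})_1)$.

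Next I would translate this configuration into an existence statement about pivot positions, writing $m = \min\{j : h(j)=n\}$. I build a witness flag in which every column other than $c$ and $d$ is a standard basis vector, so its Hessenberg condition is vacuous, with $d \geq m$ so column $d$'s condition is trivial and $c < m$ so column $c$ carries a genuine condition. Normalized Schubert form then requires $c < d$ for $(g_c)_{n-1}$ to be free and $p_2 < p_1$ for $(g_{p_2})_1$ to be free, where $p_1, p_2$ index the pivot-$1$ and pivot-$2$ columns. Thus the obstruction exists exactly when four distinct positions carrying pivots $n, n-1, 2, 1$ at $c, d, p_2, p_1$ can be chosen with $c < m \leq d$ and $p_2 \leq h(c) < p_1$. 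Carrying out this placement is the heart of the argument and the main obstacle: it is an elementary but careful case analysis on $m$ and on $h(1)$. I expect to find placement blocked in precisely four situations---$m=1$; $m=2$ with $h(1)=1$; $m=n$ with $h(1)=n-1$; and $m=n$ with $h(1)=1$ and $h(j)=n-1$ for $2\leq j\leq n-1$---because these force either $p_2 = c$ (when $h(c)=1$) or $p_1 = d = n$ (when reaching $n$ consumes the only available slot), and that these are exactly the four listed conditions. This yields both directions at once: feasibility of the placement produces a flag with $B_c\neq 0$ and hence non-$T$-stability, while infeasibility gives $T$-stability through Corollary~\ref{corollary: trivial conditions to satisfy}.

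Finally, as a cross-check on the stable half, I would observe that the four listed cases also follow from results already in hand: case (1) is $\Hess(F_2,\mathfrak g)=G/B$; case (2) is Theorem~\ref{thm:Hn-1}; case (3) is Corollary~\ref{corollary: first column of Xg empty}; and case (4) is the intersection of the $T$-stable variety of Theorem~\ref{thm:Hn-1} with the $T$-stable condition $V_g^1 \subseteq \ker F_2$ arising from $h(1)=1$, hence $T$-stable. The hypothesis $n\geq 4$ enters because $F_2$ is a single Jordan block, i.e. regular nilpotent, when $n \leq 3$, so the two-Jordan-block structure that drives the pivot analysis only appears once $n \geq 4$.
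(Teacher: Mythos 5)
Your proposal is correct and takes essentially the same route as the paper: both use Theorem~\ref{theorem: main column test} together with Corollaries~\ref{corollary:B nonzero} and~\ref{corollary: trivial conditions to satisfy} to reduce $T$-stability to whether some flag in $\Hess(F_2,H)$ has a nonzero $1\times 1$ dependent matrix at the pivot-row-$n$ column (equivalently, whether a pivot placement with $p_2 \leq h(c) < p_1 \leq h(d)$ and $c<d$ is feasible), and your case analysis on $m=\min\{j: h(j)=n\}$ is just a reparametrization of the paper's analysis via the four permutations $w_1,\ldots,w_4$ and Table~\ref{table: Hessenbergfunction}, with the same four blocked cases and the same verification of the stable ones. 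One small slip worth fixing: ``$B_j$ zero or empty (equivalently $F_2g_j=0$)'' is not an equivalence, since $B_j$ is also empty when $A_j$ is a full square pivot matrix---as happens for your column $d$, where $F_2g_d = e_1 \neq 0$---but this does not affect the argument.
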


\begin{table}[h]
\begin{tabular}{|ll|l|}
\hline
\multicolumn{2}{|l|}{Hessenberg function}                                    & Pivot Row \\ \hline
\multicolumn{1}{|l|}{\multirow{2}{*}{$h(n-1)=n$}}   & $h(1)=1, h(2)\leq n-1$ &  $(2,n,\ldots,n-1,1) $        \\ \cline{2-3} 
\multicolumn{1}{|l|}{}                              & $2\leq h(1)\leq n-1$   &      $(n,2,\ldots,n-1,1)$     \\ \hline
\multicolumn{1}{|l|}{\multirow{2}{*}{$h(n-1)=n-1$}} & $h(1)=1, h(2)\leq n-2$ &     $(2, n,\ldots,1,n-1)$      \\ \cline{2-3} 
\multicolumn{1}{|l|}{}                              & $2\leq h(1)\leq n-2$   & $(n,2,\ldots,1,n-1)$   \\ \hline
\end{tabular}

\caption{Hessenberg functions for which $\Hess(F_2, H)$ is not $T$-stable}\label{table: Hessenbergfunction}
\end{table}

\begin{proof}[Proof of Theorem~\ref{theorem: conditions for T-stability}]

Suppose $X=F_2$ and $gB\in \Hess(X,H)$. There are precisely two nonzero columns in $Xg$, namely
$$
    Xg_{\ell} =(a,1,0,0,\ldots,0)^{tr}, \quad \mbox{and}\quad
    Xg_{m}=(1,0,0,\ldots,0)^{tr}=e_1
$$ for some $a\in \mathbb C$. These are the images of the column vectors $g_{\ell}$ and $g_{m}$ of $g$ with pivot in the $n^{th}$ and $(n-1)^{th}$ rows, respectively. Observe that $\ell$ and $m$ have the same value for all $g'$ in the same Schubert cell as $g$. 

We first identify when Corollary~\ref{corollary: trivial conditions to satisfy} applies to all columns of $g$ and thus ${\bf t}gB\in\Hess(X,H)$ for all ${\bf t}\in T.$ Since $Xg_j=0$ for $j\neq m, \ell$, we need only consider $j=m$ and $j=\ell$. Furthermore, since $piv(Xg_m)=1$ and one of the first $h(m)$ columns of $g$ is $e_1$, $A_m$ is a $1\times 1$ matrix and $B_m$ is empty. Thus we need only find when Corollary~\ref{corollary: trivial conditions to satisfy} applies to $j=\ell$. By assumption $Xg_\ell\in V_g^{h(\ell)}$. Since $piv(Xg_\ell)=2$, the matrix  $A_\ell$ is either a $1\times 1$ or $2\times 2$ matrix. 

 If $Xg_m=e_1 \in V_g^{h(\ell)}$ then $A_\ell$ is a $2\times 2$ matrix, and $B_{\ell}$ is an empty matrix.  Corollary~\ref{corollary: trivial conditions to satisfy} applies to all such elements $gB\in \Hess(X,H)$, independent of $H$.

On the other hand, if $Xg_m=e_1\not\in V_g^{h(\ell)}$ for any $gB\in \Hess(X,H)$, then
$A_\ell$ is a $1\times 1$ matrix, and $B_\ell$ is the $1\times 1$ matrix $(b)$ where $(b,1,0,0,\ldots,0)^{tr}$ is the column of $g$ with pivot in row $2$. If $b\neq 0$, Corollary~\ref{corollary:B nonzero} implies that $\Hess(X,H)$ is not $T$-stable. 

The column $e_1$ of $g$ is necessarily to the right of the column $(b,1,0,0,\ldots,0)^{tr}$ in $g$, so $b$ is not identically zero on the Schubert cell $\mathcal C_w$ of $g$.
 Observe that the pivot positions of $g'$ are the same for all elements of $g'B\in \mathcal C_w$. Thus any $g'B \in \mathcal C_w$
  satisfies $Xg'_m=e_1\not\in V_{g'}^{h(\ell)}$. Then Equation~\eqref{eq: Hessenberg conditions} holds if and only if $g'$ in Schubert normal form satisfies Equation~\eqref{eq: matrix Hessenberg conditions}. We may choose $g'$ so that the column with pivot in the second row is $(b', 1, 0,\dots, 1)^{tr}$ with $b'\neq 0$. On the other hand, $Xg'_\ell= (a', 1,0, \dots, 0)^{tr}$ for some $a'\in\mathbb C.$ Then $g'B\in \Hess(X,H)$ if and only if \eqref{eq: matrix Hessenberg conditions} is satisfied, which is exactly when   $b'=a'$. Again by Corollary~\ref{corollary:B nonzero}, $\Hess(X,H)$ is not $T$-stable.

In sum, $\Hess(X,H)$ admits a full $T$-action if and only if $e_1\in V_g^{h(\ell)}$ for all $gB\in \Hess(X,H)$.  This in turn is determined completely by the shape of $H$.
Furthermore, if   $e_1\in V_g^{h(\ell)}$ holds for some $g$, then it holds for all elements in the same Schubert cell as $g$. Thus  $\Hess(X,H)$ admits a full $T$-action if and only if $e_1\in V_w^{h(\ell)}$ for every $wB\in \Hess(X,H)$ with $w\in W$. We therefore restrict our attention to the permutation matrices.

Observe that
the permutation matrix $w$ with $gB \in \mathcal C_w$ satisfies  $w(\ell)=n$ or ${we_\ell=e_n}$. 
The condition that $\ell<m$ is that $e_n$ occurs in an earlier column than  $e_{n-1}$ in the matrix $w$. Similarly the column $e_2$ occurs to the left of $e_1$ in $w$. 
This leaves only four possible orders for the vectors $e_1, e_2, e_{n-1},e_n$ to occur as columns of $w$, as listed in Column 2 of the chart below. 
In each case, the Hessenberg conditions may put a constraint on $h$, which is listed in the third column. We only specify four columns of each permutation matrix $w_i$ and assume that $w_i$ restricts to the identity on $e_j$ with $3 \leq j \leq n-2$.

Our strategy for each $H$ is to pick a specific permutation $w$ among $w_1, w_2, w_3$ or $w_4$ listed below and verify  that $wB\in \Hess(F_2, H)$ and $e_1\not\in V_w^{h(\ell)}.$ 
This will demonstrate that $\Hess(F_2,H)$ is not $T$-stable.

\medskip

\begin{tabular}{c|c|c|c|c}
 Permutation & Image on &  & $H$ with     &  Condition for  \\
matrix $w$ & $e_1, e_2, e_{n-1}, e_n$ & $\ell$ & $wB \in \Hess(F_2,H)$ &$e_1 \not \in V_w^{h(\ell)}$\\
\cline{1-5} $w_1$     & $e_2, e_n, e_1, e_{n-1} $ & 2 & All & $h(2) < n-1$ \\
$w_2$ & $e_n, e_2, e_1, e_{n-1}$ & 1 &  $h(1) \geq 2$ & $h(1) < n-1$ \\
$w_3$ & $e_2, e_n, e_{n-1}, e_1$ & 2 & $h(n-1) =n $ & $h(2)<n$ \\
$w_4$ & $e_n, e_2, e_{n-1}, e_1$ & 1 & $h(1) \geq 2$ and $h(n-1)=n$ & $h(1) < n$
\end{tabular}

\medskip

For example, if $w_1$ has column vectors $e_2, e_n, e_1, e_{n-1}$ in that order as its first two and last two columns, then $w_1B\in \Hess(F_2, H)$ for any vector subspace $H \subseteq \mathfrak g$. Note that $\ell=w^{-1}(n)$ so $\ell=2$. If $h(2)<n-1$ then $V_w^{h(2)}$ does not contain the last two columns of $w$ and so in particular does not contain $e_1.$  

The Hessenberg functions that remain to be considered are those for which $\Hess(F_2,H)$ does not contain $w_iB$ or for which $w_iB \in \Hess(F_2,H)$ and $e_1 \in V_w^{h(\ell)}$.  In other words:
\begin{itemize}
    \item those with $h(2) \geq n-1$, 
    \item those with $h(1) =1$, 
    or $h(1) \geq n-1$,
    \item those with $h(n-1)=n-1$ 
    or $h(2)=n$
    \item those with $h(1)=1$ or $h(n-1)=n-1$ or $h(1)=n$.
\end{itemize}
If all of those four conditions are true then $\Hess(F_2,H)$ is potentially $T$-stable. These conditions together on $h$ are equivalent to the four cases in statement of the theorem.  Our last task is to confirm that they are all in fact $T$-stable Hessenberg varieties.

\begin{enumerate}
\item If $h(1)=n$ then the Hessenberg variety is the entire flag variety so the full torus acts.

\item If $h(1)=n-1$ and $h(n-1)=n-1$ then Corollary~\ref{corollary: last row of H empty} applies and shows that $\Hess(F_2,h)$ admits the full torus action.

\item If $h(1)=1$ and $h(2)=n$ then Corollary~\ref{corollary: first column of Xg empty} applies and shows that $\Hess(F_2,h)$ admits the full torus action.

\item Finally suppose $h(1)=1, h(2)=n-1,$ and $h(n-1)=n-1$ and suppose $gB \in \Hess(F_2,H)$.  We know that $g_1$ has no pivot in row $n$ since $Xg_1 \neq g_1$ and thus $Xg_1$ cannot be in $V_g^1 = \langle g_1 \rangle$ unless it is zero. (See also the proof of Corollary~\ref{corollary: first column of Xg empty}.) So the column $g_{\ell}$ with pivot in row $n$ is in the last $n-1$ columns.  Similarly we know that $g_n \neq e_1$ since otherwise $Xg_m \not \in V_g^{n-1}$.  Thus $e_1 \in V_g^{h(\ell)}$.  It follows that $\Hess(F_2,H)$ admits the full torus action in this case, as well.
\end{enumerate}
This proves the claim.
\end{proof}

The next result finds a matrix $g$ satisfying the hypotheses of Corollary \ref{corollary: first column condition for Fk} for a family of Hessenberg varieties for $F_k$. One key consequence of the claim is that the only nontrivial subtorus of $T$ that acts on the well-known Peterson variety is the one-dimensional torus $S$ from Theorem~\ref{thm:X action by codim k-1 torus}.  Note the similarity in proof to that of Lemma~\ref{lemma: circle group for skeletal}.  That is because the matrix $g$ is in the Schubert cell $\mathcal{C}_w$ for the permutation $w$ defined in Lemma~\ref{lemma: circle group for skeletal}. 

\begin{theorem} \label{theorem: max subtorus on Fk}
The codimension-$(k-1)$ torus $K \subseteq T$ from Theorem~\ref{thm:X action by codim k-1 torus} is the maximal torus that acts on the following:
\begin{itemize}
    \item The Hessenberg variety $\Hess(F_k,H_0)$ where $H_0$ is defined by $h(i)=i+1$ for $1 \leq i \leq n-1$.
    \item The Peterson variety $\Hess(F_{n-1},H_0)$ where 
    $F_{n-1}$ is the nilpotent matrix in Jordan form with a single Jordan block.
\end{itemize}
\end{theorem}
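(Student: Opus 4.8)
The plan is to deduce both statements at once from Corollary~\ref{corollary: first column condition for Fk}. Observe first that the Peterson variety is exactly the case $k = n-1$ of the first bullet, since $F_{n-1}$ is skeletal nilpotent with $k = n-1$ nonzero rows and the Hessenberg space is the same $H_0$. For every $k$ the matrix $F_k$ is skeletal nilpotent with $k$ nonzero rows, and $H_0$ satisfies $h(1) = 2$, so all the hypotheses of Corollary~\ref{corollary: first column condition for Fk} hold except the existence of a witness flag. Thus it suffices to exhibit a single $gB \in \Hess(F_k, H_0)$ whose first column $g_1$ has pivot in row $n$ and has all $n$ entries nonzero; the corollary then immediately identifies the codimension-$(k-1)$ torus $K$ of Theorem~\ref{thm:X action by codim k-1 torus} as the maximal subtorus of $T$ acting on $\Hess(F_k, H_0)$.

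To build the witness flag I would take $v = (1, 1, \ldots, 1)^{tr}$ and form the $F_k$-Krylov flag as far as it goes. A direct computation gives $F_k^m v = (1, \ldots, 1, 0, \ldots, 0)^{tr}$ with exactly $n - m(n-k)$ leading ones, so the successive images $v, F_k v, F_k^2 v, \ldots$ have strictly decreasing pivots $n, k, 2k-n, \ldots$; the nonzero ones among them, say $v, \ldots, F_k^{p-1} v$ with $F_k^p v = 0$, are therefore linearly independent. I would set $V_g^j = \langle v, F_k v, \ldots, F_k^{j-1} v\rangle$ for $j \leq p$, and for $j > p$ complete the flag by adjoining the standard basis vectors $e_1, e_2, \ldots$ in increasing order of index, discarding any already in the current span. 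Since $v$ is pivot-normalized with every entry nonzero, and the normalization conditions only clear entries below a pivot or to the right of a pivot within its own row and so never modify the leftmost column, the resulting normalized Schubert representative $g$ has $g_1 = v$ with $piv(g_1) = n$ and all $n$ entries nonzero, exactly as required.

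It then remains to verify the Hessenberg conditions $F_k V_g^j \subseteq V_g^{j+1}$ for all $j$. For $j < p$ this is automatic from the Krylov definition, since $F_k V_g^j = \langle F_k v, \ldots, F_k^j v\rangle \subseteq V_g^{j+1}$. For $j \geq p$ I would argue by induction that the enlarged spaces stay $F_k$-invariant: we have $F_k V_g^p \subseteq V_g^p$ because $F_k^p v = 0$, and when a basis vector $e_j$ is adjoined, its image $F_k e_j = e_{j-(n-k)}$ (or $0$) has strictly smaller index and therefore already lies in the current space, so $F_k V_g^j \subseteq V_g^j \subseteq V_g^{j+1}$ is preserved. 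This completion step, in which the basis vectors are added in the order dictated by the Jordan chains of $F_k$, is the main technical obstacle; it is also the point of closest contact with Lemma~\ref{lemma: circle group for skeletal}, since the Krylov vectors trace out the longest Jordan chain (which is what forces the first column to have its pivot in row $n$) while the remaining chains are appended in increasing order, mirroring the cycle-by-cycle construction of the permutation $w$ there.

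With the witness flag in hand the conclusion is immediate. Corollary~\ref{corollary: first column condition for Fk} applies verbatim to $\Hess(F_k, H_0)$, and in the specialization $k = n-1$ to the Peterson variety $\Hess(F_{n-1}, H_0)$, showing in each case that the maximal subtorus of $T$ acting is precisely the codimension-$(k-1)$ torus $K$ from Theorem~\ref{thm:X action by codim k-1 torus}. I expect essentially no difficulty in the final invocation; the entire weight of the argument rests on producing and validating the single flag described above.
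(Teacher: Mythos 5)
Your proposal is correct and takes essentially the same approach as the paper: both arguments reduce the theorem to Corollary~\ref{corollary: first column condition for Fk} by exhibiting an explicit flag $gB \in \Hess(F_k,H_0)$ whose first column has pivot in row $n$ and all $n$ entries nonzero, then obtain the Peterson variety as the case $k=n-1$. The only difference is in the witness construction and it is cosmetic: the paper restarts a new Jordan chain at the largest missing pivot each time $F_k g_j = 0$, whereas you run a single Krylov chain from the all-ones vector and complete the flag with standard basis vectors in increasing index order; both completions satisfy the Hessenberg conditions for $h(i)=i+1$ for the same reason.
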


\begin{proof}
Fix $H_0$ to be the subspace defined by $h(i)=i+1$ for all $i$. We construct a matrix $g$ satisfying the conditions of Corollary~\ref{corollary: first column condition for Fk} for each case. We proceed by induction on the number of columns in our matrix, confirming for each $j\leq k$ that the Hessenberg conditions are satisfied. 

The base case consists of the first column of $g$. Choose any nonzero entries $g_{i1}$ for the first column $g_1 = (g_{11}, g_{21}, \ldots, g_{n-1,1}, 1)^{tr}$.  Choose $g_2 = F_kg_1$.  By construction the matrix $\left(g_1 g_2 \right)$ satisfies the Hessenberg conditions.  

We repeat a similar process in general: 
\begin{enumerate}
\item For each $j$ with $F_k g_j \neq {\bf 0}$ we define $g_{j+1} = F_k g_j$.  The Hessenberg conditions are satisfied for $j$, because  $V_g^{h(j)} =V_g^{j+1}$ which clearly contains $g_{j+1}$.
\item For each $j$ with  $F_k g_j ={\bf 0}$ we define $g_{j+1}$ to be the standard basis vector $e_i$ with the largest possible pivot not represented among $\{g_1, g_2, \ldots, g_j\},$ in other words
\[ i = \max \{piv(g_1), piv(g_2), \ldots, piv(g_j)\}^c, \]
where the complement of the set is taken in $\{1,2,\ldots,n\}$.  The Hessenberg conditions are trivially satisfied for $j$, as ${\bf 0}\in V_g^{h(j)}.$ 
\end{enumerate}

We check that the matrix $g$ obtained in this fashion is invertible  by showing that the pivots of the $n$ columns are all distinct.  By construction the pivots of columns $g_1, g_2, g_3, \ldots$ are in rows 
\[n, n-(n-k), n-2(n-k), \ldots\] 
respectively.  In fact, whenever Step (1) is used to generate $g_{j+1}$ from $g_j$ the pivot rows of $g_{j+1}$ and $g_j$ are in the same congruence class modulo $n-k$.  Successive iterations of Step (1) generate pivots in rows 
\[piv(g_{j+1}), piv(g_{j+1})-(n-k), piv(g_{j+1})-2(n-k), \ldots\] 
until reaching the minimal nonnegative representative of the congruence class.  Step (2) is then iterated to add an $e_i$ with $k < i < n$ since those are the maximal $i \not \equiv n \mod (n-k)$. Step (2) ensures that every congruence class of $n-k$ is represented and Step (1) ensures that all nonnegative representatives of that congruence class in $\{1, 2, \ldots, n\}$ are represented.  So $g$ has pivots in each row.

Thus $g$ satisfies the hypotheses of Corollary~\ref{corollary: first column condition for Fk} proving the claim for $\Hess(F_k,H_0)$.  

If $H_0 \subseteq H'$ then $\Hess(F_k,H_0) \subseteq \Hess(F_k,H')$ by comparing Hessenberg conditions.  So $g$ also satisfies the hypotheses of Corollary~\ref{corollary: first column condition for Fk} for $\Hess(F_k,H')$.  Finally, the Peterson variety is the special case $\Hess(F_{n-1},H_0)$ so it, too, satisfies the hypotheses of Corollary~\ref{corollary: first column condition for Fk}. This proves the claim.
\end{proof}

\section{GKM Hessenberg varieties} \label{section: gkm spaces}

In this section we characterize many Hessenberg varieties that are GKM with respect to various tori.  Our first result is the observation that all the Hessenberg varieties that we proved admit the full $T$-action are GKM with respect to $T$.  In Theorem~\ref{theorem: F2 gkm conditions} we prove that there are GKM Hessenberg varieties with respect to other (smaller) tori as well.  We characterize all $\Hess(F_2,H)$ that are GKM with respect to a proper subtorus of $T$.  Intuitively, if $X$ is a linear operator for which $\Hess(X,H)$ admits the action of a subtorus $K \subseteq T$ for all $H$, we expect smaller $H$ to make $\Hess(X,H)$ GKM.  We use this to find the maximal $H$ with $\Hess(F_2,H)$ GKM.  Finally, we establish that several of the families of GKM Hessenberg varieties we identified are not Schubert varieties.  This resolves in the negative an open question about whether all GKM subvarieties of $G/B$ are unions of Schubert varieties.

\begin{theorem}
The following Hessenberg varieties $\Hess(X,H)$ are all GKM with respect to $T$:
\begin{itemize}
\item $X$ is skeletal nilpotent, and $h(j)=n-1$ for $j<n$.
\item $X$ is skeletal nilpotent, $h(1)=1$ and $h(2)=n$. 
\item $X=F_2$ and $h$ satisfies conditions (1), (2), (3), or (4) in Theorem~\ref{theorem: conditions for T-stability}.
\end{itemize}
\end{theorem}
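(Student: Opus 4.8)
The plan is to realize each of the three families as a closed $T$-stable subvariety of $G/B$ and then apply Lemma~\ref{lemma: subvariety of GKM with same torus action is GKM}. Since $G/B$ is itself GKM with respect to $T$ (its fixed points are the finitely many permutation flags, it has finitely many one-dimensional orbits, and its equivariant formality is classical; see \cite{Tym16b}), once we know $\Hess(X,H)$ is $T$-stable the lemma immediately yields that it is GKM with respect to $T$. The only extra ingredient is equivariant formality of $\Hess(X,H)$ itself, and this is automatic for every Hessenberg variety because they are paved by complex affines and so have no odd-degree cohomology, as recorded in the remark following the definition of GKM space. Thus the entire content of the theorem reduces to verifying $T$-stability of each of the three families.

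Two of the three families are immediate from earlier results. For the family with $h(1)=1$ and $h(2)=n$, $T$-stability is exactly the conclusion of Corollary~\ref{corollary: first column of Xg empty}, which already treats an arbitrary skeletal nilpotent $X$. For the family $X=F_2$ with $h$ satisfying one of the conditions (1)--(4), $T$-stability is precisely the content of Theorem~\ref{theorem: conditions for T-stability}, whose statement characterizes exactly these Hessenberg functions. So for these two families I would simply cite the relevant result and pass to the GKM conclusion through Lemma~\ref{lemma: subvariety of GKM with same torus action is GKM}.

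The remaining family --- $X$ skeletal nilpotent with $h(j)=n-1$ for $j<n$ (and hence $h(n)=n$) --- generalizes Theorem~\ref{thm:Hn-1} from $F_k$ to all skeletal nilpotents, and this is the step I would write out in full. The clean way to do it is to observe that the Hessenberg conditions force $V_g^{n-1}$ to be an $X$-invariant hyperplane: if $gB\in\Hess(X,H)$ then $Xg_j\in V_g^{n-1}$ for every $j<n$, and since $g_1,\dots,g_{n-1}$ span $V_g^{n-1}$ this says exactly that $X(V_g^{n-1})\subseteq V_g^{n-1}$. Because $X$ is nilpotent, the operator it induces on the one-dimensional quotient $\mathbb{C}^n/V_g^{n-1}$ is nilpotent, hence zero, so $\operatorname{Im}(X)\subseteq V_g^{n-1}$. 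For a skeletal nilpotent $X$ the image $\operatorname{Im}(X)=\langle e_i : X \text{ has a nonzero } i\text{th row}\rangle$ is a coordinate subspace and is therefore preserved by every ${\bf t}\in T$.

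Finally I would combine this with Lemma~\ref{dumbactioncondition}. For ${\bf t}\in T$, set $Y={\bf t}^{-1}X{\bf t}$, so that ${\bf t}gB\in\Hess(X,H)$ if and only if $gB\in\Hess(Y,H)$. Since $\operatorname{Im}(Y)={\bf t}^{-1}\operatorname{Im}(X)=\operatorname{Im}(X)$, where the last equality holds because $\operatorname{Im}(X)$ is a coordinate subspace and ${\bf t}^{-1}$ is diagonal, we get $Yg_j\in\operatorname{Im}(Y)=\operatorname{Im}(X)\subseteq V_g^{n-1}$ for all $j<n$, while the condition at $j=n$ is vacuous because $h(n)=n$. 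Hence $gB\in\Hess(Y,H)$, so ${\bf t}gB\in\Hess(X,H)$; as this holds for all $gB\in\Hess(X,H)$ and all ${\bf t}\in T$, the variety is $T$-stable. I expect this invariant-subspace observation to be the crux of the argument: it replaces the pivot-by-pivot bookkeeping in the proof of Theorem~\ref{thm:Hn-1} by a single structural fact, and it is precisely what makes the reasoning go through uniformly for every skeletal nilpotent rather than only for $F_k$. With $T$-stability established for all three families, Lemma~\ref{lemma: subvariety of GKM with same torus action is GKM} completes the proof.
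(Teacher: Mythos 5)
Your proposal is correct, and its overall skeleton (verify $T$-stability, then pass to the GKM conclusion via Lemma~\ref{lemma: subvariety of GKM with same torus action is GKM}, with equivariant formality supplied by the affine paving) is exactly the paper's: the paper's proof is a two-line citation of Corollary~\ref{corollary: last row of H empty}, Corollary~\ref{corollary: first column of Xg empty}, and Theorem~\ref{theorem: conditions for T-stability}. Where you genuinely diverge is on the first bullet, and your route is arguably better matched to the statement than the paper's own. The theorem asserts $T$-stability for \emph{every} skeletal nilpotent $X$ with $h(j)=n-1$ for $j<n$, but the results the paper cites are stated only for $X=F_{n-1}$ (Corollary~\ref{corollary: last row of H empty}) and $X=F_k$ (Theorem~\ref{thm:Hn-1}), whose proofs track pivots of the specific matrices $F_k$; the reader is left to extrapolate to general skeletal nilpotents. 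Your invariant-subspace argument closes that gap cleanly: the Hessenberg conditions force $X(V_g^{n-1})\subseteq V_g^{n-1}$, nilpotence kills the induced map on the one-dimensional quotient so $\operatorname{Im}(X)\subseteq V_g^{n-1}$, and for skeletal $X$ (at most one nonzero entry per column) the image is the coordinate subspace spanned by the $e_i$ with nonzero $i$th row, which is $T$-stable and unchanged under $Y={\bf t}^{-1}X{\bf t}$; combined with Lemma~\ref{dumbactioncondition} and the vacuous condition at $j=n$, this gives $T$-stability uniformly. What the paper's pivot bookkeeping buys in exchange is the finer structural conclusion, recorded in the remark after Theorem~\ref{thm:Hn-1}, that these varieties are unions of Schubert varieties---information your quotient argument does not directly yield, though it is not needed for the GKM claim. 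Your handling of the other two bullets matches the paper exactly.
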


\begin{proof}
We proved these were $T$-stable in Corollary~\ref{corollary: last row of H empty}, Corollary~\ref{corollary: first column of Xg empty}, and Theorem~\ref{theorem: conditions for T-stability}, respectively.  By Lemma~\ref{lemma: subvariety of GKM with same torus action is GKM} they are thus GKM with respect to $T$.
\end{proof}

Some of the Hessenberg varieties without the full torus action are nonetheless GKM with respect to a smaller subtorus of $T$. We use the tori $K$ from Theorem~\ref{thm:X action by codim k-1 torus} in what follows.  The next lemma proves that Hessenberg varieties satisfy the first condition of a GKM space with respect to this $K$-action, namely their fixed points are isolated.

\begin{lemma}\label{lemma:K fixed points isolated X skeletal}
Let $X$ be skeletal nilpotent and $K$ be the codimension-(k-1) torus specified in Theorem~\ref{thm:X action by codim k-1 torus}. The fixed point set $\Hess(X,H)^K$ is finite.
\end{lemma}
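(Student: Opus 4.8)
The plan is to avoid any new fixed-point computation by reducing to the rank-one case already handled in Section~\ref{section: circle actions}. The guiding observation is that $K$, though of codimension $k-1$, secretly contains the regular circle $wSw^{-1}$ produced in Lemma~\ref{lemma: circle group for skeletal}. Once that containment is in hand, the finiteness of $\Hess(X,H)^K$ will follow formally from the finiteness of $\Hess(X,H)^{wSw^{-1}}$, since a larger torus has a smaller fixed locus.

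First I would recall the defining feature of the permutation $w$ from Lemma~\ref{lemma: circle group for skeletal}, namely that $w(X(i)) = w(i)+1$ for every nonzero row $i$ of $X$. An arbitrary element ${\bf s}\in wSw^{-1}$ has diagonal entries $(t^{w(1)},\ldots,t^{w(n)})$, so for each nonzero row $i$ we compute
\[
\frac{s_{X(i)}}{s_i} = t^{\,w(X(i)) - w(i)} = t .
\]
Because this common value $t$ is independent of $i$, the element ${\bf s}$ satisfies all the relations $t_{i_1}^{-1}t_{X(i_1)} = \cdots = t_{i_k}^{-1}t_{X(i_k)}$ that define $K$ in Equation~\eqref{torusconstraintX}. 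This shows $wSw^{-1}\subseteq K$, and this verification is essentially the same calculation already performed inside the proof of Lemma~\ref{lemma: circle group for skeletal}.

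To finish, I would note that $S$ is regular, hence so is its conjugate $wSw^{-1}$, so $K$ contains a regular element. Since every $K$-fixed point is in particular fixed by this regular subtorus, we have the containment $\Hess(X,H)^K \subseteq \Hess(X,H)^{wSw^{-1}}$, and the right-hand side is finite by Corollary~\ref{corollary: circle isolated fixed points}. Therefore $\Hess(X,H)^K$ is finite. There is no real obstacle here beyond recognizing that the circle of Lemma~\ref{lemma: circle group for skeletal} lands inside $K$; the one thing worth flagging is that the common ratio value $t$ appearing in that lemma is exactly what the equations cutting out $K$ require, so the containment is immediate rather than accidental.
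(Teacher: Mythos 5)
Your proposal is correct and is essentially the paper's own argument: both establish the containment $wSw^{-1}\subseteq K$ by checking that the common ratio $t = t^{w(X(i))-w(i)}$ satisfies the relations of Equation~\eqref{torusconstraintX}, and then deduce $\Hess(X,H)^K \subseteq \Hess(X,H)^{wSw^{-1}}$, which is finite by Corollary~\ref{corollary: circle isolated fixed points}. No gaps; nothing further is needed.
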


\begin{proof}
Lemma~\ref{lemma: circle group for skeletal} guarantees a permutation $w$ such that $wSw^{-1}$ acts on $\Hess(X,H)$.  Every ${\bf t}\in wSw^{-1}$ is given by $(t^{w(1)}, t^{w(2)}, \dots, t^{w(n)})$ for some constant $t \in \mathbb{C}^*$ and furthermore $w$ is constructed so that $t^{-w(i)}t^{w(X(i)}=t$ whenever $X$ is nonzero in row $i$. Any such ${\bf t}$ also satisfies the equations $\eqref{torusconstraintX}$ for all nonzero rows $i$ of $X$ by construction. It follows that $wSw^{-1}\subseteq K$.
By Corollary~\ref{corollary: circle isolated fixed points}, the $wSw^{-1}$-fixed points in $\Hess(X,H)$ are finite and thus so are the $K$-fixed points.
\end{proof}

The following theorem describes a family of Hessenberg varieties for $F_2$ that are GKM with respect to the codimension-one torus $K$ from Theorem~\ref{thm:X action by codim k-1 torus} but do not admit a full $T$-action.  Moreover, we will show that no other Hessenberg varieties for $F_2$ are GKM with respect to $K$.

\begin{theorem} \label{theorem: F2 gkm conditions}
Fix $n \geq 4$. Let $K$ be the codimension-1 torus that acts on $\Hess(F_2,H)$ as specified in Theorem~\ref{thm:X action by codim k-1 torus}. 
 Then $\Hess(F_2,H)$ is GKM with respect to $K$ if and only if there exists some $i \in \{1,\dots, n-2\}$ so that  
\[ \begin{array}{rl} h(i) &\leq i+2, \\ h(i+1) &\leq i+2, \end{array}\]
and otherwise $h(j)=j$.  
\end{theorem}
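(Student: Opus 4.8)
The plan is to reduce the GKM property to a single condition—finiteness of the one-dimensional $K$-orbits—and then control those orbits cell by cell. Since Hessenberg varieties are paved by complex affines, $\Hess(F_2,H)$ is equivariantly formal for every torus action, so the third GKM axiom is automatic; and by Lemma~\ref{lemma:K fixed points isolated X skeletal} the fixed set $\Hess(F_2,H)^K$ is already finite, so the first axiom holds as well. Thus the entire content of the statement is the second axiom: $\Hess(F_2,H)$ is GKM with respect to $K$ if and only if $K$ has only finitely many one-dimensional orbits on it.

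To analyze these orbits I would use the paving of $\Hess(F_2,H)$ by its Hessenberg Schubert cells $\mathcal C_w\cap\Hess(F_2,H)$ for $w\in W\cap\Hess(F_2,H)$. Each such cell is $K$-stable, being the intersection of the $T$-stable cell $\mathcal C_w$ with the $K$-stable variety $\Hess(F_2,H)$, and for $X=F_2$ the Hessenberg conditions cut it out as an affine-linear subspace of $\mathcal C_w\cong\mathbb C^{\ell(w)}$ carrying a linear $K$-action; its weights are the restrictions to $K$ of the inversion roots $e_{w(p)}-e_{w(q)}$ that survive the conditions of Theorem~\ref{theorem: main column test}. Within a single affine cell with weights $\chi_1,\dots,\chi_d$, the locus of points whose orbit has dimension at most one is finite precisely when the $\chi_i$ are pairwise linearly independent over $\mathbb Q$ as characters of $K$: a parallel pair spans a coordinate plane swept out by a $\mathbb P^1$-family of one-dimensional orbits. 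Since there are finitely many cells, $\Hess(F_2,H)$ is GKM with respect to $K$ if and only if no cell carries two coordinates whose $K$-weights are parallel.

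The next step is to make ``parallel on $K$'' explicit. Writing $K=\ker\mu$ for the primitive character $\mu=e_2+e_{n-1}-e_1-e_n$, two inversion roots $\alpha,\beta$ restrict to parallel characters of $K$ exactly when some nonzero integer combination $p\alpha+q\beta$ lies in $\mathbb Z\mu$; solving this over roots of the form $e_a-e_b$ yields a short explicit list of dangerous pairs, all built from the distinguished indices $1,2,n-1,n$. For each such pair I would run the column test of Theorem~\ref{theorem: main column test} at a general fixed point $w$ to record exactly when both members survive the Hessenberg conditions, obtaining inequalities relating the one-line notation of $w$ to the values of $h$. Ranging over all $w\in W\cap\Hess(F_2,H)$ then turns the question into a purely combinatorial one: decide for which $h$ there exists a fixed point realizing a surviving dangerous pair.

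For the two directions I would argue as follows. For sufficiency, assuming $h$ has the stated near-diagonal window form, the aim is to show that the surviving cell coordinates at every fixed point are so constrained that no two of them can satisfy the $\mu$-relation, so that no cell carries a parallel pair and $\Hess(F_2,H)$ is GKM (this also recovers, via Lemma~\ref{lemma: subvariety of GKM with same torus action is GKM}, the $T$-stable cases). For necessity, given any $h$ not of this form, I would exhibit an explicit permutation flag $wB\in\Hess(F_2,H)$ together with a dangerous pair both of whose roots survive, producing an infinite family of one-dimensional orbits. I expect the main obstacle to be exactly this matching: carrying out the optimization over all fixed points and all $h$ so that ``no surviving dangerous pair anywhere'' is shown equivalent to the precise window inequalities $h(i),h(i+1)\le i+2$ with $h(j)=j$ otherwise—pinning down the boundary values and, in the necessity direction, verifying that the flags produced genuinely satisfy the Hessenberg conditions rather than merely the linearized tangent conditions.
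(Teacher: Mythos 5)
Your skeleton is the same as the paper's: equivariant formality is free from the affine paving, isolated fixed points come from Lemma~\ref{lemma:K fixed points isolated X skeletal}, so everything reduces to finiteness of one-dimensional $K$-orbits, which you then probe in normalized Schubert coordinates, where $u_{ij}$ carries the restriction to $K$ of the character $t_j/t_i$ (exactly the paper's computation ${\bf t}uwB=({\bf t}u{\bf t}^{-1})wB$), detecting infinite families through pairs of coordinates whose weights become proportional on $K=\ker\mu$ with $\mu=e_2+e_{n-1}-e_1-e_n$. Two caveats, though. First, your claim that each Hessenberg Schubert cell is an affine-linear coordinate subspace with linear $K$-action and weights given by surviving inversion roots is unjustified in general: the membership conditions~\eqref{eq: matrix Hessenberg conditions} are polynomial, not linear, in the cell coordinates (the entries of $A_j^{-1}$ are rational functions of them); the paper sidesteps this by arguing directly that the two coordinates it needs, $u_{2,n}$ and $u_{1,n-1}$, are unconstrained because $F_2(uw)$ does not involve them. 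Second, and decisively, your proposal stops where the theorem's content begins: the ``matching'' you defer---deciding for each $h$ and each fixed point whether a dangerous pair survives the Hessenberg conditions---is precisely what the paper's proof spends its length on (explicit permutations realizing the pair $u_{2,n},u_{1,n-1}$ when $h$ violates the window condition, and a pigeonhole argument on columns $j,j+1$ for the window $h$). As written, the proposal is a plan with the hard part open.

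Worse, if you actually run your own recipe $p\alpha+q\beta\in\mathbb{Z}\mu$, you get \emph{two} dangerous pairs, not one: $\{e_{n-1}-e_1,\,e_n-e_2\}$ (coordinates $u_{1,n-1},u_{2,n}$, the only pair the paper treats) and also $\{e_2-e_1,\,e_n-e_{n-1}\}$ (coordinates $u_{1,2},u_{n-1,n}$), since $(e_2-e_1)-(e_n-e_{n-1})=\mu$, i.e.\ $t_2/t_1=t_n/t_{n-1}$ on $K$. The paper's proof asserts that any pair of nonzero entries other than $\{(2,n),(1,n-1)\}$ forces a two-dimensional $K$-orbit; your weight computation contradicts this for the second pair, and that pair genuinely survives inside the claimed GKM class. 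Concretely, take $n=4$ and $h=(3,3,3,4)$ (the case $i=1$ of the theorem): the flags with columns $(e_2+xe_1,\ e_1,\ e_4+ye_3,\ e_3)$ lie in $\Hess(F_2,H)$ for all $x,y$, because $F_2$ applied to the third column is $e_2+ye_1=(e_2+xe_1)+(y-x)e_1\in V_g^3$ and the remaining conditions are vacuous; on $K=\{t_3/t_1=t_4/t_2\}$ both $x$ and $y$ rescale by $t_1/t_2$, so $x/y$ is an orbit invariant and one obtains an infinite family of one-dimensional $K$-orbits. Analogous flags exist for every $n\geq 4$ and every window position $i$ (place $e_n$ and $e_2$ in columns $i,i+1$ with $e_1$ just after and $e_{n-1}$ later). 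So the matching step you deferred cannot close to the stated equivalence: carried out honestly, your systematic dangerous-pair analysis---which in this respect is sharper than the paper's---shows that the sufficiency direction fails for the second pair, and that both your argument and the theorem's characterization would need to be repaired to exclude fixed points where $u_{1,2}$ and $u_{n-1,n}$ are simultaneously free.
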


\begin{proof}
Lemma~\ref{lemma:K fixed points isolated X skeletal} showed that the fixed point set $\Hess(F_2,H)^K$ is finite so we only need to prove that $\Hess(F_2,H)$ has a finite number of one-dimensional $K$-orbits.

First we show that $\Hess(F_2,H)$ has a finite number of one-dimensional $K$-orbits if and only if there is no $gB \in \Hess(F_2,H)$ with $g=uw$ where $u$ is upper-triangular with ones along the diagonal and nonzero in both positions $(2,n)$ and $(1,n-1)$.  Indeed, suppose $u$ is an upper-triangular matrix with ones along the diagonal.  Note that 
$$
{\bf t} uwB  = {\bf t} u {\bf t}^{-1}{\bf t} w {\bf t^{-1}}B = ({\bf t}^{-1} u {\bf t})wB,
$$
where the conjugate ${\bf t}^{-1}u{\bf t}$ is also upper-triangular with ones along the diagonal.  Thus the normalized Schubert form for the flag ${\bf t}uwB$ is ${\bf t}^{-1}u{\bf t}w$.  If $i<j$ then entry $(i,j)$ of ${\bf t}^{-1}u{\bf t}$ is 
\[ \frac{t_j}{t_i}u_{ij}. \]
Suppose there are two entries $\{(i,j), (i',j')\} \neq \{(2,n),(1,n-1)\}$ with $u_{ij}$ and $u_{i'j'}$ both nonzero.  Then the $K$-orbit of $u$ is at least two-dimensional since the $K$-orbit agrees with the $T$-orbit in these two coordinates.  This means that the one-dimensional orbits of $K$ occur either when exactly one entry $u_{ij}$ is nonzero and other entries off the diagonal vanish, or possibly when the two entries $u_{2,n}$ $u_{1,n-1}$ are both nonzero and other entries off diagonal vanish. 

If $u$ has one nonzero entry $u_{ij}$, then different values of $u_{ij}$ are just multiples of one another and so lie in the same $K$-orbit. Thus there is at most one orbit for each entry of $u$ and so a finite number of these one-dimensional orbits. 

Now suppose that $u_{2,n}$ and $u_{1,n-1}$ are both nonzero. In this case the $K$-action on these coordinates is given by
\[ \left( \frac{t_n}{t_2} u_{2,n}, \frac{t_{n-1}}{t_1} u_{1,n-1}  \right), \]
which is a one-dimensional orbit since $t_n/t_2 = t_{n-1}/t_1$. 
Since each $uw$ is in normalized Schubert form, two orbits are distinct if the ratio $u_{2,n}/u_{1,n-1}$ is distinct. 
If there are an infinite number of distinct orbits, then
$\Hess(F_2,H_i)$ is not GKM. 

The image $F_2(uw)$ does not depend on  $u_{2,n}$ or $u_{1,n-1}$ by construction of $F_2$ and no conditions can arise on $u_{2,n}$ or $u_{1,n-1}$ in the linear system of Equation~\eqref{equation: conditions on entries}. 
So if any element of the Hessenberg Schubert cell for $w$ has both $u_{2,n}$ and $u_{1,n-1}$ nonzero then there are an infinite number of distinct ratios $u_{2,n}/u_{1,n-1}$ in the Hessenberg Schubert cell.

We have proven that $\Hess(F_2,H)$ is GKM with respect to $K$ if and only if at most one of entries $u_{2,n}, u_{1,n-1}$ is nonzero for any flag $uwB \in \Hess(F_2,H)$. We now identify a class of spaces $H$ for which $\Hess(F_2,H)$ is guaranteed not to be GKM. 

Suppose there are $j, j'$ so that $h$ satisfy one of the following two conditions:
\begin{itemize}
\item $h(j) \geq j+1$ and $h(j') \geq j'+1$, with $j'-j>1$, and
\item $h(j) \geq j+2$ and $h(j+1)\geq j+3$.
\end{itemize}
In either case, we find a permutation matrix $w$ that satisfies the Hessenberg conditions, and for which there exists $uwB\in \Hess(X,H)$ with nonzero entries $u_{2,n}, u_{1,n-1}$. 

The Hessenberg Schubert cell $\mathcal{C}_w \cap \Hess(X,H)$ contains an element $uwB$ with entries $u_{2,n}, u_{1,n-1}$ both nonzero if and only if
the column of $uw$ with pivot in row $2$ occurs after the column with pivot in row $n$ and similarly the column with pivot in row $1$ occurs after the column with pivot in row $n-1$. This is independent of $u$ and just a property of $w$, occurring when the following equations hold:
\begin{equation}\label{uw in cell}
w^{-1}(2)>j \quad \mbox{and}\quad w^{-1}(1)>j'.
\end{equation}

Suppose $h$ satisfies the first condition. 
Let $w$ be the permutation satisfying
$w(j)=n, w(j+1)=2, w(j')=n-1, w(j'+1)=1$ and with all other entries in increasing order. Whenever $h(j) \geq j+1$ and $h(j') \geq j'+1$ the vectors $F_2w_j$ and $F_2w_{j'}$ both satisfy the Hessenberg conditions. Thus $wB \in \Hess(F_2,H)$. Furthermore $w^{-1}(2)=j+1>j$ and $w^{-1}(1)=j'+1>1$ so $uwB\in \mathcal{C}_w \cap \Hess(F_2,H)$ includes those with $(u_{2,n}, u_{1,n-1})$ any pair of complex numbers. 

Suppose $h$ satisfies the second condition. 
Let $w$ be the permutation satisfying
$w(j)=n, w(j+1)=n-1, w(j+2)=2,w(j+3)=1$    and with all other entries in increasing order.   Whenever
$h(j) \geq j+2$ and $h(j+1)\geq j+3$ the vectors $F_2w_{j}$ and $F_2w_{j+1}$ both satisfy the Hessenberg conditions, implying $wB \in \Hess(F_2,H)$.  Furthermore $w^{-1}(2) =j+2 > j$ and $w^{-1}(1) = j+3>(j+1)+1$. Thus both $u_{2,n}, u_{1,n-1}$ may be arbitrary nonzero entries among elements $uwB$ of the Hessenberg Schubert cell $\mathcal{C}_w \cap \Hess(F_2,H)$.

We have shown that if $h$ satisfies either of the two bullet points above for any $j$, then $\Hess(F_2,H)$ is not GKM.

On the other hand, the maximal $H_j$ that violates both conditions satisfies $h(j)=h(j+1)=j+2$ for some $j$ and otherwise is the identity function.  We show that $\Hess(F_2,H)$ is a GKM space. We need to confirm that no permutation $w$ with $wB\in \Hess(F_2, H_j)$ has both $u_{2,n}, u_{1,n-1}$ nonzero in its Schubert cell.  If $u_{2,n}\neq 0$ then $e_2$ occurs to the right of $e_n$ in $w$. This can only occur if $e_n$ occurs in columns $j$ or $j+1$.  Similarly if $u_{1,n-1}\neq 0$ then $e_1$ occurs to the right of $e_{n-1}$ in $w$ so $e_{n-1}$ must be one of columns $j$ or $j+1$. But if both $e_n, e_{n-1}$ are in columns $j, j+1$ then we don't have enough columns to put $e_1, e_2$ while respecting the Hessenberg conditions without putting $e_1$ or $e_2$ in one of the first $j-1$ columns, and thus to the left of either $e_{n-1}$ or $e_n$. 
\end{proof}

\begin{remark}
Note that when $n \geq 6$ the Hessenberg varieties $\Hess(F_2,H)$ from the previous theorem do not admit a full torus action.  It follows that these Hessenberg varieties are GKM with respect to the codimension-one torus $K$ but {\it not} Schubert varieties. For more complete results, see recent work of Escobar, Precup, and Shareshian identifying which Hessenberg varieties are Schubert varieties \cite{EPS}.
\end{remark}

 Finally, we observe that some Hessenberg varieties with full torus actions may not be Schubert varieties.
 
 \begin{proposition} \label{proposition: Hess with full torus action}
 If $\Hess(X,H)$ admits an action of the full torus $T$ then the intersection of each Schubert cell $\mathcal{C}_w \cap \Hess(X,H)$ is an affine space (possibly empty) in which each coordinate is either zero or free. 
 \end{proposition}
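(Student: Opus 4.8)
The plan is to analyze one Hessenberg Schubert cell at a time in the coordinates coming from normalized Schubert form, and to show that the hypothesis of a full torus action forces the Hessenberg conditions cutting out that cell to collapse into conditions that merely set some coordinates to zero. Concretely, I would fix a permutation $w$ with $Y := \mathcal{C}_w \cap \Hess(X,H) \neq \emptyset$ (the empty case is trivially an affine space), write a general element of $\mathcal{C}_w$ in normalized Schubert form with free coordinate entries, and recall from Equation~\eqref{eq: matrix Hessenberg conditions} that $gB \in \Hess(X,H)$ if and only if $B_j A_j^{-1}\vec{v}_j = \vec{v}'_j$ for every column $j$, where $A_j, B_j, \vec{v}_j, \vec{v}'_j$ are the pivot matrix, dependent matrix, solution vector, and constraint vector of the system $\langle g_1 \cdots g_{h(j)} \mid Xg_j\rangle$.

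The key step is to exploit $T$-stability through Corollary~\ref{corollary:B nonzero}. Since $\Hess(X,H)$ is $T$-stable, the contrapositive of that corollary shows that $B_j$ is the zero matrix for every $gB \in \Hess(X,H)$ and every column $j$; in particular this holds at every point of $Y$. Substituting $B_j = 0$ into Equation~\eqref{eq: matrix Hessenberg conditions} then forces $\vec{v}'_j = 0$ on $Y$ as well. I would next observe that both of these are \emph{linear} coordinate conditions: the entries of $B_j$ are entries of $g$ lying in non-pivot rows of the relevant columns, so each is either a coordinate or a forced zero (it cannot be a pivot $1$, since its row is not a pivot row of the subsystem), and similarly each entry of $\vec{v}'_j$ is a scalar multiple of an entry $g_{X(i),j}$, again a coordinate or zero (it cannot be a nonzero constant, for otherwise $\vec{v}'_j$ could never vanish, contradicting $Y \neq \emptyset$). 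Hence $Y$ is contained in the coordinate subspace $L$ obtained by setting to zero exactly those coordinates that appear in some $B_j$ or some $\vec{v}'_j$, and leaving all remaining coordinates unconstrained.

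To finish I would prove the reverse inclusion $L \subseteq Y$. Taking any $g \in \mathcal{C}_w$ whose distinguished coordinates are zero and whose remaining coordinates are arbitrary, every $B_j$ and every $\vec{v}'_j$ vanishes by construction, so Equation~\eqref{eq: matrix Hessenberg conditions} reads $0 = 0$ for each $j$. Since $A_j$ is always invertible, the equivalent triangular system of Lemma~\ref{lemma:pivot dependent solution constraint} is consistent precisely when $B_j A_j^{-1}\vec{v}_j = \vec{v}'_j$, so $Xg_j \in V_g^{h(j)}$ for all $j$ and $gB \in \Hess(X,H)$. Therefore $Y = L$ is exactly the coordinate subspace in which the distinguished coordinates are zero and all others are free; this is the statement, and it coincides with saying that every column of $g$ falls into one of the vacuous cases of Corollary~\ref{corollary: trivial conditions to satisfy}.

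The part requiring the most care, and the main obstacle, is the bookkeeping in the middle paragraph: one must confirm that ``$B_j = 0$'' and ``$\vec{v}'_j = 0$'' genuinely amount to setting individual coordinates to zero rather than imposing nonlinear relations among the surviving free coordinates. This is where the skeletal nilpotent structure enters (so that Theorem~\ref{theorem: main column test} and Corollary~\ref{corollary:B nonzero} apply and each entry of $Xg_j$ is a single rescaled entry of $g$), and where the pivot-position analysis of normalized Schubert form is used to rule out forced $1$'s appearing in $B_j$ or $\vec{v}'_j$. Once this is pinned down, the sufficiency direction is routine.
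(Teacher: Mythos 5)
Your proposal is correct, and it reaches the conclusion by a genuinely different (and in some respects more complete) route than the paper. The paper's own proof works directly with Equation~\eqref{equation: conditions on entries} from Theorem~\ref{theorem: main column test}: it restricts to a row $i$, views the result as a linear relation $\sum_{\ell}({\bf s}_j)_{\ell}c_{i\ell} = ({\bf s'}_j)_i d_i$ in the torus parameters, and argues that since the relation must hold for \emph{all} ${\bf t}\in T$ the coefficients $c_{i\ell}, d_i$ must vanish except in rows not in the image of $X$; it then stops, leaving implicit both why coefficient vanishing translates into ``each coordinate is zero or free'' and why the resulting coordinate subspace sits entirely inside $\Hess(X,H)$. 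You instead invoke the contrapositive of Corollary~\ref{corollary:B nonzero} as a black box (which is legitimate, since that corollary is itself proved by exactly the character-matching the paper's proof redoes) to get $B_j=0$ pointwise, deduce $\Vec{v}'_j=0$ from Equation~\eqref{eq: matrix Hessenberg conditions}, and then do the bookkeeping the paper omits: you identify each entry of $B_j$ and $\Vec{v}'_j$ as a single cell coordinate or a forced zero (your observation that a pivot $1$ cannot occur in $\Vec{v}'_j$ on a nonempty cell, since a nonzero constant could never vanish, is a nice touch), and you prove the reverse inclusion $L\subseteq Y$ via Lemma~\ref{lemma:pivot dependent solution constraint}, showing the Hessenberg Schubert cell is \emph{exactly} the coordinate subspace on which the vacuous cases of Corollary~\ref{corollary: trivial conditions to satisfy} hold. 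Two caveats, both shared with the paper rather than specific to you: the argument requires $X$ skeletal nilpotent (so that Theorem~\ref{theorem: main column test} and Corollary~\ref{corollary:B nonzero} apply), which you correctly flag even though the proposition's statement omits it; and both proofs quietly rely on the paper's remark that the decompositions $A_j, B_j$ are determined by the pivot positions and hence constant across the cell, which matters at special points where $piv(Xg_j)$ could drop. Net effect: the paper's route is self-contained but terse; yours outsources the torus-variation step to an established corollary and in exchange delivers the precise set-theoretic identification of $\mathcal{C}_w\cap\Hess(X,H)$ that the statement actually asserts.
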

 
\begin{proof}
Consider the equation $B_jA_j^{-1}{\bf s}_j\Vec{v}_j = {\bf s'}_j B_jA_j^{-1}\Vec{v}_j$ from Theorem~\ref{theorem: main column test}.  If the full torus $T$ acts on $\Hess(X,H)$ then this equation holds for each row $i$.  Restricting to $i$ gives 
\[\sum_{\ell} ({\bf s}_{j})_{\ell} c_{i\ell} = ({\bf s'}_j)_i d_i,\]
where $d_i$ and the $c_{i,\ell}$ are constants depending on $B_j, A_j^{-1}, \Vec{v}_j,$ and $\Vec{v}_j'$.  This is a linear equation on the torus entries unless the $c_{i \ell}, d_i$ are identically zero in any entries where ${\bf s}_j, {\bf s'}_j$ are not identically one.  The latter correspond to rows that are not in the image of $X$ and so impose no conditions on the entries of $g$.  This proves the claim.
\end{proof}

\begin{remark} \label{remark: non-Schubert GKM}
It is possible to find Hessenberg varieties that satisfy Proposition~\ref{proposition: Hess with full torus action} and that are not unions of Schubert varieties.  For instance, consider the case when $X$ is the subregular nilpotent given by $X = \sum_{i=2}^{n-1} E_{i,i+1}$ and when $H = \mathfrak{b}$ or equivalently $h(i)=i$ for all $i$. (Note that $X$ is not any of the $F_k$.) This is a Springer fiber and is a sequence of copies of $\mathbb{P}^1$ each joined successively at a point.  The cells for $n=4$ are given in Figure~\ref{figure: example of Hessenberg varieties with full torus action}. The reader can verify that several of these Hessenberg Schubert cells are proper subspaces of the corresponding Schubert cell in the full flag variety.
\begin{figure}[h]
\[
\left( \begin{array}{cccc} 1 & 0 & 0 & 0 \\ 0 & 1 & 0 & 0 \\ 0 & 0 & 1 & 0 \\ 0 & 0 & 0 & 1 \end{array} \right), 
\left( \begin{array}{cccc} a & 1 & 0 & 0 \\ 1 & 0 & 0 & 0 \\ 0 & 0 & 1 & 0 \\ 0 & 0 & 0 & 1 \end{array} \right),
\left( \begin{array}{cccc} 0 & b & 1 & 0 \\ 1 & 0 & 0 & 0 \\ 0 & 1 & 0 & 0 \\ 0 & 0 & 0 & 1 \end{array} \right),
\left( \begin{array}{cccc} 0 & 0 & c & 1 \\ 1 & 0 & 0 & 0 \\ 0 & 1 & 0 & 0 \\ 0 & 0 & 1 & 0 \end{array} \right)
\]
\caption{Cells for the subregular Springer fiber when $n=4$} \label{figure: example of Hessenberg varieties with full torus action}
\end{figure}

Moreover, the Hessenberg variety shown in Figure~\ref{figure: example of Hessenberg varieties with full torus action} cannot be translated to a homeomorphic Hessenberg variety that is simply the union of the four Schubert cells of dimension at most one.  Indeed, suppose there exists an element $g \in \G$ so that $g\Hess(X,\mathfrak{b})$ consists of the four Schubert cells corresponding to permutations $id, (12), (23), (34)$.  As in the proof of Corollary~\ref{corollary: first column of Xg empty}, we know that 
\[Xg\Vec{v}_1 \in \langle g\Vec{v}_1 \rangle \textup{  if and only if  } \Vec{v}_1 \in g^{-1} \ker X.\]  
Since $g$ is invertible we conclude $g^{-1} \ker X$ is two-dimensional.  Knowing the Schubert cells $id$ and $(12)$ are both in $g \Hess(X,\mathfrak{b})$ implies that their first columns $e_1$ and $ae_1+e_2$ are both in $g^{-1} \ker X$.  Since $e_1$ and $ae_1+e_2$ are linearly independent
\[\langle e_1, ae_1+e_2 \rangle = \langle e_1, e_2 \rangle =  g^{-1} \ker X.\]
Examining the second column of the Schubert cell corresponding to $(23)$, we conclude $ae_2+e_3 \in X^{-1} \ker X$.  Since $X^{-1} \ker X$ is three-dimensional, we can actually conclude $X^{-1} \ker X = \langle e_1, e_2, e_3 \rangle$.  Finally, the first two columns of the Schubert cell corresponding to $(34)$ are $e_1$ and $e_2$ and the third column is $ae_3+e_4$.  But we just showed that $X(ae_3+e_4) \not \in \ker X$ which is the span of the first two columns.

This argument can be extended to conclude that subregular Springer fibers for $n \geq 4$ are not unions of the $0$- and $1$-dimensional Schubert varieties despite both having the same Betti numbers.
\end{remark}

\section{Questions} \label{section: questions}

This paper only begins the systematic study of torus actions and GKM theory for Hessenberg varieties.  We end by posing open questions raised by this work.

\subsection{Torus actions and GKM spaces}

The first set of questions are immediate extensions of the results in this paper.

\begin{question}
Identify conditions on $H$ so that $\Hess(F_k,H)$ admits the action of a rank-$m$ subtorus of $T$.  Can the conditions be generalized to other nilpotent operators $X$?
\end{question}

\begin{question}
Suppose $X$ is nilpotent and $\Hess(X,H)$ has the action of a codimension-$(k-1)$ subtorus $K \subseteq T$.  When is $\Hess(X,H)$ GKM with respect to $K$?  
\end{question}

These questions could be more illuminating---and still valuable to researchers---for special families of $H$ or $X$.  We highlight three families  that arise frequently in research.

\begin{question}
If $H = \mathfrak{b}$ can we identify the largest torus action for the following important families of $\Hess(X,H)$, and are these Hessenberg varieties GKM for that action?  What can we say when $h(i)=i+1$ for all $i<n$?  What can we say when $X$ consists of $m$ equal-sized Jordan blocks?
\end{question}

Since Schubert varieties are $T$-stable, the following special case is also of interest, especially given recent work of Escobar, Precup, and Shareshian that determined which Hessenberg varieties in type $A_{n-1}$ are Schubert varieties \cite{EPS}.

\begin{question}
Which $\Hess(X,H)$ are unions of Schubert varieties?
\end{question}

\subsection{Combinatorial descriptions of $\Hess(X,H)$ that are GKM or $K$-stable}
Theorems~\ref{theorem: conditions for T-stability} and~\ref{theorem: F2 gkm conditions} identified the $T$-stable $\Hess(F_2,H)$ and GKM spaces $\Hess(F_2,H)$.  Their proofs identified a kind of permutation pattern that, if present in a flag in $\Hess(F_2,H)$, determined what kind of torus action applied to $\Hess(F_2,H)$. The conditions we gave may be more fruitfully stated in terms of inclusions of Hessenberg spaces or root subsets, or pattern avoidance within certain permutations, or using some other combinatorial construction.

More generally, we ask if there are always combinatorial conditions that characterize whether $\Hess(X,H)$ admits the action of a particular torus or is GKM.  

\begin{question}
Can the conditions for $\Hess(X,H)$ to have a particular torus action always be stated in terms of permutation patterns, shapes of Young tableaux, or other combinatorial conditions?  Can the conditions for $\Hess(X,H)$ to be GKM be given in combinatorial terms?
\end{question}

More specifically, while Corollaries~\ref{corollary:B nonzero} and~\ref{corollary: trivial conditions to satisfy} and Proposition~\ref{proposition: Hess with full torus action} gave some necessary and sufficient algebraic conditions for $\Hess(X,H)$ to have a full $T$-action, they do not provide an immediate test.  Thus we ask the following.

\begin{question}
Find combinatorial conditions on $X$ and $H$ that characterize all $T$-stable $\Hess(X,H)$.
\end{question}

\subsection{Hessenberg varieties in other Lie types}

This paper primarily addressed the case of Lie type $A_{n-1}$.  Some results extend to all Lie types; for instance, we generalize the proof that $F_1$ is $T$-stable in the next proposition.  This is a key result from \cite{AbeCro16}.

\begin{proposition}\label{proposition: F1 in all types}
Suppose $E_{\alpha} \in \mathfrak{g}$ and $H$ is a Hessenberg space.  Then $\Hess(E_{\alpha}, H)$ is $T$-stable and thus GKM with respect to $T$.
\end{proposition}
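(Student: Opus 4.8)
The plan is to reduce everything to the fact that a root vector is an eigenvector for the conjugation action of $T$, combined with the observation that Hessenberg varieties are insensitive to rescaling the operator. First I would record that for any ${\bf t} \in T$ the adjoint action scales the root vector by the corresponding character, namely ${\bf t}^{-1} E_{\alpha} {\bf t} = \alpha({\bf t})^{-1} E_{\alpha}$, where $\alpha({\bf t}) \in \mathbb{C}^*$ is the value of the character $\alpha$ on ${\bf t}$. This is precisely the defining property of the $\alpha$-weight space $\mathfrak{g}_{\alpha}$ and holds in every Lie type, so unlike the type $A_{n-1}$ argument for $F_1$ it does not require any explicit matrix description.

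Next I would observe that for any nonzero scalar $c \in \mathbb{C}^*$ we have $\Hess(cE_{\alpha}, H) = \Hess(E_{\alpha}, H)$. Indeed, the defining condition $g^{-1}(cE_{\alpha})g = c\,(g^{-1}E_{\alpha}g) \in H$ is equivalent to $g^{-1}E_{\alpha}g \in H$ since $H$ is a linear subspace and therefore closed under scalar multiplication. Combining the two observations yields $\Hess({\bf t}^{-1}E_{\alpha}{\bf t}, H) = \Hess(\alpha({\bf t})^{-1}E_{\alpha}, H) = \Hess(E_{\alpha}, H)$ for every ${\bf t} \in T$.

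Then I would invoke Lemma~\ref{dumbactioncondition}: since ${\bf t}gB \in \Hess(E_{\alpha}, H)$ if and only if $gB \in \Hess({\bf t}^{-1}E_{\alpha}{\bf t}, H)$, and the latter Hessenberg variety equals $\Hess(E_{\alpha}, H)$ by the previous paragraph, we conclude that $gB \in \Hess(E_{\alpha}, H)$ if and only if ${\bf t}gB \in \Hess(E_{\alpha}, H)$. This is exactly $T$-stability. For the GKM conclusion I would appeal to the fact that $G/B$ is a GKM space with respect to $T$ together with Lemma~\ref{lemma: subvariety of GKM with same torus action is GKM}, which says that a $T$-stable subvariety of a GKM space is itself GKM with respect to the same torus; the equivariant formality hypothesis is automatic because, as noted after the definition of GKM space, every Hessenberg variety admits an affine paving and hence has no odd-dimensional cohomology.

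I do not anticipate a genuine obstacle, as the argument is essentially a one-line eigenvector computation once the right two facts are isolated. The only point requiring care is confirming that both the eigenvector identity and the GKM conditions transfer cleanly to arbitrary Lie type, rather than relying on the explicit coordinates available in type $A_{n-1}$; this is exactly where the abstract characterization of Hessenberg spaces as $\mathfrak{b}$-stable subspaces containing $\mathfrak{b}$, and the root-space decomposition of $\mathfrak{g}$, do the work.
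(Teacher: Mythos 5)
Your proposal is correct and follows essentially the same route as the paper: the eigenvector identity $Ad({\bf t})E_{\alpha} = \alpha({\bf t})E_{\alpha}$, the fact that rescaling by $\alpha({\bf t}) \in \mathbb{C}^*$ does not change membership in the linear subspace $H$, and Lemma~\ref{lemma: subvariety of GKM with same torus action is GKM} to pass from $T$-stability to the GKM conclusion. Your explicit citations of Lemma~\ref{dumbactioncondition} and of equivariant formality via the affine paving only make transparent steps the paper leaves implicit.
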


\begin{proof}
Suppose $E_{\alpha} \in \mathfrak{g}$ and ${\bf t} \in T$.  Then $Ad({\bf t})E_{\alpha} = \alpha({\bf t}) E_{\alpha}$ by definition of the adjoint action.  Since $\alpha({\bf t}) \in \mathbb{C}^*$ is a nonzero scalar, we have 
\[Ad(g^{-1}) (Ad({\bf t})E_{\alpha}) \in H \Longleftrightarrow \alpha({\bf t}) Ad (g^{-1})E_{\alpha} \in H \Longleftrightarrow Ad(g^{-1})E_{\alpha} \in H\]
By Lemma~\ref{lemma: subvariety of GKM with same torus action is GKM} the claim holds.
\end{proof}

However, the arguments in this paper, and especially in Theorem~\ref{theorem: main column test}, rely on specific properties of Lie type $A_{n-1}$ and the limited ways that roots can sum to form other roots.  We expect a result like Theorem~\ref{theorem: main column test} to be more complicated in other Lie types, if it could even be extended directly. So we ask:

\begin{question} \label{question: general Lie type}
Which of these results extend to general Lie type?
\end{question}

For example, we conjecture Corollaries~\ref{corollary: last row of H empty} and~\ref{corollary: first column of Xg empty} generalize to all Lie types.

\begin{question}
Suppose $\Delta$ is the set of simple roots for a Lie algebra $\mathfrak{g}$ of rank at least $4$.

\begin{itemize}
\item Let $H_n \subseteq \mathfrak{g}$ be the maximal parabolic subalgebra associated to the root system with simple roots $\Delta \backslash \alpha_n$.  Is $\Hess(X,H_n)$ a $T$-stable Hessenberg variety?
\item Let $H_1 \subseteq \mathfrak{g}$ be the maximal parabolic subalgebra associated to the root system with simple roots $\Delta \backslash \alpha_1$.  Is $\Hess(X,H_1)$ a $T$-stable Hessenberg variety? 
\end{itemize}
\end{question}

\subsection{Limiting behavior of Hessenberg spaces}

We can also consider the limiting behavior of $\Hess(X_n,H_n)$ for various sequences $\{(X_n, H_n)\}_{n \geq 1}$ of pairs of operators and Hessenberg spaces.  For instance, suppose that $X_n$ is the $n \times n$ matrix of form $F_2$.  Then Theorem~\ref{theorem: F2 gkm conditions} says:
\begin{itemize}
    \item When $H_n$ is given by the Hessenberg function $h(1)=h(3)=3$ and $h(i)=i$ for all other $i \leq n$ then the sequence $\{\Hess(X_n,H_n)\}$ stabilizes as a GKM space.
    \item By contrast, when $H_n$ is given by the Hessenberg function $h(i)=n-2$ for all $i \leq n-2$ and $h(i)=i$ otherwise, then the sequence $\{\Hess(X_n,H_n)\}$ stabilizes as a non-GKM space.
\end{itemize}

This is one way to quantify and make precise our intuition that ``most" Hessenberg varieties have certain behavior.  We obtain the following question.

\begin{question}
Suppose that $\{(X_n,H_n)\}_{n \geq 1}$ is a sequence of pairs of $n \times n$ nilpotent operators and Hessenberg spaces. Does the full torus eventually act on $\{\Hess(X_n,H_n)\}$? Which sequences $\{\Hess(X_n,H_n)\}$ stabilize as GKM spaces?  
\end{question}

\subsection{GKM theory of Hessenberg varieties}

Finally, while this paper has focused on the questions of {\it when} Hessenberg varieties have torus actions with respect to which they are GKM, the obvious follow-up question is: {\it what} is the equivariant cohomology of the GKM Hessenberg spaces? Abe and Crooks have answered this question for $\Hess(F_1,H)$ in all Lie types \cite{AbeCro16}.

\begin{question}
We have identified many Hessenberg varieties that are GKM spaces.  What is the equivariant cohomology of these $\Hess(X,H)$?  Is there a  combinatorial formulation along the lines of classical Schubert calculus, puzzles, and so on?
\end{question}

\bibliography{main}
\bibliographystyle{plain}

\end{document}